\documentclass[leqno]{article}

\usepackage[frenchb,english]{babel}
\usepackage[utf8]{inputenc}
\usepackage{amsmath}
\usepackage{amssymb}
\usepackage{amsfonts}
\usepackage{enumerate}
\usepackage{vmargin}
\usepackage[all]{xy}
\usepackage{mathrsfs}
\usepackage{mathtools}
\usepackage{lmodern}
\usepackage{slashed}
\usepackage[colorlinks=true,linkcolor=blue,pagebackref=true]{hyperref}%
\setmarginsrb{3cm}{3cm}{3.5cm}{3cm}{0cm}{0cm}{1.5cm}{3cm}
\usepackage{comment}

\usepackage{accents}

\newlength{\dhatheight}

\newcommand{\A}{\ensuremath{\mathcal{A}}}

\newcommand{\B}{\mathrm{B}} 
\let\H\relax 
\newcommand{\H}{\mathrm{H}}
\newcommand{\I}{\mathrm{I}} 
\newcommand{\K}{\ensuremath{\mathbb{K}}}

\let\L\relax 
\newcommand{\L}{\mathrm{L}}
\let\O\relax 
\newcommand{\O}{\ensuremath{\mathbb{O}}}

\newcommand{\IV}{\mathrm{IV}} 
\newcommand{\VI}{\mathrm{VI}} 

\newcommand{\M}{\mathrm{M}}

\newcommand{\absco}{\mathrm{absco}} 

\let\cal\relax
\newcommand{\cal}{\mathcal}

\newcommand{\Zc}{\mathrm{Z}}
\newcommand{\R}{\ensuremath{\mathbb{R}}}

\newcommand{\Id}{\mathrm{Id}}
\newcommand{\Sym}{\mathrm{Sym}} 
\newcommand{\la}{\langle}
\newcommand{\ra}{\rangle}

\renewcommand{\leq}{\ensuremath{\leqslant}}
\renewcommand{\geq}{\ensuremath{\geqslant}}
\newcommand{\qed}{\hfill \vrule height6pt  width6pt depth0pt}
\newcommand{\bnorm}[1]{ \big\| #1  \big\|}

\newcommand{\norm}[1]{\left\Vert#1\right\Vert}

\newcommand{\co}{\colon}

\newcommand{\ot}{\otimes}
\newcommand{\ovl}{\overline}

\let\i\relax 
\newcommand{\i}{\mathrm{i}}
\let\j\relax 
\newcommand{\j}{\mathrm{j}} 
\let\k\relax 
\newcommand{\k}{\mathrm{k}} 

\newcommand{\ov}{\overset}
\newcommand{\sa}{\mathrm{sa}}
\newcommand{\JW}{\mathrm{JW}}

\newcommand{\JB}{\mathrm{JB}}

\newcommand{\JBW}{\mathrm{JBW}}

\renewcommand{\d}{\mathop{}\mathopen{}\mathrm{d}} 
\renewcommand{\d}{\mathop{}\mathopen{}\mathrm{d}}

\DeclareMathOperator{\Span}{span} 
\DeclareMathOperator{\tr}{Tr} 
\DeclareMathOperator{\Tr}{Tr} 
\DeclareMathOperator{\Ran}{Ran} 
\DeclareMathOperator{\dom}{dom} 
\let\Re\relax 
\DeclareMathOperator{\Re}{Re} 
\let\Im\relax 
\DeclareMathOperator{\Im}{Im} 
\DeclareMathOperator{\card}{card} 

\newtheorem{thm}{Theorem}[section]
\newtheorem{defi}[thm]{Definition}
\newtheorem{prop}[thm]{Proposition}

\newtheorem{quest}[thm]{Question}

\newtheorem{cor}[thm]{Corollary}

\newtheorem{remark}[thm]{Remark}
\newtheorem{example}[thm]{Example}

\newenvironment{proof}[1][]{\noindent {\it Proof #1} : }{\hbox{~}\qed
\smallskip
}

\usepackage{tocloft}
\numberwithin{equation}{section}
\usepackage[nottoc,notlot,notlof]{tocbibind}

\let\OLDthebibliography\thebibliography
\renewcommand\thebibliography[1]{
  \OLDthebibliography{#1}
  \setlength{\parskip}{0pt}
  \setlength{\itemsep}{0pt plus 0.3ex}
}

\newcommand\reallywidehat[1]{\arraycolsep=0pt\relax%
\begin{array}{c}
\stretchto{
  \scaleto{
    \scalerel*[\widthof{\ensuremath{#1}}]{\kern-.5pt\bigwedge\kern-.5pt}
    {\rule[-\textheight/2]{1ex}{\textheight}} 
  }{\textheight} %
}{0.5ex}\\           
#1\\                 
\rule{-1ex}{0ex}
\end{array}
}

\begin{document}
\selectlanguage{english}
\title{\bfseries{Spectral versus interpolation norms in tracial nonassociative $\L^p$-spaces}}
\date{}
\author{\bfseries{C\'edric Arhancet and Lei Li}}%
\maketitle


\begin{abstract}
We investigate the metric structure of nonassociative $\mathrm{L}^p$-spaces associated with tracial $\mathrm{JW}^*$-algebras. While noncommutative $\mathrm{L}^p$-spaces arising from von Neumann algebras enjoy a unique natural norm, the situation in the Jordan setting is more subtle. We compare two canonical definitions: the interpolation norm, arising from the complex method between the algebra and its predual, and the spectral norm, defined with the trace. We show that these two norms are equivalent but generally not isometric for $p \neq 2$, even in the associative case of nonabelian von Neumann algebras when viewed through the Jordan product, thereby answering an open question raised by the first author in a previous paper. We further analyze the geometry of these spaces in concrete examples such  as complex spin factors or the complexified Albert algebra. Finally, we discuss the relevance of these results to generalized probabilistic theories (GPTs), where Jordan structures arise naturally, and explain why $\JBW$-algebras and their preduals provide a natural framework for such models.
\end{abstract}
%
%
%
%
%
%

\makeatletter
 \renewcommand{\@makefntext}[1]{#1}
 \makeatother
 \footnotetext{
 2020 {\it Mathematics subject classification:}
 46L51, 94A40, 46L07, 81P45. 
\\
{\it Key words}: noncommutative $\L^p$-spaces, nonassociative $\L^p$-spaces, $\JBW^*$-algebras, interpolation.}

{
  \hypersetup{linkcolor=blue}
 \tableofcontents
}

\section{Introduction}
\label{sec:Introduction}

Noncommutative $\L^p$-spaces associated with von Neumann algebras have become central objects in modern functional analysis, with deep connections to quantum information theory, harmonic analysis, and operator space theory (see e.g.~\cite{Arh25}, \cite{Arh26}, \cite{PiX03}, \cite{Pis98}). These spaces extend the classical Lebesgue integration theory to the setting of associative algebras of operators. 

However, the foundational works \cite{Jor33} and \cite{JNW34} of Jordan, von Neumann, and Wigner suggest that the most general framework for quantum mechanics relies not on associative algebras, but on \textit{Jordan algebras}. Indeed, in his seminal paper \cite[(4)]{Jor33}, Jordan introduced a novel algebraic framework designed to capture the statistical properties of quantum mechanical observables. Recognizing that the standard associative product of two Hermitian matrices (representing physical observables) is not necessarily Hermitian, Jordan proposed shifting the focus to the symmetrized product:
\begin{equation}
\label{Jordan-product}
x \circ y 
\ov{\mathrm{def}}{=} \frac{1}{2}(xy+yx).
\end{equation}
This construction ensures that the resulting structure remains within the space of observables. Jordan demonstrated in \cite[(12) p.~288]{JNW34} that while this product is nonassociative, it satisfies a weak form of associativity, now known as the Jordan identity:
\begin{equation}
\label{Jordan-identity}
x \circ (y \circ x^2) 
=(x \circ y) \circ x^2.
\end{equation}
This work laid the mathematical groundwork for what are now called Jordan algebras, providing a rigorous alternative to the standard associative operator theory. Building upon this initial discovery, the collaborative work \cite{JNW34} by Jordan, von Neumann, and Wigner sought to determine whether this algebraic approach could offer a more general foundation for quantum mechanics than the standard Hilbert space formalism. Their exhaustive classification of formally real\footnote{\thefootnote. A Jordan algebra $\A$ is called formally real if $a_1 \circ a_1 + \cdots +a_n \circ a_n =0$ implies $a_1 = \cdots = a_n = 0$ for any $a_1, \ldots, a_n \in \A$.} Jordan algebras of finite dimension revealed that almost all such structures are representable by classical matrices, with the notable exception of the exceptional Jordan algebra, thereby delimiting the algebraic boundaries of quantum theory.

Nowadays, these structures allow for the description of some Generalized Probabilistic Theories (GPTs) \cite{Pla23} \cite{SAH25}, including real quantum mechanics and exceptional systems based on octonions, which are currently attracting renewed interest in the study of quantum foundations.

In finite dimension, formally real Jordan algebras coincide with finite-dimensional $\JBW$-algebras. Thus, the Jordan algebras arising in the work of Jordan, von Neumann, and Wigner already fit into the broader framework of $\JBW$-algebras and, upon complexification, into that of $\JBW^*$-algebras. The latter class, introduced by Edwards in \cite{Edw80}, also contains the exceptional $\JBW^*$-factor $\mathrm{H}_3(\O_\mathbb{C})$ consisting of Hermitian $3 \times 3$ matrices over the algebra $\O_\mathbb{C}$ of complex octonions. Recall that a $\JW^*$-algebra is a weak*-closed Jordan-$*$-subalgebra of a von Neumann algebra. It is a particular case of a $\JBW^*$-algebra. Observe that any von Neumann algebra $\cal{M}$ becomes a $\JW^*$-algebra when endowed with the Jordan product \eqref{Jordan-product}. Consequently, every von Neumann algebra provides an example of a $\JW^*$-algebra and therefore of a $\JBW^*$-algebra.

It is known that the selfadjoint parts of $\JBW^*$-algebras and $\JW^*$-algebras coincide with $\JBW$-algebras and $\JW$-algebras, respectively. These are real linear spaces of selfadjoint operators that are weak* closed and stable under the Jordan product $\circ$. The theory of $\JW$-algebras was introduced by Topping in \cite{Top65} and further developed by several authors. For further background on these structures, we refer to the books \cite{AlS03}, \cite{ARU97}, \cite{CGRP14}, \cite{CGRP18}, \cite{HOS84}, as well as to the important paper \cite{Sto66} and the references therein. Finally, in this context, we can define the notion of trace on a $\JBW^*$-algebra which is a linear functional $\tau$ on $\cal{M}$ satisfying the equality
\begin{equation}
\label{associative-trace}
\tau (x \circ(y \circ z))
=\tau((x \circ y) \circ z), \quad x,y,z \in \cal{M}.
\end{equation}

In a previous work \cite{Arh24a}, we initiated the study of nonassociative $\L^p$-spaces associated with general $\JBW^*$-algebras. A fundamental question left open was the precise relationship between the analytical and spectral definitions of these spaces in the case where a normal finite faithful trace is available. Indeed, in the tracial setting, two approaches naturally present themselves. The first approach consists of defining the Banach space $\L^{p,A}(\mathcal{M})$ by complex interpolation \cite{BeL76,KPS82} between a $\JBW^*$-algebra $\cal{M}$, equipped with the operator norm, its product $\circ$ and a (normal finite faithful) trace $\tau$, and its predual $\cal{M}_*$, via the embedding $i \co \cal{M} \to \cal{M}_*$, $x \mapsto (y \mapsto \tau(x \circ y ))$ by letting
\begin{equation}
\label{Def-Lp-state-JBW-intro}
\L^{p,A}(\cal{M})
\ov{\mathrm{def}}{=} (\cal{M},\cal{M}_*)_{\frac{1}{p}},
\end{equation}
for any $1 <p < \infty$. We let $\L^\infty(\cal{M}) \ov{\mathrm{def}}{=} \cal{M}$. We also refer to \cite{Arh24a} for a generalization to $\JBW^*$-algebras equipped with normal faithful states and a description of the links with the spaces of \cite{Ioc86} associated with $\JBW$-algebras and to \cite[Section 8]{Arh24b} for a generalization of this approach to $\JBW^*$-triples equipped with a weak* continuous linear functional such that the support tripotent is complete. 

Note that in the particular case where the $\JBW^*$-algebra $\cal{M}$ is a von Neumann algebra, equipped with the Jordan product \eqref{Jordan-product} and a normal finite faithful trace $\tau$, the previous embedding is precisely $i \co \cal{M} \to \cal{M}_*$, $x \mapsto (y \mapsto \tau(x y))$. This embedding is used in the paper \cite{Kos84}. Consequently, the Banach space $\L^{p,A}(\cal{M})$ coincides with the Kosaki noncommutative $\L^p$-space $\L^{p,K}(\cal{M})$ introduced in \cite{Kos84}, i.e.
\begin{equation}
\label{arh=Dix-if-trace}
\L^{p,A}(\cal{M})
= \L^{p,K}(\cal{M}).
\end{equation}
It is well-known \cite{PiX03} that the Kosaki noncommutative $\L^p$-space $\L^{p,K}(\cal{M})$ is isometrically isomorphic to the Dixmier noncommutative $\L^p$-space $\L^{p,D}(\cal{M})$. This means that we have an isometry
\begin{equation}
\label{Kosaki-vs-Dixmier}
\L^{p,K}(\cal{M})
\cong \L^{p,D}(\cal{M}).
\end{equation}
This latter space is introduced in \cite{Dix53} and defined as the completion of $\cal{M}$ for the norm
\begin{equation}
\label{Dixmier-NC-spaces}
\norm{x}_{\L^{p,D}(\cal{M})} 
\ov{\mathrm{def}}{=} \tau\big(|x|^p\big)^{\frac{1}{p}}, \quad x \in \cal{M},
\end{equation}
where we use the modulus $|x| \ov{\mathrm{def}}{=} (x^*x)^{\frac{1}{2}}$.

The second approach for defining nonassociative $\L^p$-spaces associated with a $\JBW^*$-algebra $\cal{M}$ endowed with a normal finite faithful trace $\tau$, is inspired by the previous approach of Dixmier and consists of introducing the notation
\begin{equation}
\label{norm-LpNA}
\norm{x}_{\L^p(\cal{M})}
\ov{\mathrm{def}}{=} \big(\tau \big[(x^* \circ x)^{\frac{p}{2}}\big]\big)^{\frac{1}{p}}, \quad x \in \cal{M},
\end{equation}
using the trace of the $p$-th power of the Jordan modulus $(x^* \circ x)^{\frac{1}{2}}$. It is established in \cite[Theorem 3.15]{Arh24a} that $\norm{\cdot}_{\L^{p,A}(\cal{M})}$ and $\norm{\cdot}_{\L^p(\cal{M})}$ coincide on the \textit{selfadjoint} part $\cal{M}_\sa$ of $\cal{M}$. In this paper, if $\cal{M}$ is a $\JW^*$-algebra we show that this formula defines a norm on $\cal{M}$ if $1 \leq p < \infty$, answering the first part of the open question \cite[Question 5.4]{Arh24a}. Thus, we can introduce the completion $\L^p(\cal{M})$ of $\cal{M}$.

In this paper, we show in Theorem \ref{Th-comparison-interpolation} that the \textit{isometric} coincidence \eqref{Kosaki-vs-Dixmier} breaks down in the general nonassociative setting, i.e.~the Banach spaces $\L^p(\cal{M})$ and $\L^{p,A}(\cal{M})$ are not isometric, answering the second part of \cite[Question 5.4]{Arh24a}. Indeed, by investigating the Jordan structure underlying von Neumann algebras, we prove that the spectral norm $\norm{\cdot}_{\L^p(\cal{M})}$ defined in \eqref{norm-LpNA} and the interpolation norm $\norm{\cdot}_{\L^{p,A}(\cal{M})}$ are \textit{equivalent but distinct}: we have a generally non-isometric isomorphism
\begin{equation}
\label{isomorphism-intro}
\L^p(\cal{M}) 
\approx \L^{p,A}(\cal{M}),
\end{equation}
for any $1 < p  < \infty$.

Our main result (Theorem \ref{Th-comparison-interpolation}) establishes that the spectral norm dominates the interpolation norm, with an optimal constant that we compute. This phenomenon highlights a fundamental geometric rigidity cost imposed by the symmetrized Jordan product \eqref{Jordan-product}.

We illustrate these phenomena through detailed computations on the building blocks of nonassociative algebra theory. In particular, we describe nonassociative $\L^p$-spaces of spin systems. We also study the $\L^p$-spaces associated with the algebra $\mathrm{H}_3(\mathbb{O}_{\mathbb{C}})$, providing the necessary analytical framework for this structure which cannot be embedded as a weak* closed Jordan-$*$-subalgebra into any $\JW^*$-algebra.

Our motivation is twofold. First, we aim to gain a better understanding of contractively complemented subspaces of noncommutative $\L^p$-spaces arising as the ranges of contractive projections, where the interpolation spaces $\L^{p,A}(\cal{M})$ defined in \eqref{Def-Lp-state-JBW-intro} appear. It is proved in an unpublished extension of the preprint \cite{Arh26b} that a complemented subspace of a Dixmier noncommutative $\L^p$-space associated with a finite von Neumann algebra arising as the range of a \textit{positive} contractive projection is isometrically isomorphic to a nonassociative $\L^{p}$-space $\L^{p,A}(\cal{M})$. We refer to \cite{ArF92}, \cite{ArR24}, \cite{Arh24a}, \cite{Arh24b}, \cite{Arh26b}, \cite{Bof25} and \cite{LRR09} for the state of the art on this topic.

Second, the Banach spaces $\L^p(\cal{M})$ and $\L^{p,A}(\cal{M})$ naturally fit within the framework of generalized probabilistic theories. While noncommutative $\L^p$-spaces are foundational to standard Quantum Information Theory, nonassociative $\L^p$-spaces provide the necessary analytical framework for some \textit{suitable} generalized probabilistic theories, encompassing complex Quantum Mechanics, exceptional systems based on the Albert algebra and  spin-factor models. Specifically, our results provide the rigorous mathematical grounds to define and study entropies in systems where associativity is lost. For further information on this theory, we refer the reader to \cite{BGW15}, \cite{Bar07}, \cite{BaW11}, \cite{BaW14}, \cite{BGW20}, \cite{BUW23}, \cite{CDP11a}, \cite{CDP11b}, \cite{CFR01}, \cite{Har01}, \cite{HaW12}, \cite{JaH14}, \cite{LRTF21}, \cite{LPW18}, \cite{Mul21}, \cite{Nie20}, \cite{Pla23}, \cite{SAH25}, \cite{Sha21}, \cite{Wil19}, \cite{Wil25}, \cite{WW21} and the references therein.

We also refer to \cite{CGRP18}, \cite{EPV25b}, \cite{Isi19}, \cite{McC78}, \cite{McC04}, \cite{Upm85} and \cite{Upm87} for a description of some applications of Jordan algebras to complex analysis in finite and infinite dimensions, harmonic analysis on homogeneous spaces, operator theory, projective geometry and foundations of quantum mechanics.

\paragraph{Structure of the paper}
In Section \ref{sec-preliminaries}, we recall basic notions from Jordan algebra theory and complex interpolation theory.  In Section \ref{sec-norm}, we prove that the formula \eqref{norm-LpNA} defines a norm on every $\JW^*$-algebra $\cal{M}$ equipped with a normal finite faithful trace $\tau$. In Section \ref{sec-VNA}, we establish our main isomorphism result in the associative setting of von Neumann algebras. Section \ref{sec-isomorphism} is devoted to extending this result to some $\JW^*$-algebras. In Section \ref{sec-duality}, we show that the duality relation $(\L^1)^*=\L^\infty$ fails for the spaces $\L^p(\cal{M})$ defined via \eqref{norm-LpNA}. In Section \ref{sec-Complex-spin-factors}, we investigate complex spin factors. In Section \ref{sec-Generalized}, we explain how our results fit naturally into the framework of generalized probabilistic theories. Finally, in Section \ref{sec-open-question}, we state some open questions.



\section{Preliminaries}
\label{sec-preliminaries}

The purpose of this section is twofold: we fix terminology from Jordan operator algebra theory, and we isolate the few structural facts that will be repeatedly used in the paper.

\paragraph{Jordan algebras} A Jordan algebra $\A$ over a field $\K$ is a vector space $\A$ over $\K$ equipped with a (not necessarily associative) commutative bilinear product $\circ$ that satisfies the Jordan identity $x \circ (x^2 \circ y) =x^2\circ (x \circ y)$ of \eqref{Jordan-identity} for any $x,y \in \A$, see e.g.~\cite[p.~162]{CGRP14} or \cite[Definition 1.1 p.~3]{AlS03}. This means that the multiplication operators by $x$ and $x^2$ commute. 


Following \cite[Definition 1.5 p.~5]{AlS03} and \cite[3.1.4 p.~76]{HOS84}, a $\JB$-algebra is a Jordan algebra over the scalar field $\R$ equipped with a complete norm satisfying the properties 
\begin{equation}
\label{def-JB}
\norm{x \circ y} \leq \norm{x} \norm{y}, \quad \|x^2\|=\norm{x}^2 
\quad \text{and} \quad \|x^2\| \leq \|x^2+y^2\|
\end{equation}
for any $x,y \in \A$. A $\JBW$-algebra is a $\JB$-algebra which is a dual Banach space \cite[p.~111]{HOS84}. In this case, the predual is unique.

A $\JB^*$-algebra \cite[p.~91]{HOS84} \cite[Definition 3.3.1 p.~345]{CGRP14} is a complex Banach space $\cal{M}$ which is a complex Jordan algebra equipped with an involution satisfying 
\begin{equation}
\label{def-JBstar}
\norm{x \circ y} \leq \norm{x} \norm{y},\quad  \norm{x^*}=\norm{x} 
\quad \text{and} \quad \norm{\{x,x^*,x\}}=\norm{x}^3
\end{equation}
for any $x,y \in \cal{M}$, where we use the Jordan triple product \cite[p.~25]{HOS84}\footnote{\thefootnote. We warn the reader that another definition is often used in some papers for the triple product:
\begin{equation}
\label{triple product} 
\{x,y,z\} 
 \ov{\mathrm{def}}{=} (x \circ y^*) \circ z + (y^* \circ z) \circ x - (x \circ z) \circ y^*.
\end{equation}}  
$$
\{x,y,z\}
\ov{\mathrm{def}}{=} (x \circ y) \circ z+(y \circ z) \circ x-(x \circ z) \circ y.
$$ 
As in \cite[p.~4]{CGRP18}, we say that a $\JB^*$-algebra which is a dual Banach space is a $\JBW^*$-algebra. By \cite[Lemma 4.1.7 p.~96]{HOS84}, any $\JBW$-algebra is unital.

An element $x$ in a $\JB^*$-algebra is called selfadjoint if $x^* = x$. According to \cite[Corollary 5.1.29 p.~9]{CGRP18}, the set $\cal{M}_\sa$ of selfadjoint elements of a $\JBW^*$-algebra $\cal{M}$ has a canonical structure of $\JBW$-algebra. Conversely, if $\A$ is a $\JBW$-algebra then by \cite[Corollary 5.1.41 p.~15]{CGRP18} there exists a unique $\JBW^*$-algebra $\cal{M}$ such that $\A$ is the selfadjoint part $\cal{M}_\sa$ of $\cal{M}$. 


An element $x$ of a $\JB^*$-algebra $\cal{M}$ is said to be positive \cite[p.~9]{CGRP18} \cite[p.~7]{AlS03} if $x \in \cal{M}_\sa$ and if we can write $x = y \circ y$ for some element $y \in \cal{M}_\sa$.

%


\paragraph{Centers and factors} Two elements $x$ and $y$ of a Jordan algebra $\A$ are said to operator commute \cite[p.~44]{HOS84} 
if for any $z \in \A$ we have $(x \circ z) \circ y= x \circ (z \circ y)$. The center $\Zc(\A)$ of $\A$ \cite[p.~45]{HOS84}  is the set of all elements of $\A$ which operator commute with all elements of $\A$. We say that an element $x \in \A$ is central if it belongs to the center. By \cite[Lemma 2.5.3 p.~45]{HOS84} and \cite[Proposition 2.36 p.~56]{AlS03}, the center $\Zc(\A)$ is an associative $\JBW$-subalgebra of $\A$. 

Following \cite[p.~115]{HOS84} and \cite[p.~348]{CGRP18}, if the center of a non-zero $\JBW$-algebra $\A$ only consists of scalar multiples of the identity, we say that $\A$ is a $\JBW$-factor. In this case, we say that the associated $\JBW^*$-algebra $\cal{M}$, such that $\cal{M}_\sa=\A$, is a $\JBW^*$-factor. We refer to \cite[Theorem 6.1.40 p.~362]{CGRP18} for a classification of $\JBW$-factors and $\JBW^*$-factors.

\paragraph{$\JW$-algebras} Recall that a (concrete) $\JW$-algebra \cite[p.~95]{HOS84} \cite[Definition 2.70 p.~70]{AlS03} 
 is a weak* closed Jordan subalgebra $\A$ of the selfadjoint part $\B(H)_\sa$ of the space $\B(H)$ of bounded operators on some complex Hilbert space $H$, that is a real linear space of selfadjoint operators which is closed for the weak* topology and closed under the Jordan product $\circ$. Note that a $\JW$-algebra is a $\JBW$-algebra by \cite[p.~95]{HOS84}. In this situation, by \cite[Proposition 1.49 p.~28]{AlS03}, two elements $x$ and $y$ of a $\JW$-algebra $\A$ operator commute if and only if $x$ and $y$ commute in $\B(H)$. A $\JW$-algebra which is a $\JBW$-factor is called a $\JW$-factor.

\paragraph{Real spin factors} Real spin factors \cite{Top65} are among the basic examples in Jordan theory. They will later serve as test cases where computations can be carried out explicitly.

\begin{example} \normalfont
\label{spin-factor}
Let $\cal{H}$ be a real Hilbert space, and let $\R 1$ denote a one-dimensional real Hilbert space with unit vector 1. Let $\A \ov{\mathrm{def}}{=} \cal{H} \oplus \R 1$ and consider the product $\circ$ on $\A$ defined by
\begin{equation}
\label{product-spin-factor}
(a+\lambda 1)  \circ  (b  + \mu 1)  
\ov{\mathrm{def}}{=} \mu a  + \lambda b  +  (\la a ,  b\ra_{\cal{H}}  +  \lambda \mu)1, \quad a,b \in \cal{H}, \lambda,\mu \in \R
\end{equation}
and the norm $\norm{a+\lambda 1}_\A  \ov{\mathrm{def}}{=} \norm{a}_\cal{H}  + |\lambda|$. According to \cite[Corollary 30 p.~42]{Top65}, $\A$ is a $\JBW$-algebra. If the dimension of the real Hilbert space $\cal{H}$ is greater than or equal to $2$, $\A$ is a $\JBW$-factor by \cite[Proposition 3.37 p.~92]{AlS03}. It is called a (abstract) real spin factor and admits a concrete representation as a $\JW$-algebra by \cite[Theorem 4.1 p.~103]{AlS03}. These algebras were introduced by Topping in \cite{Top65} and \cite{Top66}. He showed in \cite[Theorem 3 p.~1061]{Top66} (see also \cite[Proposition 6.1.5 p.~137]{HOS84}) that two real spin factors are isomorphic if and only if their real Hilbert space dimensions are equal.

We can give a concrete definition. Consider a spin system, that is a set $\cal{S}$ of symmetries (i.e.~selfadjoint unitaries) $\not=\pm\Id$ acting on some complex Hilbert space $H$ which anticommute (i.e.~$st=-ts$ for any distinct $t,s \in \cal{S}$). Then according to \cite[Theorem 1 p.~1059]{Top66}, the subspace $\ovl{\Span_\R\cal{S}} \oplus \R \Id_H$ of the space $\B(H)$ is a $\JW$-algebra which is isomorphic as a $\JBW$-algebra to some real spin factor. 


Conversely, by \cite[Theorem 2 p.~1061]{Top66} any real spin factor is isomorphic to some $\JW$-algebra associated with some spin system. We refer also to \cite[p.~175]{Pis03} for additional information on spin systems.
\end{example}

\paragraph{Projections} If $p$ is a projection (i.e.~$p \circ p=p$) of a $\JBW$-algebra $\A$, the smallest central projection $q$ such that $q \geq p$ is called the central cover of $p$ and denoted by $c(p)$, see \cite[Definition 2.38 p.~56]{AlS03}. We say that a projection $p$ of a $\JBW$-algebra $\A$ is abelian if the algebra $\{p,\A,p\}$ is associative \cite[p.~122]{HOS84} \cite[Definition 3.14 p.~85]{AlS03}. Two projections $p,q \in \A$ are said to be orthogonal if $p \circ q=0$ \cite[p.~45]{AlS03}. An element $p$ of a $\JBW^*$-algebra is said to be a projection if $p^* = p$ and $p \circ p = p$.


\paragraph{$\JW^*$-algebras} We define a $\JW^*$-algebra as a complex subspace of the space $\B(H)$ which is closed for the weak* topology and closed under the Jordan product $\circ$ and the involution, for some complex Hilbert space $H$.  
A $\JW^*$-algebra is a $\JBW^*$-algebra. The selfadjoint part of a $\JW^*$-algebra is a $\JW$-algebra. Conversely, if $\A$ is a $\JW$-algebra included in $\B(H)$ then the complexification $\A_\mathbb{C}=\A+\i \A$ is a $\JW^*$-algebra included in $\B(H)$. 


\paragraph{Traces} A trace on a $\JBW$-algebra $\A$ is a function $\tau$ defined on the subset $\A_+$ of positive
elements of $\A$ with values in $[0,+\infty]$ satisfying the following conditions:
\begin{enumerate}
\item $\tau(x+y)=\tau(x)+\tau(y)$ for any $x, y \in \A_+$,
\item $\tau(\lambda x)=\lambda\tau(x)$ for any $x \in \A_+$ and any $\lambda \geq 0$, where $0.(+\infty)=0$,
\item $\tau(s \circ x \circ s)=\tau(x)$ for any $x \in \A_+$ and any arbitrary symmetry $s$ of $\A$.
\end{enumerate}
Recall that a symmetry is an element $s$ such that $s \circ s=1$. The trace $\tau$ is said to be faithful if $\tau(x) > 0$ for all non-zero $x \in \A_+$, finite if $\tau(1) < + \infty$, semifinite if given any non-zero $x \in \A_+$ there is a non-zero $y \in \A_+$ such that $y \leq x$ with $\tau(y) < +\infty$. The trace $\tau$ is normal if for every increasing net $(x_i)$ of positive elements such that $x_i \to x$ where $x \in \A_+$, we have $\tau(x_i) \to \tau(x)$. We refer to \cite{AyA85}, \cite{Ayu82}, \cite{Ayu92}, \cite{Kin83} and \cite{PeS82} for more information on traces on $\JBW$-algebras.

Every finite trace on a $\JBW$-algebra $\A$ can be extended by linearity to a linear functional on $\A$. Thus a finite trace on a $\JBW$-algebra $\A$ can be seen as a positive linear functional $\tau$ satisfying the condition $\tau(s \circ x \circ s)=\tau(x)$ for all $x \in \A$ and all symmetries $s \in \A$. By \cite[Lemma 5.18 p.~147]{AlS03}, it is known that the last condition is equivalent to the formula 
\begin{equation}
\label{Def-trace}
\tau (x \circ (y \circ z)) 
= \tau((x \circ y) \circ z), \quad x,y,z \in \A.
\end{equation}
By complexification with \cite[Corollary 5.1.41 p.~15]{CGRP18}, we obtain a normal faithful positive linear functional on the associated $\JBW^*$-algebra $\cal{M}$ satisfying 
\begin{equation}
\label{trace}
\tau (x \circ(y \circ z))
=\tau((x \circ y) \circ z), \quad x,y,z \in \cal{M}.
\end{equation}
We say that such a map is a normal finite faithful trace on $\cal{M}$.







\begin{example} \normalfont
\label{ex-trace-spin}
Consider a real spin factor $\A = \cal{H} \oplus \R 1$ as in Example \ref{spin-factor}. By \cite[Lemma 5.21 p.~149]{AlS03} (see also \cite[Proposition 6.1.7 p.~137]{HOS84} and \cite{Top66}), there exists a unique tracial state $\tau$ on $\A$. Moreover, the same reference shows that $\tau$ is normal, faithful and defined by $\tau(1)=1$ and $\tau(a)=0$ for any $a \in \cal{H}$, i.e.
\begin{equation}
\label{trace-spin}
\tau(a+\lambda 1)=\lambda, \quad a \in \cal{H}, \lambda \in \R.
\end{equation}
\end{example}

The connection between traces on von Neumann algebras and traces on the associated $\JW^*$-algebras is clarified by \cite[Remark p.~149]{AlS03}, which says that both notions agree.

\section{Constructing the nonassociative $\L^p$-spaces of a $\JW^*$-algebra with a trace}
\label{sec-norm}

Consider some $\JW^*$-algebra $\cal{M}$ equipped with a normal finite faithful trace $\tau$. Assume that $1 \leq p < \infty$. In this section, we establish that the formula $\norm{x}_{\L^p(\cal{M})}
\ov{\mathrm{def}}{=} \big(\tau \big[(x^* \circ x)^{\frac{p}{2}}\big]\big)^{\frac{1}{p}}$ (where $x \in \cal{M}$) of \eqref{norm-LpNA} defines a norm on $\cal{M}$. 

\begin{prop} 
\label{prop:norm-homo}
Let $\cal{M}$ be a $\JBW^*$-algebra equipped with a normal finite faithful trace $\tau$. Suppose that $1 \leq p < \infty$. The expression of \eqref{norm-LpNA} is absolutely homogeneous and point-separating.
\end{prop}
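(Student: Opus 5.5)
First, the expression is well defined. For $x \in \cal{M}$ the element $x^* \circ x$ is selfadjoint; I will check it is positive. In a $\JB^*$-algebra, $x^* \circ x = \tfrac12\{x,x^*,x\}$-type identities give positivity, but the simplest route is to write $x = a + \i b$ with $a,b \in \cal{M}_\sa$. Then
$$x^* \circ x = (a - \i b)\circ(a+\i b) = a^2 + b^2 + \i(a\circ b - b \circ a) = a^2+b^2,$$
using commutativity of $\circ$. Since $a^2$ and $b^2$ are positive and the positive cone of the $\JBW$-algebra $\cal{M}_\sa$ is convex, $x^*\circ x$ is positive. By continuous functional calculus in the associative $\JB$-subalgebra generated by $x^*\circ x$ and $1$, the element $(x^*\circ x)^{\frac p2}$ is therefore a well-defined positive element. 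Since $\tau$ is positive, $\tau[(x^*\circ x)^{p/2}] \geq 0$ and its $p$-th root makes sense.

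Absolute homogeneity comes next. For $\lambda \in \mathbb{C}$, sesquilinearity of $(x,y)\mapsto x^*\circ y$ gives $(\lambda x)^*\circ(\lambda x) = |\lambda|^2\, x^*\circ x$. Functional calculus on the positive element $h = x^*\circ x$ then gives $(|\lambda|^2 h)^{p/2} = |\lambda|^p h^{p/2}$: the function $t\mapsto (ct)^{p/2}$ equals $c^{p/2}t^{p/2}$ on $[0,\infty)$, and functional calculus is a homomorphism of the associative subalgebra $C(\mathrm{Sp}(h))$. By linearity of $\tau$, $\norm{\lambda x}_{\L^p(\cal{M})} = |\lambda|\,\norm{x}_{\L^p(\cal{M})}$, with the case $\lambda=0$ trivial.

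Point separation is where faithfulness enters. Suppose $\tau[(x^*\circ x)^{p/2}] = 0$. The element $h^{p/2}$ is positive and $\tau$ is faithful, so $h^{p/2}=0$. Applying functional calculus with $t\mapsto t^{2/p}$ on $\mathrm{Sp}(h^{p/2})$ recovers $h = 0$, since $t \mapsto t^{p/2}$ is injective on $[0,\infty)$. Then $a^2+b^2=0$ with $a,b$ selfadjoint. In a $\JB$-algebra, $0 \leq a^2 \leq a^2+b^2 = 0$ forces $a^2=0$; alternatively, the axiom $\|a^2\| \leq \|a^2+b^2\|$ of \eqref{def-JB} gives this directly. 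Then $\|a\|^2=\|a^2\|=0$, so $a=0$, and symmetrically $b=0$, hence $x=0$.

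The only point requiring care is the identity $x^*\circ x = a^2+b^2$ and the resulting positivity. The rest is routine functional calculus in the associative subalgebra generated by a single selfadjoint element, together with faithfulness of $\tau$. Nothing here uses the trace property \eqref{trace} or the triangle inequality, which is consistent with the statement covering only homogeneity and separation for general $\JBW^*$-algebras.
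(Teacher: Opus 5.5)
Your proof is correct. Its skeleton matches the paper's: positivity of $x^*\circ x$, homogeneity via functional calculus, faithfulness of $\tau$, then $x^*\circ x=0\Rightarrow x=0$. The difference is in how you obtain the two nontrivial facts.

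The paper simply cites them from the literature: positivity of $x\circ x^*$ in a $\JB^*$-algebra, and the implication $x\circ x^*=0\Rightarrow x=0$. You derive both from the Cartesian decomposition $x=a+\i b$. The identity $x^*\circ x=a^2+b^2$ does hold, by commutativity of $\circ$ and conjugate-linearity of the involution. You combine it with two facts about the $\JB$-algebra $\cal{M}_\sa$: convexity of its positive cone, and the axiom $\norm{a^2}\leq\norm{a^2+b^2}$ of \eqref{def-JB}.

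Your route is self-contained, resting only on standard $\JB$-algebra facts. These are that $\cal{M}_\sa$ with the restricted norm is a $\JB$-algebra, and that its squares form a convex cone equal to the set of elements with nonnegative spectrum. You also make explicit the functional-calculus step $(x^*\circ x)^{p/2}=0\Rightarrow x^*\circ x=0$, which the paper passes over. The paper's citation route is shorter.

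One small correction: drop your opening aside about ``$x^*\circ x=\tfrac12\{x,x^*,x\}$-type identities''. With the triple product used in the paper, $\{x,x^*,x\}$ is cubic in $x$ while $x^*\circ x$ is quadratic, so that identity is false as written. You never use it, so nothing else is affected.
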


\begin{proof}
By \cite[(5.1.2) p.~9]{CGRP18} or \cite[Lemma 3.1 (ii) p.~7]{HKPP20}, we have $x \circ x^* \geq 0$ for any $x \in \cal{M}$. The homogeneity follows from the computation 
\begin{align*}
\MoveEqLeft
\norm{\lambda x}_{\L^p(\cal{M})}
\ov{\eqref{norm-LpNA}}{=} \big[\tau\big(((\lambda x)^* \circ (\lambda x))^{\frac{p}{2}}\big) \big]^{\frac{1}{p}}
= \big[\tau\big((|\lambda|^2 x^* \circ x)^{\frac{p}{2}}\big) \big]^{\frac{1}{p}} \\
&= |\lambda| \big[\tau\big((x^* \circ x)^{\frac{p}{2}}\big) \big]^{\frac{1}{p}}
\ov{\eqref{norm-LpNA}}{=}  |\lambda| \norm{x}_{\L^p(\cal{M})}.
\end{align*}
Suppose that $\norm{x}_{\L^p(\cal{M})} = 0$ for some $x \in \cal{M}$. We have $\tau\big[(x^* \circ x)^{\frac{p}{2}}\big]=0$. By the faithfulness of the trace $\tau$, we infer that $(x^* \circ x)^{\frac{p}{2}}=0$. Hence $x \circ x^*=0$. By \cite[Lemma 3.1 (i) p.~7]{HKPP20} or \cite[Lemma 3.4.65 p.~382]{CGRP14}, we deduce that $x=0$. 
\end{proof}

The non-trivial part is the triangle inequality. We start with the case of $\JW^*$-algebras associated with von Neumann algebras. In this setting, recall that $\circ$ denotes the Jordan product introduced in \eqref{Jordan-product}. Our approach is to embed the Jordan modulus into an associative environment by a simple $2 \times2$ matrix argument. More precisely, we construct a canonical linear map $\Psi \co \cal{M} \to \M_2(\cal{M})$ such that the operator modulus $|\Psi(x)|$ encodes $(x^* \circ x)^{\frac{1}{2}}$. This allows us to transfer the triangle inequality from the Dixmier noncommutative $\L^p$-space of $\M_2(\cal{M})$ back to $\cal{M}$.

\begin{prop} 
\label{prop:norm-well-defined}
Let $\cal{M}$ be a von Neumann algebra equipped with a normal finite faithful trace $\tau$. Suppose that $1 \leq p < \infty$. The expression of \eqref{norm-LpNA} defines a norm on $\cal{M}$.
\end{prop}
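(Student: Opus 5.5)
The plan is to realize the Jordan modulus as an honest operator modulus inside the von Neumann algebra $\M_2(\cal{M})$, and then import the triangle inequality from the Dixmier noncommutative $\L^p$-space of $\M_2(\cal{M})$. Since Proposition \ref{prop:norm-homo} already gives absolute homogeneity and point separation, only the triangle inequality remains.

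The key identity is the expansion of the Jordan product \eqref{Jordan-product}:
$$
x^* \circ x=\tfrac{1}{2}(x^*x+xx^*),
$$
so the Jordan modulus averages $|x|^2$ and $|x^*|^2$. To produce this as $T^*T$ for a single operator $T$, I will stack $x$ and $x^*$ in one column. Define
$$
\Psi(x) \ov{\mathrm{def}}{=} \frac{1}{\sqrt{2}}\begin{pmatrix} x & 0 \\ x^* & 0 \end{pmatrix} \in \M_2(\cal{M}).
$$
A direct multiplication then gives
$$
\Psi(x)^*\Psi(x)=\begin{pmatrix} \frac{1}{2}(x^*x+xx^*) & 0 \\ 0 & 0\end{pmatrix}=\begin{pmatrix} x^*\circ x & 0 \\ 0 & 0\end{pmatrix}.
$$
By the functional calculus for block-diagonal positive operators, $|\Psi(x)|^p=\mathrm{diag}\big((x^*\circ x)^{\frac{p}{2}},0\big)$. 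Equip $\M_2(\cal{M})$ with the normal finite faithful trace $\tau \otimes \Tr$. Then
$$
\norm{\Psi(x)}_{\L^{p,D}(\M_2(\cal{M}))}^p=(\tau\otimes\Tr)\big(|\Psi(x)|^p\big)=\tau\big((x^*\circ x)^{\frac{p}{2}}\big)=\norm{x}_{\L^p(\cal{M})}^p,
$$
using \eqref{Dixmier-NC-spaces} and \eqref{norm-LpNA}. Thus $\Psi$ is an isometric embedding of $(\cal{M},\norm{\cdot}_{\L^p(\cal{M})})$ into the Dixmier space.

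The one subtle point is that $\Psi$ is only real-linear, because $x \mapsto x^*$ is conjugate-linear. This causes no difficulty, since additivity is all the triangle inequality needs. For $x,y \in \cal{M}$ we have $\Psi(x+y)=\Psi(x)+\Psi(y)$, so Minkowski's inequality for the Dixmier norm on the tracial von Neumann algebra $\M_2(\cal{M})$, valid for $1 \leq p<\infty$, gives
$$
\norm{x+y}_{\L^p(\cal{M})}=\norm{\Psi(x)+\Psi(y)}_{p}\leq \norm{\Psi(x)}_p+\norm{\Psi(y)}_p=\norm{x}_{\L^p(\cal{M})}+\norm{y}_{\L^p(\cal{M})}.
$$
Complex homogeneity is already supplied by Proposition \ref{prop:norm-homo}.

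The main thing to check carefully is the use of Minkowski's inequality for $\tau(|\cdot|^p)^{1/p}$ on the bounded part of a finite von Neumann algebra. This is the classical fact that Dixmier's formula \eqref{Dixmier-NC-spaces} is a norm, and it is consistent with the isometry \eqref{Kosaki-vs-Dixmier}; I will simply invoke it. Two routine verifications remain. First, $\tau\otimes\Tr$ is a normal finite faithful trace on $\M_2(\cal{M})$. Second, the $p$-th power of a block-diagonal positive operator is computed entrywise on the diagonal.
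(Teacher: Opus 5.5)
Your proposal is correct and follows essentially the same route as the paper. The paper uses the same real-linear column embedding $x \mapsto \begin{bmatrix} x & 0 \\ x^* & 0 \end{bmatrix}$ into $\M_2(\cal{M})$, equipped with the trace $\Tr \otimes \tau$, and transfers the triangle inequality from the Dixmier space $\L^{p,D}(\M_2(\cal{M}))$; the only difference is that the paper omits your $1/\sqrt{2}$ normalization and carries a factor $\sqrt{2}$ through the computation instead.
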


\begin{proof}
By Proposition \ref{prop:norm-homo}, it suffices to prove the triangle inequality. Consider the matrix algebra $\M_{2}(\cal{M})$ of $2 \times 2$ matrices with entries in the von Neumann algebra $\cal{M}$. We define the injective $\R$-linear map $\Psi \co \mathcal{M} \to \M_{2}(\cal{M})$ by
\begin{equation}
\label{inter-42}
\Psi(x) 
\ov{\mathrm{def}}{=} 
\begin{bmatrix} 
x &0\\ 
x^* &0\end{bmatrix}.
\end{equation}
For any $x \in \mathcal{M}$, we compute the modulus square of $\Psi(x)$:
\begin{equation*}
|\Psi(x)|^2 
= \Psi(x)^* \Psi(x) 
\ov{\eqref{inter-42}}{=} \begin{bmatrix} 
x^* & x \\
0 & 0
\end{bmatrix} 
\begin{bmatrix} 
x & 0\\ 
x^* & 0
\end{bmatrix} 
=\begin{bmatrix}
  x^* x + x x^*   & 0  \\
    0 &  0 \\
\end{bmatrix}
\ov{\eqref{Jordan-product}}{=} 2\begin{bmatrix}
x^* \circ x   & 0  \\
    0 &  0 \\
\end{bmatrix}.
\end{equation*}
Hence
\begin{equation}
\label{inter-35}
|\Psi(x)|
=\sqrt{2}\begin{bmatrix}
(x^* \circ x)^{\frac{1}{2}}   & 0  \\
    0 &  0 \\
\end{bmatrix}.
\end{equation}
The von Neumann algebra $\M_2(\cal{M})$ is canonically equipped with the normal finite faithful trace $\tr \ot \tau$. For any $x \in \mathcal{M}$, we have
\begin{align*}
\MoveEqLeft
\norm{\Psi(x)}_{\L^{p,D}(\M_2(\cal{M}))}^p        
\ov{\eqref{Dixmier-NC-spaces}}{=}(\tr \ot \tau)(|\Psi(x)|^p)  
\ov{\eqref{inter-35}}{=} 2^{\frac{p}{2}}(\tr \ot \tau)\bigg(\begin{bmatrix}
(x^* \circ x)^{\frac{p}{2}}   & 0  \\
    0 &  0 \\
\end{bmatrix}\bigg)\\
&= 2^{\frac{p}{2}} \tau \big[(x^* \circ x)^{\frac{p}{2}}\big]
\ov{\eqref{norm-LpNA}}{=} 2^{\frac{p}{2}} \norm{x}_{\L^p(\cal{M})}^p.
\end{align*}
We conclude that
\begin{equation}
\label{inter-789}
\norm{\Psi(x)}_{\L^{p,D}(\M_2(\cal{M}))}   
= \sqrt{2} \norm{x}_{\L^p(\cal{M})}.
\end{equation}
Hence, using the triangle inequality for the noncommutative $\L^p$-space $\L^{p,D}(\M_2(\cal{M}))$, we deduce that for any $x,y \in \cal{M}$ we have
\begin{align*}
\MoveEqLeft
\norm{x+y}_{\L^p(\cal{M})}
\ov{\eqref{inter-789}}{=} \frac{1}{\sqrt{2}}\norm{\Psi(x+y)}_{\L^{p,D}(\M_2(\cal{M}))} 
= \frac{1}{\sqrt{2}}\norm{\Psi(x)+\Psi(y)}_{\L^{p,D}(\M_2(\cal{M}))} \\
&\leq \frac{1}{\sqrt{2}}\big(\norm{\Psi(x)}_{\L^{p,D}(\M_2(\cal{M}))} + \norm{\Psi(y)}_{\L^{p,D}(\M_2(\cal{M}))}\big) 
\ov{\eqref{inter-789}}{=} \norm{x}_{\L^p(\cal{M})} + \norm{y}_{\L^p(\cal{M})}.
\end{align*}
\end{proof}

Since the von Neumann algebra case is established, the general $\JW^*$-case follows by structural reductions (reversible part, type $\I_2$ part) and by a direct sum argument based on central projections. Recall that a $\JW$-algebra $\A$ is said to be reversible \cite[Definition 4.24 p.~113]{AlS03} \cite[p.~25]{HOS84} if it is closed under symmetric products, i.e.~if $a_1,\ldots,a_k \in \A$ then 
$$
a_1a_2 \cdots a_k+a_k\cdots a_2a_1 \in \A.
$$
Note that this notion depends on the particular realization of the $\JBW$-algebra $\A$ as a weak* closed Jordan subalgebra of some $\B(H)_\sa$. It is worth noting that if $\cal{M}$ is a von Neumann algebra, then its selfadjoint part $\cal{M}_\sa \ov{\mathrm{def}}{=} \{x \in \cal{M} : x^* = x\}$ is a reversible $\JW$-algebra. Using the enveloping von Neumann algebra $\A''$ of a reversible $\JW$-algebra $\A$, we deduce the following result.

\begin{cor} 
\label{cor:norm-well-defined-JW}
Let $\cal{M}$ be a $\JW^*$-algebra such that its associated $\JW$-algebra $\A$ is reversible, equipped with a normal finite faithful trace $\tau$. Suppose that $1 \leq p < \infty$. Then the trace $\tau$ extends to a normal finite faithful trace on $\A''$ and the expression of \eqref{norm-LpNA} defines a norm on $\cal{M}$.
\end{cor}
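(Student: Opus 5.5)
Let $\cal{M} \subset \B(H)$ with $\cal{M}_\sa = \A$ reversible, and let $\cal{R} \ov{\mathrm{def}}{=} \A''$ be the von Neumann algebra generated by $\A$. The plan has two parts: first extend $\tau$ from $\A$ to a normal finite faithful trace $\tilde\tau$ on $\cal{R}$, then use that the Jordan operations of $\cal{M}$ are the restrictions of those of $\cal{R}$ to transfer Proposition \ref{prop:norm-well-defined}. Once $\tilde\tau$ exists, the second part is immediate. Indeed $\cal{M}$ is a Jordan-$*$-subalgebra of $\cal{R}$, so for $x \in \cal{M}$ the element $x^* \circ x$ computed in $\cal{M}$ equals the one computed in $\cal{R}$. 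Its continuous functional calculus, in particular $(x^*\circ x)^{p/2}$, lies in the norm-closed unital Jordan subalgebra generated by $x^*\circ x$, hence in $\cal{M}$, and it is the same operator in both algebras. Therefore $\norm{x}_{\L^p(\cal{M})} = \norm{x}_{\L^p(\cal{R})}$ computed with $\tilde\tau$. By Proposition \ref{prop:norm-well-defined}, the right-hand side is a norm on $\cal{R}$, and its restriction to $\cal{M}$ is a norm.

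For the extension, I would use the structure of reversible $\JW$-algebras. By \cite[Theorem 4.?]{AlS03} (the Hanche-Olsen/Størmer description), $\A$ coincides with the selfadjoint part of $\cal{R}$ fixed by a $*$-antiautomorphism of order two, possibly after cutting by a central projection of $\cal{R}$. Concretely, there is a central projection $e$ of $\cal{R}$ such that:
\begin{itemize}
\item on $e\cal{R}$ the map $a \mapsto ea$ identifies $\A$ with $(e\cal{R})_\sa$;
\item on $(1-e)\cal{R}$ there is a $*$-antiautomorphism $\alpha$ of order $2$ with $(1-e)\A = \{a \in ((1-e)\cal{R})_\sa : \alpha(a) = a\}$.
\end{itemize}
On the first summand, $\tau$ restricted to $e\A$ is already a trace on the von Neumann algebra $e\cal{R}$; here I would use \cite[Remark p.~149]{AlS03} that Jordan and associative traces coincide. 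On the second summand I would set $\tilde\tau(y) \ov{\mathrm{def}}{=} \tau\big(\tfrac{1}{2}(y+\alpha(y))\big)$ for $y$ selfadjoint, and extend linearly. Here $\tfrac12(y+\alpha(y))$ lies in $\A$. Positivity, normality and finiteness are inherited from $\tau$ and $\alpha$. Faithfulness holds because $y \geq 0$ and $y \neq 0$ imply $y+\alpha(y) \geq y$, which is nonzero. The extension property is clear since $\alpha = \Id$ on $\A$.

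The main obstacle is the trace property $\tilde\tau(yz)=\tilde\tau(zy)$. I would check it on symmetrized products. Since $\alpha$ is an antiautomorphism, $\tfrac12(yz+\alpha(yz)) = \tfrac12(yz + \alpha(z)\alpha(y))$. Write $y = y_1 + y_2$ with $y_1 = \tfrac12(y+\alpha(y))$ in the fixed part and $y_2$ skew, and decompose $z$ similarly. Then reduce $\tilde\tau([y,z])$ to $\tau$ applied to Jordan expressions such as $y_1\circ z_1$ and $y_2 \circ z_2$; the latter lies in $\A$ by reversibility. Invariance of $\tau$ under the symmetries $s\circ\cdot\circ s$ should then give the vanishing. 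If this bookkeeping becomes messy, the fallback is to argue through unitary invariance. It suffices to show $\tilde\tau(uyu^*) = \tilde\tau(y)$ for unitaries $u$ generated by symmetries in $\cal{R}$, by a density argument. For a symmetry $s \in \A$, this follows from $\alpha(sys) = s\alpha(y)s$ and the symmetry invariance of $\tau$, since products of symmetries of $\A$ generate $\cal{R}$ in the appropriate sense. Combining both summands gives $\tilde\tau$ on $\cal{R}$ and concludes the proof.
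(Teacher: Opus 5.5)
Your proof is correct in outline, but for the extension of the trace it takes a different route from the paper. The paper constructs nothing. It cites \cite[Theorem 1.1.5 p.~17]{ARU97} for $\A''=\cal{R}(\A)+\i\cal{R}(\A)$, so that $\cal{M}\subset\A''$, and \cite[Corollary 1.2.10 p.~35]{ARU97} for the existence of a normal finite faithful tracial extension. It then restricts the norm of Proposition~\ref{prop:norm-well-defined} exactly as in your first paragraph.

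You instead write the extension down explicitly as $\tilde\tau=\tau\circ\tfrac12(\Id+\alpha)$ on $(1-e)\A''$. This is $\tau$ composed with an explicit Jordan conditional expectation onto $(1-e)\cal{M}$, which makes positivity, normality, faithfulness and the extension property transparent. The price is the antiautomorphism structure theorem, whose reference you left unresolved. It does follow from the facts the paper uses:
\begin{itemize}
\item reversibility gives $\A=\cal{R}(\A)_\sa$;
\item $\cal{R}(\A)\cap\i\cal{R}(\A)$ is a weak*-closed two-sided ideal of $\A''$, so it equals $\A''e$ for a central projection $e$, and then $\A e=(\A''e)_\sa$;
\item on $(1-e)\A''=(1-e)\cal{R}(\A)\oplus\i(1-e)\cal{R}(\A)$, the map $\alpha(a+\i b)=a^*+\i b^*$ is an involutive $*$-antiautomorphism whose selfadjoint fixed points are exactly $(1-e)\A$.
\end{itemize}
Your justification that $(x^*\circ x)^{p/2}$ is the same operator in $\cal{M}$ and in $\A''$ also fills in the paper's ``obviously''.

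For the trace property, drop your first sketch. Split $y=y_1+y_2$ and $z=z_1+z_2$ into $\alpha$-fixed and $\alpha$-skew parts. The Jordan terms $y_1z_1+z_1y_1$ and $y_2z_2+z_2y_2$ cancel, and you get $\tilde\tau(yz)-\tilde\tau(zy)=\tau([y_1,z_2]+[y_2,z_1])$. These are commutators of a fixed element with a skew one. Their vanishing is exactly the nontrivial content, and no rewriting in Jordan expressions disposes of it.

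Your fallback is the right proof; you should make the density step explicit.
\begin{itemize}
\item For a symmetry $s$ of $(1-e)\A$ one has $\alpha(sys)=s\alpha(y)s$. By the symmetry invariance of $\tau$, this gives $\tilde\tau(sys)=\tilde\tau(y)$.
\item Hence $\tilde\tau(uyu^*)=\tilde\tau(y)$ for every $u$ in the group $G$ generated by these symmetries. Replacing $y$ by $yu$ gives $\tilde\tau(uy)=\tilde\tau(yu)$, so $\tilde\tau(xy)=\tilde\tau(yx)$ for all $x\in\Span G$.
\item Every element of $\A$ is a norm limit of linear combinations of projections $\tfrac12(1+s)$. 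So the norm closure of $\Span G$ contains the algebra generated by $(1-e)\A$, and $\Span G$ is weak*-dense in $(1-e)\A''$.
\item Normality of $\tilde\tau$ then finishes the argument.
\end{itemize}
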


\begin{proof}
According to Proposition \ref{prop:norm-homo}, it suffices to prove the triangle inequality. We have $\cal{M}=\A+\i \A$.  Let $\cal{N} \ov{\mathrm{def}}{=} \A''$ be the enveloping von Neumann algebra of the associated $\JW$-algebra $\A$. Recall that by \cite[Theorem 1.1.5 p.~17]{ARU97} or \cite[Theorem 2.4 p.~155]{Sto68}, we have $\cal{N}=\cal{R}(\A)+\i \cal{R}(\A)$, where $\cal{R}(\A)$ is the weak* closed real $*$-algebra generated by $\A$. Hence $\cal{M}$ is a subspace of $\cal{N}$. By \cite[Corollary 1.2.10 p.~35]{ARU97}, the trace $\tau$ extends to a normal finite faithful trace on $\cal{N}$, still denoted $\tau$. By Proposition \ref{prop:norm-well-defined}, $\norm{\cdot}_{\L^p(\cal{N})}$ is a norm on the von Neumann algebra $\cal{N}$. Since its restriction on $\cal{M}$ is obviously $\norm{\cdot}_{\L^p(\cal{M})}$, we conclude that $\norm{\cdot}_{\L^p(\cal{M})}$ is a norm on $\cal{M}$.
\end{proof}

\begin{remark} \normalfont
According to \cite[Corollary 6.5 p.~180]{Sto66} and \cite[Theorem 6.6 p.~181]{Sto66}, a $\JW$-factor is either reversible or of type $\I_2$. Recall that by \cite[Corollary 4.30 p.~117]{AlS03}, a $\JW$-algebra $\A$ is reversible if and only if the $\I_2$ summand of $\A$ is reversible. We refer to \cite[Theorem 6.2.5 p.~141]{HOS84} for information on the reversibility of real spin factors.
\end{remark}

Following \cite[Definition 3.21 p.~86]{AlS03}, we say that a $\JBW$-algebra $\A$ is of type $\I_n$, where $n$ is some cardinal number, if there exists an orthogonal family $(e_i)_{i \in I}$ of abelian projections in $\A$ with $\card I=n$ such that $1=\sum_{i \in I} e_i$ and central cover $c(e_i)=1$ for any $i \in I$. We refer to \cite[Theorem 3.39 p.~95]{AlS03} for the classification of $\JBW$-factors of type $\I_n$.

\begin{example} \normalfont
\label{ex-type-II-JW}
The $\JBW$-algebras of type $\I_2$ were classified by Stacey in \cite[Theorem 2 p.~125]{Sta82}. A $\JBW$-algebra $\A$ is of type $\I_2$ if and only if there exist an index set $I$, a family $(\Omega_i)_{i \in I}$ of (localizable) measure spaces and a family $(S_i)_{i \in I}$ of real spin factors, each of dimension strictly greater than 1 giving an isomorphism 
\begin{equation}
\label{isomorphism-type-I2}
\A
=\oplus_{i \in I} \L^\infty_\R(\Omega_i,S_i).
\end{equation}
It is worth noting that according to \cite[Theorem 6.1.8 p.~138]{HOS84}, if $\A$ is a $\JBW$-algebra then $\A$ is a $\JBW$ factor of type $\I_2$ if and only if $\A$ is isomorphic to a spin factor.
\end{example}

If $H$ is a complex Hilbert space and if $n \geq 1$, we consider the $n$-fold antisymmetric product $\Lambda^n(H)$ of $H$, equipped with the canonical inner product given by \cite[V.33]{Bou03} \cite[Exercise 12.4.39 p.~933]{KaR97b}
$$
\bigl\la \xi_1\wedge\cdots \wedge \xi_n,
\eta_1\wedge \cdots \wedge \eta_n\bigr \ra 
\ov{\mathrm{def}}{=}  \det\bigl[\langle \xi_i,\eta_j\ra\bigr],\qquad \xi_1,\ldots,\xi_n, \eta_1,\ldots,\eta_n \in H.
$$
By convention, we put $\Lambda^0(H) \ov{\mathrm{def}}{=} \mathbb{C}$. Let $\Omega$ be the unit element of $\Lambda^0(H)$. Following \cite[Exercise 12.4.40 p.~934]{KaR97b}, we introduce the antisymmetric Fock space over $H$ which is the Hilbertian direct sum $
\Lambda(H)
\ov{\mathrm{def}}{=} {\mathop{\oplus} \limits_{n \geq 0}} \Lambda^n(H)$.  For any $\xi \in H$, the creation operator $c(\xi) \co \Lambda(H) \to \Lambda(H)$ is defined by $c(\xi)\Omega \ov{\mathrm{def}}{=} \xi$ and
$$
c(\xi)(\xi_1\wedge\cdots \wedge \xi_n)
\ov{\mathrm{def}}{=} \xi \wedge \xi_1\wedge\cdots \wedge \xi_n,\qquad \xi_1,\ldots, \xi_n \in H,
$$
and then extending by linearity and continuity. Its adjoint $a(\xi) \ov{\mathrm{def}}{=} c(\xi)^*$ is the <<annihilation operator>>. By \cite[Exercise 12.4.40 p.~934]{KaR97b}, these operators satisfy the canonical anti-commutation relations \cite[p.~217]{EvK98}
$$
c(\xi)a(\eta)+a(\eta)c(\xi)
=\la \xi,\eta\ra_{\cal{H}} \Id_{\Lambda(H)}, 
\quad \text{and} \quad
a(\xi)a(\eta)+a(\eta)a(\xi)
=0,\quad \xi,\eta \in H.
$$
Now, we consider a real Hilbert space $\cal{H}$ and we use the previous construction with $H = \cal{H}_\mathbb{C}$. We let $s(\xi) \ov{\mathrm{def}}{=} a(\xi) + c(\xi)$ for any $\xi \in \cal{H}$. These operators satisfy the equality
$$
s(\xi)s(\eta)+s(\eta)s(\xi)
=2\la \xi,\eta \ra\Id_{\Lambda(H)}, \quad \xi, \eta \in \cal{H}.
$$
The von Neumann Clifford algebra associated with the real Hilbert space $\cal{H}$ is defined by
$$
\Gamma_{-1}(\cal{H}) 
\ov{\mathrm{def}}{=}  \{ s(\xi) : \xi \in \cal{H} \}''.
$$
We equip it with the normal finite faithful trace $\tau$ defined by $\tau(x) \ov{\mathrm{def}}{=} \la x\Omega,\Omega\ra_{\Lambda(H)}$, where $x \in \Gamma_{-1}(\cal{H})$. By \cite[Corollary 2.1 p.~8630]{EfP03}, we have
\begin{equation}
\label{trace-fermion}
\tau(1)=1
\quad \text{and} \quad 
\tau(s(h))=0, \quad h \in \cal{H}.
\end{equation}

For the proof of the next result, we will use the following observation. If $(e_i)_{ i \in I}$ is an orthonormal basis of the Hilbert space $\cal{H}$ then $(s(e_i))_{ i \in I}$ is a family of anticommuting Hermitian unitaries $\not= \pm \Id$ which generates the von Neumann algebra $\Gamma_{-1}(\cal{H})$. In particular, if $E$ is the weak closure of the real span of the family $(s(e_i))_{i \in I}$ then the space $E \oplus \R\Id_{\Lambda(H)}$ equipped with the Jordan product is a $\JW$-algebra isomorphic as a $\JBW$-algebra to a real spin factor as in Example \ref{spin-factor}, see \cite[p.~15]{ARU97}. More precisely, the map $E \oplus \R\Id_{\Lambda(H)} \to \cal{H} \oplus \R 1$, $s(e_k) \mapsto e_k$, $\Id_{\Lambda(H)} \mapsto 1$ defines an isomorphism.  It is essentially observed in \cite[Theorem 2 p.~1061]{Top66}.

\begin{prop}
\label{prop-norm-type-I2}
Let $\cal{M}$ be a $\JBW^*$-algebra such that the associated $\JBW$-algebra $\A$ is of type $\I_2$, equipped with a normal finite faithful trace $\tau$. Suppose that $1 \leq p < \infty$. Then $\cal{M}$ is isomorphic to a $\JW$-subalgebra of a von Neumann algebra $\cal{N}$ such that the trace $\tau$ extends to a normal finite faithful trace on $\cal{N}$. Finally, the expression of \eqref{norm-LpNA} defines a norm on $\cal{M}$.
\end{prop}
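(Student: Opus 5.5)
The plan is to use Stacey's classification (Example \ref{ex-type-II-JW}) to write $\A=\oplus_{i \in I} \L^\infty_\R(\Omega_i,S_i)$, where each $S_i=\cal{H}_i \oplus \R 1$ is a real spin factor. Each spin factor will then be realized inside a Clifford von Neumann algebra $\Gamma_{-1}(\cal{H}_i)$ through the map $e_k \mapsto s(e_k)$, $1 \mapsto \Id_{\Lambda(H_i)}$ described just before the statement. This is a Jordan isomorphism onto the $\JW$-algebra $E_i \oplus \R \Id$. Taking $\L^\infty$ and direct sums, we obtain an injective Jordan homomorphism
$$
\A \to \cal{N}_\sa, \quad \cal{N} \ov{\mathrm{def}}{=} \oplus_{i \in I} \L^\infty(\Omega_i) \overline{\ot} \Gamma_{-1}(\cal{H}_i),
$$
with weak* closed range, because a normal injective Jordan homomorphism between $\JBW$-algebras is isometric and has weak* closed range. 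Complexifying gives an embedding of $\cal{M}$ as a $\JW^*$-subalgebra of $\cal{N}$.

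Next, the trace has to be extended. The central projections $1_i$ give $\tau=\sum_i \tau|_{\A_i}$. On each block, the trace restricted to the center $\L^\infty_\R(\Omega_i)$ is a normal finite positive functional $\mu_i$. Since a spin factor has a unique tracial state (Example \ref{ex-trace-spin}), I expect the trace to factor as $\tau(f)=\int_{\Omega_i} \tau_{S_i}(f(\omega))\,\d\mu_i(\omega)$. To check this, I would use the center-valued trace or approximate by simple functions $\sum_k 1_{A_k} \ot s_k$. Then $\tau(1_{A_k}\circ s_k)$ is, for fixed $A_k$, a trace on the spin factor $1_{A_k}\A$, so it is a multiple of the unique tracial state. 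This multiple is $\mu_i(A_k)$. I then define $\tilde\tau \ov{\mathrm{def}}{=} \oplus_i (\mu_i \ot \tau_{\Gamma_i})$ on $\cal{N}$, using the Clifford trace \eqref{trace-fermion}. This is a normal finite trace. It agrees with $\tau$ on $\A$ because $\tau_{\Gamma}(s(h)+\lambda)=\lambda$ matches \eqref{trace-spin}. Finiteness follows from $\sum_i \mu_i(\Omega_i)=\tau(1)<\infty$.

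Faithfulness of $\tilde\tau$ is the main obstacle, since $\mu_i$ might fail to be faithful on $\L^\infty(\Omega_i)$. However, if $\mu_i(A)=0$ then $\tau(1_A)=0$ with $1_A \in \A$, so $1_A=0$ by faithfulness of $\tau$. Thus $\mu_i$ is faithful on $\L^\infty(\Omega_i)$. The tensor product of the faithful normal traces $\mu_i$ and $\tau_{\Gamma_i}$ is then faithful on $\L^\infty(\Omega_i)\overline{\ot}\Gamma_{-1}(\cal{H}_i)$. The case $\dim \cal{H}_i$ infinite needs no change, because $\Gamma_{-1}$ is then the hyperfinite $\mathrm{II}_1$ factor.

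Finally, the Jordan product, the involution and the continuous functional calculus in $\cal{M}$ coincide with those computed in $\cal{N}$, because the embedding is a Jordan $*$-homomorphism. Hence $(x^*\circ x)^{\frac{p}{2}}$ is the same element in both algebras, and $\norm{x}_{\L^p(\cal{M})}=\norm{x}_{\L^p(\cal{N})}$. Proposition \ref{prop:norm-well-defined} applied to $(\cal{N},\tilde\tau)$ gives the triangle inequality, and Proposition \ref{prop:norm-homo} supplies the remaining norm axioms.
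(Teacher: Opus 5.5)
Your proposal is correct and follows the same route as the paper. Both arguments use Stacey's decomposition $\A=\oplus_i \L^\infty_\R(\Omega_i,S_i)$, realize each spin factor inside $\Gamma_{-1}(\cal{H}_i)$, and take $\cal{N}=\oplus_i \L^\infty(\Omega_i,\Gamma_{-1}(\cal{H}_i))$ with trace $\oplus_i(\int_{\Omega_i}\cdot\,\d\nu_i\ot\tau_i)$ extending $\tau$, before concluding with Proposition \ref{prop:norm-well-defined}. You also justify two points the paper only asserts: that $\tau$ factors through the unique tracial state of each $S_i$ (the trace in question is $s\mapsto\tau(1_{A_k}\ot s)$ on $S_i$, not a trace on $1_{A_k}\A$, which is not a spin factor), and that the extended trace is faithful.
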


\begin{proof}
By Proposition \ref{prop:norm-homo}, we only need to check the triangle inequality. By Example \ref{ex-type-II-JW}, there exist an index set $I$, a family $(\Omega_i)_{i \in I}$ of (localizable) measure spaces and a family $(S_i)_{i \in I}$ of real spin factors, each of dimension strictly greater than 1 giving an isomorphism \eqref{isomorphism-type-I2}. Complexifying the decomposition \eqref{isomorphism-type-I2}, we obtain
$$
\cal{M}
=\oplus_{i \in I} \L^\infty(\Omega_i,\cal{S}_i),
$$ 
where $\cal{S}_{i}$ is the $\JW^*$-algebra (<<complex spin factor>>) associated with the real spin factor $S_i$. We denote by $\cal{H}_i$ the associated real Hilbert space of dimension strictly greater than 1. For any $i \in I$, the trace $\tau$ induces a finite measure $\nu_i$ on $\Omega_i$ and a finite trace $\int_{\Omega_i} \cdot \d \nu_i \ot \tau_i$ on each $\JW$-algebra $\L^\infty_\R(\Omega_i,S_i)$, where $\tau_i$ is the canonical normalized trace on $S_i$ (defined in Example \ref{ex-trace-spin}). 

Now, for any $i \in I$ we introduce the von Neumann algebra $\cal{N}_i \ov{\mathrm{def}}{=} \Gamma_{-1}(\cal{H}_i)$. As previously explained, we may realize each $\cal{S}_i$ as a $\JW^*$-subalgebra of $\cal{N}_i$. Moreover, by \eqref{trace-spin} and \eqref{trace-fermion}, the canonical trace of $\cal{N}_i$  extends the trace $\tau_i$ and consequently we use the same notation $\tau_i$ for this trace.
 
Now, we can also see the $\JW^*$-algebra $\L^\infty(\Omega_i,\cal{S}_i)$ as a $\JW^*$-subalgebra of the $\JW^*$-algebra  associated with the von Neumann algebra $\L^\infty(\Omega_i,\cal{N}_i)$. We equip the von Neumann algebra 
$$
\cal{N}
\ov{\mathrm{def}}{=} \oplus_{i \in I} \L^\infty(\Omega_i,\cal{N}_i)
$$ 
with the trace $\tau_{\cal{N}} \ov{\mathrm{def}}{=} \oplus_{i \in I} (\int_{\Omega_i} \cdot \d \nu_i \ot \tau_i)$. Note that by construction we can see the trace $\tau_{\cal{N}}$ as an extension of the trace $\tau$. By Proposition \ref{prop:norm-well-defined}, $\norm{\cdot}_{\L^p(\cal{N})}$ is a norm on the von Neumann algebra $\cal{N}$. Since its restriction on $\cal{M}$ is obviously $\norm{\cdot}_{\L^p(\cal{M})}$, we conclude that $\norm{\cdot}_{\L^p(\cal{M})}$ is also a norm on $\cal{M}$.
\end{proof}

Now, we combine the previous cases using the classical structure theory of $\JW$-algebras. The proof of the next result  reduces the general situation to a finite direct sum of well-understood summands.

\begin{thm}
\label{thm:JBW-Lp-norm}
Let $\cal{M}$ be a $\JW^*$-algebra equipped with a normal finite faithful trace $\tau$. Suppose that $1 \leq p < \infty$. Then $\cal{M}$ is isomorphic to a $\JW^*$-subalgebra of a von Neumann algebra $\cal{N}$ such that the trace $\tau$ extends to a normal finite faithful trace on $\cal{N}$. Moreover, the expression of \eqref{norm-LpNA} defines a norm on $\cal{M}$.
\end{thm}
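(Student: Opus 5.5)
The plan is to split $\cal{M}$ along a central projection into a reversible part and a type $\I_2$ part, treat each summand with the results already established, and then glue the two ambient von Neumann algebras by a direct sum.

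By \cite[Corollary 4.30]{AlS03}, or the structure theory of \cite{Sto66}, there is a central projection $e$ in the $\JW$-algebra $\A = \cal{M}_\sa$ such that $\A = e\A \oplus (1-e)\A$. Here $e\A$ is reversible (as a concrete $\JW$-algebra in $\B(eH)$) and $(1-e)\A$ is of type $\I_2$. Complexifying gives $\cal{M} = \cal{M}_1 \oplus \cal{M}_2$ with $\cal{M}_1 = e\cal{M}$ and $\cal{M}_2 = (1-e)\cal{M}$. Since $e$ is central, the trace restricts to normal finite faithful traces $\tau_1 = \tau|_{\cal{M}_1}$ and $\tau_2 = \tau|_{\cal{M}_2}$, and $\tau(x_1 + x_2) = \tau_1(x_1) + \tau_2(x_2)$. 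Faithfulness and normality pass to the corners directly, and the trace identity \eqref{trace} holds on each summand because the Jordan product respects the decomposition.

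For the reversible summand, Corollary \ref{cor:norm-well-defined-JW} gives the enveloping von Neumann algebra $\cal{N}_1 = (e\A)''$ acting on $eH$, which contains $\cal{M}_1$ as a $\JW^*$-subalgebra, together with a normal finite faithful trace extending $\tau_1$. For the type $\I_2$ summand, Proposition \ref{prop-norm-type-I2} supplies a von Neumann algebra $\cal{N}_2$ containing an isomorphic copy of $\cal{M}_2$ as a $\JW^*$-subalgebra, again with a normal finite faithful trace extending $\tau_2$.

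Now set $\cal{N} = \cal{N}_1 \oplus \cal{N}_2$ with the trace $\tau_{\cal{N}} = \tau_{\cal{N}_1} \oplus \tau_{\cal{N}_2}$. This trace is normal, finite and faithful, and the map $x_1 + x_2 \mapsto (x_1, \pi_2(x_2))$ embeds $\cal{M}$ as a weak*-closed Jordan $*$-subalgebra of $\cal{N}$, with $\tau_{\cal{N}}$ extending $\tau$. The Jordan product, the involution and the continuous functional calculus on positive elements are all compatible with this embedding, because a Jordan $*$-isomorphism preserves $x^* \circ x$ and square roots. Hence $\norm{x}_{\L^p(\cal{M})} = \norm{x}_{\L^p(\cal{N})}$ for every $x \in \cal{M}$. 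Proposition \ref{prop:norm-well-defined} says $\norm{\cdot}_{\L^p(\cal{N})}$ is a norm on $\cal{N}$, so its restriction to $\cal{M}$ is a norm, using Proposition \ref{prop:norm-homo} for homogeneity and point separation.

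The main obstacle is the decomposition step. One must check that the reversible summand is reversible in its concrete realization, so that Corollary \ref{cor:norm-well-defined-JW} applies; this is exactly what \cite[Corollary 4.30]{AlS03} and Størmer's theorem provide. One must also check that the trace extension in Proposition \ref{prop-norm-type-I2} is compatible with the abstract isomorphism used there, which is not the original concrete embedding. Neither point requires a new argument: the realization of the type $\I_2$ part inside Clifford algebras was only needed up to Jordan $*$-isomorphism, and the $\L^p$-expression \eqref{norm-LpNA} is invariant under trace-preserving Jordan $*$-isomorphisms.
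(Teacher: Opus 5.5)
Your proposal is correct and follows essentially the paper's proof: split $\cal{M}$ by a central projection into a reversible summand and a type $\I_2$ summand (the paper obtains this from St{\o}rmer's reversible/totally non-reversible decomposition \cite{Sto66}, where the totally non-reversible part is of type $\I_2$), apply Corollary \ref{cor:norm-well-defined-JW} and Proposition \ref{prop-norm-type-I2} to the two summands, and take $\cal{N}=\cal{N}_1 \oplus \cal{N}_2$. The only difference is the last step: the paper gets the triangle inequality by noting that $\norm{x}_{\L^p(\cal{M})}$ is the $\ell^p$-sum of the norms on the two summands, whereas you restrict the norm of $\L^p(\cal{N})$ through the global trace-preserving embedding, and both arguments are valid.
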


\begin{proof}
Consider the $\JW$-algebra $\A$ associated with the $\JW^*$-algebra $\cal{M}$. We have $\cal{M}=\A+\i \A$. We will use general structure results on $\JW$-algebras. By \cite[Theorem 6.4 p.~180]{Sto66} and \cite[Lemma 2.3 p.~154]{Sto68} (see also \cite[Proposition 2.1 p.~1427]{Ayu87}), there exist projections $e_1,e_2$ in the center $\Zc(\A)$ of the $\JW$-algebra $\A$ with sum $e_1 + e_2 = 1$ such that
\begin{enumerate}
	\item $\A_1 \ov{\mathrm{def}}{=} e_1\A$ is reversible,
	\item $\A_2 \ov{\mathrm{def}}{=} e_2\A$ is totally non reversible, hence of type $\I_2$ by \cite[Theorem 6.6 p.~181]{Sto66}.
\end{enumerate}
The trace $\tau$ decomposes as a direct sum $\tau = \tau_1 \oplus \tau_2$ of normal finite faithful traces on the summands. We denote by $\cal{M}_1$ and $\cal{M}_2$ the associated $\JW^*$-algebras. We have an identification $\cal{M}=\cal{M}_1 \oplus \cal{M}_2$. According to Corollary \ref{cor:norm-well-defined-JW}, $\cal{M}_1$ is a $\JW^*$-subalgebra of a von Neumann algebra $\cal{N}_1$ such that the trace $\tau_1$ extends to a normal finite faithful trace on $\cal{N}_1$. By Proposition \ref{prop-norm-type-I2}, we can suppose that $\cal{M}_2$ is a $\JW^*$-subalgebra of a von Neumann algebra $\cal{N}_2$ such that the trace $\tau_2$ extends to a normal finite faithful trace on $\cal{N}_2$. We deduce that $\cal{M}$ is isomorphic to a $\JW^*$-subalgebra of a von Neumann algebra $\cal{N}$ such that the trace $\tau$ extends to a normal finite faithful trace on $\cal{N}$.

The formula defining $\norm{\cdot}_{\L^p(\cal{M})}$ is compatible with this direct sum decomposition, i.e.~if $x = x_1+x_2$ is an element of $\cal{M}$ then we have
\[
\norm{x}_{\L^p(\cal{M})}
\ov{\eqref{norm-LpNA}}{=} \big(\norm{x_1}_{\L^p(\cal{M}_1)}^p+\norm{x_2}_{\L^p(\cal{M}_2)}^p\big)^{\frac{1}{p}}.
\]
Since $\norm{\cdot}_{\L^p(\cal{M}_1)}$ and $\norm{\cdot}_{\L^p(\cal{M}_2)}$ are norms, the $\ell^p$-sum of these norms is again a norm on the direct sum by \cite[Exercise 1.88 p.~68]{Meg98}. This proves the result.
%
%
\end{proof}

\section{Comparison of the norms in the case of $\JW^*$-algebras associated with von Neumann algebras}
\label{sec-VNA}

Let $\cal{M}$ be a von Neumann algebra equipped with a normal finite faithful trace $\tau$. In this section, we investigate the links between the Banach space $\L^{p,A}(\cal{M}) \ov{\mathrm{def}}{=} (\cal{M},\cal{M}_*)_{\frac{1}{p}}$ defined in \eqref{Def-Lp-state-JBW-intro} by interpolation and the space $\L^p(\cal{M})$ defined as the completion of $\cal{M}$ for the norm $
\norm{x}_{\L^p(\cal{M})}
\ov{\mathrm{def}}{=} \big(\tau \big[(x^* \circ x)^{\frac{p}{2}}\big]\big)^{\frac{1}{p}}$. Note that in this context, we have an isometric isomorphism
\begin{equation}
\label{Dixmier-vs-Arhancet}
\L^{p,A}(\cal{M})\ov{\eqref{arh=Dix-if-trace}}{=} \L^{p,K}(\cal{M})
\ov{\eqref{Kosaki-vs-Dixmier} }{=} \L^{p,D}(\cal{M}), 
\end{equation}
where $\L^{p,D}(\cal{M})$ is the Dixmier noncommutative $\L^p$-space, defined in \eqref{Dixmier-NC-spaces}. Moreover, for any $x,y \in \cal{M}$, we have the following elementary observation
\begin{equation}
\label{trace-Jordan}
\tau(x \circ y) 
\ov{\eqref{Jordan-product}}{=} \frac{1}{2} \tau(xy + yx) 
= \frac{1}{2} (\tau(xy) + \tau(xy)) 
= \tau(xy).
\end{equation}

\paragraph{Submajorization} In this section, we will use the language of submajorization. Here, we recall some background. Let $\cal{M}$ be a von Neumann algebra acting on a complex Hilbert space $H$, equipped with a normal semifinite faithful trace $\tau$. It is well-known, e.g.~\cite[Theorem 9.13 p.~494]{Mor17}, that any unbounded selfadjoint operator $x \co \dom(x) \subset H \to H$ admits a unique spectral measure $e^x \co \cal{B}(\R) \to \B(H)$ such that
$$
x 
=\int_\R \lambda \d e^x(\lambda).
$$
A closed densely defined unbounded operator $x \co \dom(x) \to H$ is said to be affiliated with $\cal{M}$ \cite[p.~49]{DoS24} \cite[Definition 5.6.2 p.~542]{KaR97a} if $ux = xu$ for all unitary $u \in \cal{M}'$.  
According to \cite[Proposition 2.1.4 (vi) p.~50]{DoS24} and \cite[Theorem 6.11 p.~404]{KaR97a}, a closed densely defined operator $x$, with polar decomposition $x = v |x|$, is affiliated with $\cal{M}$ if and only if $v \in \cal{M}$ and $|x|$ is affiliated with $\cal{M}$. By \cite[Proposition 2.1.4 (iv) (v) p.~50]{DoS24}, a selfadjoint operator $x$ is affiliated with $\cal{M}$ if and only if $e^x(B) \in \cal{M}$ for all Borel subset $B$ of $\mathbb{C}$ and in this case the operator $f(x)$ is affiliated with $\cal{M}$ for any Borel function $f \co \mathbb{C} \to \mathbb{C}$.

\begin{example} \normalfont
Consider the case $\cal{M} = \B(H)$. Since $\cal{M}' = \mathbb{C}1$, it is obvious that all closed densely defined operators acting on $H$ are affiliated with $\cal{M}$.
\end{example}

A closed densely defined unbounded operator $x \co \dom(x) \to H$ affiliated with $\cal{M}$ is said to be $\tau$-measurable \cite[Proposition 2.3.6 p.~72]{DoS24} if there exists $\lambda > 0$ such that $\tau(e^{|x|}(\lambda,\infty)) < \infty$. The set of all $\tau$-measurable operators is denoted by $S(\cal{M},\tau)$ and is a unital $*$-algebra by \cite[Theorem 2.3.8 p.~73]{DoS24} with respect to strong sums and products, denoted simply by $x+y$ and $xy$ for all $x,y \in S(\cal{M},\tau)$.

\begin{example} \normalfont
If $\tau$ is finite, all closed densely defined unbounded operators affiliated with $\cal{M}$ are $\tau$-measurable.
\end{example}

\begin{example} \normalfont
Consider the case $\cal{M} = \B(H)$ equipped with its standard trace $\tr \co \B(H)_+ \to [0,\infty]$. By \cite[Example 2.3.13 p.~75]{DoS24}, we have $S(\B(H),\tr)=\B(H)$.  
\end{example}

Following \cite[Definition 3.2.1 p.~129]{DoS24}, we define the generalized singular value function $\mu(x) \co [0,\infty] \to [0,\infty]$, $t \mapsto \mu_t(x)$ of a $\tau$-measurable operator $x$ by
$$
\mu_t(x)
\ov{\mathrm{def}}{=} \inf \big\{\lambda \geq 0 : \tau\big(e^{|x|}(\lambda,\infty)\big) \leq t \big\}, \quad t \geq 0.
$$
This function is decreasing and right-continuous. By \cite[p.~146]{DoS24}, the trace $\tau$ admits an extension $\tau \co S(\cal{M},\tau)_+  \to [0,\infty]$ on the set $S(\cal{M},\tau)_+$ of positive $\tau$-measurable operators defined by
\begin{equation}
\label{trace-mut}
\tau(x)
=\int_{0}^{\infty} \mu_t(x) \d t, \quad x \in S(\cal{M},\tau)_+.
\end{equation}
For any $\tau$-measurable operators $x$ and $y$, following \cite[Definition 3.9.1 p.~205]{DoS24} \cite{DeP07} we say that $x$ is submajorized by $y$, denoted by $x \prec \prec y$, if
\begin{equation}
\label{submajorized}
\int_0^s \mu_t(x) \d t 
\leq \int_0^s \mu_t(y) \d t, \quad s \geq 0.
\end{equation}
Let $f \co [0, \infty) \to [0, \infty)$ be a concave function. For any \textit{positive} $\tau$-measurable operators $x$ and $y$, we have by \cite[Theorem 5.2 p.~120]{DoS09}
\begin{equation}
\label{Dodds-Sukochev-concave}
f(x + y) 
\prec \prec f(x) + f(y).
\end{equation}
Let $f \co [0, \infty) \to [0, \infty)$ be an increasing convex function with $f(0)=0$. For any \textit{positive} $\tau$-measurable operators $x$ and $y$, we have by \cite[Theorem 4.5 p.~119]{DoS09}
\begin{equation}
\label{Dodds-Sukochev-convex}
f(x) + f(y)
\prec \prec f(x + y).
\end{equation}
The following result is a generalization of McCarthy's inequalities \cite[Lemma 2.6 p.~260]{McC67} (see also \cite{Rot67} and \cite[Theorem IV.2.14 p.~98]{Bha97}).

\begin{prop}
\label{prop:rotfeld-vna}
Let $\cal{M}$ be a von Neumann algebra equipped with a normal semifinite faithful trace $\tau$. Let $x$ and $y$ be positive $\tau$-measurable operators.
\begin{enumerate}
\item If $0 < r \leq 1$ and if the elements $x^r$ and $y^r$ belong to $\L^1(\cal{M})$ then
\begin{equation}
\label{Carthy-concave}
\tau\bigl((x+y)^r\bigr)
\leq \tau(x^r)+\tau(y^r).
\end{equation}

\item If $r \geq 1$ and if $(x+y)^r \in \L^1(\cal{M})$ then
\begin{equation}
\label{Carthy-convex}
\tau\bigl((x+y)^r\bigr)
\geq \tau(x^r)+\tau(y^r).
\end{equation}
\end{enumerate}
\end{prop}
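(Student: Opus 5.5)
The plan is to read both inequalities directly off the submajorization results \eqref{Dodds-Sukochev-concave} and \eqref{Dodds-Sukochev-convex}, applied with $f(t)=t^r$. We then pass from submajorization to traces by letting $s\to\infty$ in \eqref{submajorized} and using the integral formula \eqref{trace-mut}.

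\emph{Concave case.} Assume $0<r\leq 1$. The function $f(t)=t^r$ is concave and nonnegative on $[0,\infty)$. By \eqref{Dodds-Sukochev-concave}, $(x+y)^r \prec\prec x^r+y^r$, that is,
\[
\int_0^s \mu_t\big((x+y)^r\big)\d t \leq \int_0^s \mu_t(x^r+y^r)\d t
\]
for every $s\geq 0$. Let $s\to\infty$; monotone convergence applies since $\mu_t\geq 0$. By \eqref{trace-mut}, this gives $\tau((x+y)^r)\leq \tau(x^r+y^r)$.

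It remains to check $\tau(x^r+y^r)=\tau(x^r)+\tau(y^r)$. The extended trace is additive on $S(\cal{M},\tau)_+$, which is standard, e.g.\ in \cite{DoS24}. One can also argue directly: $x^r,y^r\in \L^1(\cal{M})$, so $x^r+y^r\in\L^1(\cal{M})$, and on $\L^1(\cal{M})$ the trace is linear. This yields \eqref{Carthy-concave}.

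\emph{Convex case.} Assume $r\geq 1$. Now $f(t)=t^r$ is increasing and convex with $f(0)=0$. By \eqref{Dodds-Sukochev-convex}, $x^r+y^r\prec\prec (x+y)^r$. Letting $s\to\infty$ as before and using \eqref{trace-mut}, we get $\tau(x^r+y^r)\leq \tau((x+y)^r)$, and the right-hand side is finite by hypothesis.

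We then split the left-hand side into $\tau(x^r)+\tau(y^r)$. Here the only care needed is that $x^r,y^r$ are not assumed integrable a priori, so additivity must be used in the $[0,\infty]$-valued sense. Additivity of the extended trace on positive measurable operators gives the identity directly. Alternatively, $0\leq x^r\leq x^r+y^r$ together with the monotonicity $\mu_t(a)\leq\mu_t(b)$ for $0\leq a\leq b$ shows $x^r,y^r\in\L^1(\cal{M})$, after which linearity applies. This gives \eqref{Carthy-convex}.

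The main (mild) obstacle is this bookkeeping: justifying that the extended trace defined through \eqref{trace-mut} is additive on positive $\tau$-measurable operators, and that the limit $s\to\infty$ is legitimate when the quantities may be infinite. Both points follow from standard properties of generalized singular values recorded in \cite{DoS24}. The inequalities themselves come directly from the Dodds–Sukochev submajorizations.
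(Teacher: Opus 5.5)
Your proposal is correct and follows the paper's route. Like the paper, you apply the submajorizations \eqref{Dodds-Sukochev-concave} and \eqref{Dodds-Sukochev-convex} with $f(t)=t^r$, let $s\to\infty$ in \eqref{submajorized}, and conclude with \eqref{trace-mut} and additivity of the trace. Your added remarks spell out bookkeeping the paper leaves implicit: monotone convergence, additivity of the extended trace, and integrability of $x^r,y^r$ via $0\leq x^r\leq x^r+y^r$ in the convex case.
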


\begin{proof}
1. The function $f \co t \mapsto t^r$ is concave on $[0,\infty)$. We obtain the submajorization
\begin{equation}
\label{eq:concave-submaj}
(x+y)^r 
= f(x+y) 
\ov{\eqref{Dodds-Sukochev-concave}}{\prec \prec} f(x)+f(y)
=x^r+y^r.
\end{equation}
With \eqref{submajorized}, we deduce that
$$
\int_0^s \mu_t((x+y)^r) \d t 
\leq \int_0^s \mu_t(x^r+y^r) \d t, \quad s \geq 0.
$$
Applying the trace and passing to the limit yields $(x+y)^r \in \L^1(\cal{M})$ and
\[
\tau\bigl((x+y)^r\bigr)
\ov{\eqref{trace-mut}}{=} \int_{0}^{\infty} \mu_t((x+y)^r) \d t
\leq \int_{0}^{\infty} \mu_t(x^r+y^r) \d t
\ov{\eqref{trace-mut}}{=} \tau(x^r+y^r)
=\tau(x^r)+\tau(y^r).
\]

2. Note that the function $f \co t \mapsto t^r$ is increasing and convex on $[0,\infty)$ and satisfies $f(0)=0$. We infer that
\begin{equation}
\label{eq:convex-submaj}
 x^r+y^r 
=f(x)+f(y) 
\ov{\eqref{Dodds-Sukochev-convex}}{\prec \prec} f(x+y)
=(x+y)^r.
\end{equation}
With \eqref{submajorized}, we deduce that
$$
\int_0^s \mu_t(x^r+y^r) \d t 
\leq \int_0^s \mu_t((x+y)^r) \d t, \quad s \geq 0.
$$
Finally, we obtain
\[
\tau(x^r)+\tau(y^r)
=\tau(x^r+y^r)
\ov{\eqref{trace-mut}}{=} \int_{0}^{\infty} \mu_t(x^r+y^r) \d t
\leq \int_{0}^{\infty} \mu_t((x+y)^r) \d t
\ov{\eqref{trace-mut}}{=} \tau\bigl((x+y)^r\bigr).
\]
\end{proof}

\begin{prop}
\label{prop-ine-von-Neumann-algebras}
Let $\cal{M}$ be a von Neumann algebra equipped with a normal finite faithful trace $\tau$. 

\begin{enumerate}
\item If $1 \leq p \leq 2$, then for every $x \in \cal{M}$ we have
\begin{equation}
\label{equivalence-1}
\norm{x}_{\L^{p,D}(\cal{M})}
\leq \norm{x}_{\L^p(\cal{M})}
\leq 2^{\frac{1}{p}-\frac{1}{2}}  \norm{x}_{\L^{p,D}(\cal{M})}.
\end{equation}

\item If $2 \leq p <\infty$, then for every $x \in \cal{M}$ we have
\begin{equation}
\label{equivalence-2}
\norm{x}_{\L^p(\cal{M})}
\leq \norm{x}_{\L^{p,D}(\cal{M})}
\leq 2^{\frac12-\frac1p} \norm{x}_{\L^p(\cal{M})}.
\end{equation}

\item For $p = 2$, the two norms coincide for all $x \in \cal{M}$, that is
\[
\norm{x}_{\L^2(\cal{M})}
=\norm{x}_{\L^{2,D}(\cal{M})},
\qquad x \in \cal{M}.
\]

\item Suppose that $1 \leq p < \infty$. For any normal element $x \in \cal{M}$, we have
\[
\norm{x}_{\L^p(\cal{M})}
=\norm{x}_{\L^{p,D}(\cal{M})}.
\]
\item Suppose that $1 \leq p < \infty$. If $\cal{M}$ is commutative then $\L^p(\cal{M})$ is canonically isometrically isomorphic with the Dixmier noncommutative $\L^p$-space $\L^{p,D}(\cal{M})$. 
\end{enumerate}
\end{prop}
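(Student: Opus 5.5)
The whole proposition should follow from the identity $2\,x^*\circ x = x^*x + xx^*$ combined with Proposition \ref{prop:rotfeld-vna}. Set $a \overset{\mathrm{def}}{=} x^*x$ and $b \overset{\mathrm{def}}{=} xx^*$. Both are positive, and since $\tau$ is tracial we have $\tau(a^{r}) = \tau(b^{r}) = \tau(|x|^{2r}) = \norm{x}_{\L^{2r,D}(\cal{M})}^{2r}$; this uses that $x^*x$ and $xx^*$ are unitarily equivalent off their kernels, e.g.\ via the polar decomposition $x = v|x|$, which gives $|x^*|^{2r} = v|x|^{2r}v^*$. Then
$$
\norm{x}_{\L^p(\cal{M})}^p = \tau\Big(\big(\tfrac{a+b}{2}\big)^{\frac p2}\Big) = 2^{-\frac p2}\,\tau\big((a+b)^{\frac p2}\big).
$$
Put $r = p/2$.

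For $1 \leq p \leq 2$ we have $r \leq 1$, so \eqref{Carthy-concave} gives $\tau((a+b)^r) \leq 2\tau(|x|^p)$ and hence $\norm{x}_{\L^p}^p \leq 2^{1-\frac p2}\norm{x}_{\L^{p,D}}^p$, which is the upper bound in \eqref{equivalence-1}. For the lower bound, $t \mapsto t^r$ is operator concave, so we can use the operator Jensen inequality. Since $\tau$ is tracial, an easier route is the trace Jensen inequality $\tau(f(\frac{a+b}{2})) \geq \frac12(\tau(f(a)) + \tau(f(b)))$ for concave $f$, which holds for finite traces (it follows, e.g., from concavity of $a \mapsto \tau(f(a))$). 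This gives $\norm{x}_{\L^p}^p \geq \tau(|x|^p)$.

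For $p \geq 2$ the roles reverse. By \eqref{Carthy-convex}, $\tau((a+b)^r) \geq 2\tau(|x|^p)$, so $\norm{x}_{\L^{p,D}}^p \leq 2^{\frac p2 - 1}\norm{x}_{\L^p}^p$. Convexity of $a \mapsto \tau(f(a))$ for convex $f$ then gives $\norm{x}_{\L^p}^p \leq \tau(|x|^p)$. This proves \eqref{equivalence-2}.

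The remaining items are immediate. Item 3 is the case $p = 2$ of either statement, or the direct computation $\tau(x^*\circ x) = \tau(x^*x)$ from \eqref{trace-Jordan}. For item 4, a normal $x$ satisfies $x^*x = xx^*$, so $x^*\circ x = |x|^2$. Item 5 follows from item 4, since every element of a commutative $\cal{M}$ is normal and the completions therefore coincide.

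The main obstacle is justifying the trace Jensen inequality, i.e.\ convexity or concavity of $a \mapsto \tau(f(a))$ on a general finite von Neumann algebra. I would derive it from submajorization. For concave $f$, the relevant fact is that $\frac{f(a)+f(b)}{2} \prec\prec f(\frac{a+b}{2})$ (for convex $f$ the relation is reversed). This in turn follows from the fact that $\mu$ of a sum is submajorized by the sum of the $\mu$'s, together with Ky Fan type arguments in \cite{DoS24}. Alternatively, one can cite the von Neumann algebra version of the Peierls/Brown--Kosaki trace Jensen inequality directly.
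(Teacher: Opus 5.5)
Your proposal is correct, and its overall architecture is the paper's:
\begin{itemize}
\item the same splitting $x^*\circ x=\frac{1}{2}(x^*x+xx^*)$;
\item the same identity $\tau(a^r)=\tau(b^r)=\tau(|x|^p)$, obtained from the polar decomposition;
\item the McCarthy inequalities of Proposition \ref{prop:rotfeld-vna} for the two inequalities carrying the constants $2^{\frac1p-\frac12}$ and $2^{\frac12-\frac1p}$;
\item the same treatment of items 3--5.
\end{itemize}
The only divergence is in how you justify the two constant-free, Jensen-type inequalities, and there the paper is more elementary than your plan.

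For $1\leq p\leq 2$, the paper uses exactly the first option you mention. Midpoint operator concavity of $t\mapsto t^r$ gives $\big(\frac{a+b}{2}\big)^r\geq\frac{a^r+b^r}{2}$, and positivity of $\tau$ finishes. No trace Jensen inequality is needed.

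For $p\geq 2$, $t\mapsto t^r$ is no longer operator convex once $r>2$, so an operator-level argument is unavailable. The paper instead writes $\tau(a^r)=\norm{a}_{\L^{r,D}(\cal{M})}^r$ for $a\geq 0$. It then combines the triangle inequality in $\L^{r,D}(\cal{M})$ with convexity of the scalar function $t\mapsto t^r$. This is precisely convexity of $a\mapsto\tau(a^r)$ on the positive cone, so what you call the main obstacle disappears.

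Your alternatives are valid but heavier than necessary. Citing the Brown--Kosaki trace Jensen inequality works, and it covers general convex or concave $f$.

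One caution about your submajorization sketch. What $\mu(a+b)\prec\prec\mu(a)+\mu(b)$ plus Hardy--Littlewood--P\'olya-type arguments directly give is only the trace-level inequality. In the concave case you must also use $\tau(1)<\infty$ to upgrade submajorization to majorization with equal totals. The sketch does not give the operator-level submajorization $\frac{f(a)+f(b)}{2}\prec\prec f\big(\frac{a+b}{2}\big)$ that you state; there, the comparison between $\mu\big(\frac{f(a)+f(b)}{2}\big)$ and $\frac{\mu(f(a))+\mu(f(b))}{2}$ goes the wrong way. Fortunately the trace-level statement is all the proof needs.
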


\begin{proof}
Let $x \in \cal{M}$. By \cite[Theorem 6.11 p.~404]{KaR97a}
, we can consider its polar decomposition $x = u|x|$, where $u$ is a partial isometry of $\cal{M}$. Then we have
\begin{equation}
\label{cons-polar}
x^*x = |x|^2
\quad \text{and} \quad
xx^* = u |x|^2 u^*.
\end{equation}
Consider the positive operators
\begin{equation}
\label{inter-TRE56}
A \ov{\mathrm{def}}{=} 
x^*x
=|x|^2 
\qquad\text{and}\qquad
B \ov{\mathrm{def}}{=} 
xx^*
=u|x|^2u^*.
\end{equation}
Note that $u^*u$ is the projection on the support of the positive operator $|x|$ (see \cite[Theorem 6.11 p.~404]{KaR97a} and \cite[p.~823]{Pal01}). Moreover, we have 
\begin{equation}
\label{inter-789J-bis}
x^*\circ x 
\ov{\eqref{Jordan-product}}{=} \frac{x^*x+xx^*}{2} 
= \frac{|x|^2 + u |x|^2 u^*}{2}
\ov{\eqref{inter-TRE56}}{=} \frac{A+B}{2}.
\end{equation}

1. Suppose that $0 < p \leq 2$ and consider the real number $r \ov{\mathrm{def}}{=} \frac{p}{2}$ of $(0,1]$. 
Note that the function $t \mapsto t^r$ is operator concave on $[0,\infty)$  (which is well-known, e.g., \cite[p.~42]{KLW24}). Hence
\begin{equation}
\label{inter-concave-p}
\Big(\frac{A+B}{2}\Big)^{r}
\geq \frac{A^{r} + B^{r}}{2}.
\end{equation}
Applying $\tau$ and using the tracial property we get
\begin{align}
\MoveEqLeft
\label{inter-2345676}
\norm{x}_{\L^p(\cal{M})}^p
\ov{\eqref{norm-LpNA}}{=} \tau\big[(x^* \circ x)^{\frac{p}{2}}\big]
\ov{\eqref{inter-789J-bis}}{=} \tau\Big[\Big(\frac{A+B}{2}\Big)^{r}\Big] 
\ov{\eqref{inter-concave-p}}{\geq} \tau\Big[\frac{A^{r} + B^{r}}{2}\Big] \\
&= \frac{\tau(A^{r}) + \tau(B^{r})}{2}
\ov{\eqref{inter-TRE56}}{=} \frac{\tau(|x|^{p}) + \tau\big((u|x|^2u^*)^{\frac{p}{2}}\big)}{2}. \nonumber
\end{align}
Now, since $u^*u$ is the projection on the support of the positive operator $|x|$ we have the equality $(u|x|^2u^*)^{\frac{p}{2}} = u|x|^{p}u^*$ by functional calculus and
\begin{equation}
\label{inter-789078}
\tau\big((u|x|^2u^*)^{\frac{p}{2}}\big)
= \tau(u|x|^{p}u^*)
=\tau(|x|^{p}u^*u)
= \tau(|x|^{p}).
\end{equation}
Consequently, we infer that
\[
\norm{x}_{\L^p(\cal{M})}^p
\ov{\eqref{inter-2345676} \eqref{inter-789078}}{\geq} \frac{\tau(|x|^p) + \tau(|x|^p)}{2}
= \tau(|x|^p)
\ov{\eqref{Dixmier-NC-spaces}}{=} \norm{x}_{\L^{p,D}(\cal{M})}^p.
\]
Therefore, for any $x \in \cal{M}$ we have
\[
\norm{x}_{\L^{p,D}(\cal{M})}
\leq \norm{x}_{\L^p(\cal{M})}.
\]
We prove the second inequality. We have
\begin{equation}
\label{eq:def-nonassoc-Lp}
\norm{x}_{\L^p(\cal{M})}^p
\ov{\eqref{norm-LpNA}}{=} 
\tau\big((x^*\circ x)^{\frac{p}{2}}\big)
\ov{\eqref{inter-789J-bis}}{=}
\tau\Big(\Big(\frac{A+B}{2}\Big)^r\Big)
=
2^{-r} \tau\big((A+B)^r\big).
\end{equation}
Applying \eqref{Carthy-concave} yields
\begin{equation}
\label{inter-tyui8}
\tau\big((A+B)^r\big)
\leq \tau(A^r)+\tau(B^r).
\end{equation}
Since $B \ov{\eqref{inter-TRE56}}{=} uAu^*$, we have $B^r=uA^ru^*$. We deduce that
\begin{equation}
\label{inter-tyui9}
\tau(B^r)
=\tau(uA^ru^*)
=\tau(A^ru^*u)
= \tau(A^r).
\end{equation}
Consequently, we obtain
\begin{equation}
\label{eq:trace-bound}
\tau\big((A+B)^r\big)
\ov{\eqref{inter-tyui8}\eqref{inter-tyui9}}{\leq } 2\tau(A^r).
\end{equation}
Combining \eqref{eq:def-nonassoc-Lp} and \eqref{eq:trace-bound} gives
\[
\norm{x}_{\L^p(\cal{M})}^p
\ov{\eqref{eq:def-nonassoc-Lp}\eqref{eq:trace-bound}}{\leq}
2^{-r}\cdot 2 \tau(A^r)
\ov{\eqref{inter-TRE56}}{=}
2^{1-r}\tau\bigl(|x|^{2r}\bigr)
=2^{1-\frac{p}{2}} \tau(|x|^p)
\ov{\eqref{Dixmier-NC-spaces}}{=} 2^{1-\frac{p}{2}} \norm{x}_{\L^{p,D}(\cal{M})}^p.
\]
Taking the $p$th root yields the second inequality of \eqref{equivalence-1}.

2. If we let $r \ov{\mathrm{def}}{=} \frac{p}{2} \geq 1$, we have
\begin{equation}
\label{inter-91}
\norm{x}_{\L^p(\cal{M})}
\ov{\eqref{norm-LpNA}}{=} \Big(\tau\Big[(x^*\circ x)^{\frac{p}{2}}\Big]\Big)^{\frac{1}{p}}
\ov{\eqref{inter-789J-bis}}{=}\Big(\tau\Big[\Big(\tfrac{A+B}{2}\Big)^r\Big]\Big)^{\frac{1}{p}}
\ov{\eqref{Dixmier-NC-spaces}}{=} \bigg(\norm{\frac{A+B}{2}}_{\L^{r,D}(\cal{M})}^r\bigg)^{\frac{1}{p}}.
\end{equation}
By the triangle inequality in the Banach space $\L^{r,D}(\cal{M})$, we see that
\begin{equation}
\label{inter-ERT}
\norm{\frac{A+B}{2}}_{\L^{r,D}(\cal{M})}
\leq
\frac{\norm{A}_{\L^{r,D}(\cal{M})}+\norm{B}_{\L^{r,D}(\cal{M})}}{2}.
\end{equation}
Raising to the power $r$ and using the scalar convexity of the function $t \mapsto t^r$ on $[0,\infty)$, since $r \geq 1$, we get
\[
\norm{\frac{A+B}{2}}_{\L^{r,D}(\cal{M})}^r
\ov{\eqref{inter-ERT}}{\leq}
\bigg(\frac{\norm{A}_{\L^{r,D}(\cal{M})}+\norm{B}_{\L^{r,D}(\cal{M})}}{2}\bigg)^r
\leq
\frac{\norm{A}_{\L^{r,D}(\cal{M})}^r+\norm{B}_{\L^{r,D}(\cal{M})}^r}{2}.
\]
Since $B \ov{\eqref{inter-TRE56}}{=} u A u^*$, we have $B^r=u A^r u^*$. Using the tracial property of $\tau$, we deduce that 
$$
\norm{B}_{\L^{r,D}(\cal{M})}^r 
\ov{\eqref{Dixmier-NC-spaces}}{=} \tau(B^r)
=\tau(u A^r u^*)
=\tau(A^r u^*u)
=\tau(A^r) 
\ov{\eqref{Dixmier-NC-spaces}}{=} \norm{A}_{\L^{r,D}(\cal{M})}^r.
$$ 
Finally, $A^r \ov{\eqref{inter-TRE56}}{=} (|x|^2)^r=|x|^{2r}=|x|^p$. So
\[
\norm{x}_{\L^p(\cal{M})}
\ov{\eqref{inter-91}}{=} \bigg(\norm{\frac{A+B}{2}}_{\L^{r,D}(\cal{M})}^r\bigg)^{\frac{1}{p}}
\leq \big(\norm{A}_{\L^{r,D}(\cal{M})}^r\big)^{\frac{1}{p}}
=(\tau(|x|^p))^{\frac{1}{p}}
\ov{\eqref{Dixmier-NC-spaces}}{=} \norm{x}_{\L^{p,D}(\cal{M})}.
\]

Now, we prove the second inequality of \eqref{equivalence-2}. We have
\begin{equation}
\label{eq:def-nonassoc-Lp-r}
\norm{x}_{\L^p(\cal{M})}^p
=
\tau\Big(\Big(\frac{A+B}{2}\Big)^r\Big)
=
2^{-r}\tau\big((A+B)^r\big).
\end{equation}
Since $r \geq 1$, we may apply \eqref{Carthy-convex} to the positive operators $A$ and $B$ and obtain
\begin{equation}
\label{eq:convex-AB}
\tau\big((A+B)^r\big)
\ov{\eqref{Carthy-convex}}{\geq}
\tau(A^r)+\tau(B^r).
\end{equation}
As before, $B \ov{\eqref{inter-TRE56}}{=} uAu^*$, hence $B^r = uA^ru^*$. Therefore, we see that
\begin{equation}
\label{inter-234}
\tau(B^r)
=
\tau(uA^ru^*)
=
\tau(A^ru^*u)
=
\tau(A^r).
\end{equation}
Hence
\[
\tau\big((A+B)^r\big)
\ov{\eqref{eq:convex-AB}\eqref{inter-234}}{\geq}
2\tau(A^r).
\]
Combining this estimate with \eqref{eq:def-nonassoc-Lp-r} gives
\[
\norm{x}_{\L^p(\cal{M})}^p
\geq
2^{-r}\cdot 2 \tau(A^r)
=
2^{1-r}\tau(|x|^{2r})
=
2^{1-\frac{p}{2}}\tau(|x|^p)
\ov{\eqref{Dixmier-NC-spaces}}{=}
2^{1-\frac{p}{2}} \norm{x}_{\L^{p,D}(\cal{M})}^p.
\]
Taking the $p$th root yields
\[
\norm{x}_{\L^{p,D}(\cal{M})}
\leq
2^{\frac12-\frac1p}
\norm{x}_{\L^p(\cal{M})}.
\]

3. If $p=2$ then for any $x \in \cal{M}$ we obtain
$$
\norm{x}_{\L^2(\cal{M})}
\ov{\eqref{norm-LpNA}}{=} \big(\tau(x^* \circ x)\big)^{\frac{1}{2}}
\ov{\eqref{trace-Jordan}}{=} \big(\tau(x^* x)\big)^{\frac{1}{2}}
=\tau\big(|x|^2\big)^{\frac{1}{2}}
\ov{\eqref{Dixmier-NC-spaces}}{=} \norm{x}_{\L^{2,D}(\cal{M})}.
$$

4. If the element $x$ is normal, then $x^*x = xx^*$ and
\begin{equation}
\label{normal-4}
x^* \circ x
\ov{\eqref{Jordan-product}}{=} \frac{x^*x+xx^*}{2}
= x^*x.
\end{equation}
So for any $p \geq 1$, we conclude that
\[
\norm{x}_{\L^p(\cal{M})}
\ov{\eqref{norm-LpNA}}{=} \big(\tau\big[(x^* \circ x)^{\frac{p}{2}}\big]\big)^{\frac{1}{p}}
\ov{\eqref{normal-4}}{=} \big(\tau\big[(x^*x)^{\frac{p}{2}}\big]\big)^{\frac{1}{p}}
= \big(\tau(|x|^p)\big)^{\frac{1}{p}}
\ov{\eqref{Dixmier-NC-spaces}}{=} \norm{x}_{\L^{p,D}(\cal{M})}.
\]

5. The last point is a direct consequence of the fourth point.
\end{proof}

Now, we will show that the constant $2^{\frac{1}{p}-\frac{1}{2}}$ in Proposition~\ref{prop-ine-von-Neumann-algebras} is optimal. We will use the two following results. The second result is folklore and we include the argument for the sake of completeness.

\begin{prop}
\label{prop-homo-bis}
Let $\cal{M}_1$ and $\cal{M}_2$ be $\JW^*$-algebras equipped with some normal finite faithful traces $\tau_{\cal{M}_1}$ and $\tau_{\cal{M}_2}$. Consider a normal $*$-Jordan homomorphism $\pi \co \cal{M}_1 \to \cal{M}_2$ which preserves the traces. Suppose that $1 \leq p < \infty$. Then $\pi$ induces an isometry $\pi_p \co \L^p(\cal{M}_1) \to \L^p(\cal{M}_2)$.
\end{prop}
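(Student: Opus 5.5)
The plan is to show that $\norm{\pi(x)}_{\L^p(\cal{M}_2)}=\norm{x}_{\L^p(\cal{M}_1)}$ for every $x \in \cal{M}_1$. Once this holds on the dense subspace $\cal{M}_1$ of $\L^p(\cal{M}_1)$, the linear map $\pi$ extends uniquely by continuity to an isometry $\pi_p \co \L^p(\cal{M}_1) \to \L^p(\cal{M}_2)$ between the completions. Note that $\L^p(\cal{M}_i)$ is indeed a normed completion by Theorem \ref{thm:JBW-Lp-norm}.

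The first step is algebraic. Since $\pi$ is a $*$-Jordan homomorphism, we have $\pi(x^*\circ x)=\pi(x)^*\circ\pi(x)$. So it suffices to check that $\pi(f(a))=f(\pi(a))$ for the positive element $a \ov{\mathrm{def}}{=} x^*\circ x$ and the continuous function $f(t)=t^{p/2}$ on $[0,\norm{a}]$. For polynomials $f$ without constant term this follows from multiplicativity on Jordan powers. Indeed, powers of a single element are well defined in a Jordan algebra, since the subalgebra generated by one selfadjoint element is associative and $\pi$ maps $a^n$ to $\pi(a)^n$. If $\pi$ is unital, constant terms are handled as well; otherwise I would approximate $t^{p/2}$ uniformly on $[0,\norm{a}]$ by polynomials vanishing at $0$, which is possible because $t^{p/2}$ vanishes at $0$.

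The second step is continuity. A $*$-Jordan homomorphism between $\JB^*$-algebras is contractive, because it restricts to a Jordan homomorphism between $\JB$-algebras. I would justify this by applying it to the associative $\JB$-subalgebra $C(\mathrm{sp}(a))$ generated by $a$, where it becomes a $*$-homomorphism of commutative $C^*$-algebras. Norm limits of polynomials in $a$ therefore pass through $\pi$, giving $\pi(a^{p/2})=\pi(a)^{p/2}$, with the continuous functional calculus computed in the norm-closed associative subalgebras generated by $a$ and $\pi(a)$, respectively.

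Finally, trace preservation $\tau_{\cal{M}_2}\circ\pi=\tau_{\cal{M}_1}$ gives
$$
\norm{\pi(x)}_{\L^p(\cal{M}_2)}^p=\tau_{\cal{M}_2}\big(\pi(a^{p/2})\big)=\tau_{\cal{M}_1}(a^{p/2})=\norm{x}_{\L^p(\cal{M}_1)}^p.
$$
I expect the main obstacle to be the functional calculus compatibility in a possibly non-unital situation. Here one must handle the fact that $\pi(1)$ may be only a projection, and also ensure that $f(0)=0$ is used. Normality of $\pi$ is not needed for this argument.
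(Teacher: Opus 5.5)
Your proposal is correct and follows the same route as the paper: use $\pi(x^*\circ x)=\pi(x)^*\circ\pi(x)$, commute $\pi$ with the functional calculus for $t\mapsto t^{p/2}$, apply $\tau_{\cal{M}_2}\circ\pi=\tau_{\cal{M}_1}$, and extend from the dense subspace $\cal{M}_1$. Your extra details make explicit what the paper compresses into ``an argument of functional calculus'': polynomial approximation by polynomials vanishing at $0$ handles a possibly non-unital $\pi$, and contractivity comes from the associative subalgebra generated by $a$.
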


\begin{proof}
Recall that the closed subalgebra of a unital $\JB^*$-algebra generated by a selfadjoint element and 1 is a commutative $\mathrm{C}^*$-algebra, see \cite[Proposition 3.4.1 p.~359]{CGRP14}. Let $x \in \cal{M}_1$. By an argument of functional calculus, we have
\begin{equation}
\label{inter-456}
(\pi(x)^*\circ\pi(x))^{\frac p2}
=(\pi(x^*\circ x))^{\frac p2}
=\pi\big((x^*\circ x)^{\frac p2}\big).
\end{equation}
Hence, we obtain
\begin{align*}
\MoveEqLeft
\norm{\pi(x)}_{\L^p(\cal{M}_2)}^p
\ov{\eqref{norm-LpNA}}{=}
\tau_{\cal{M}_2}\big((\pi(x)^*\circ\pi(x))^{\frac p2}\big) 
\ov{\eqref{inter-456}}{=} \tau_{\cal{M}_2}\big(\pi\big((x^*\circ x)^{\frac p2}\big)\big) \\
&=\tau_{\cal{M}_1}\big((x^*\circ x)^{\frac p2}\big) 
\ov{\eqref{norm-LpNA}}{=}\norm{x}_{\L^p(\cal{M}_1)}^p.
\end{align*}
Thus $\pi$ induces an isometry $\pi_p \co \L^p(\cal{M}_1) \to \L^p(\cal{M}_2)$.
\end{proof}

\begin{prop}
\label{prop-hom}
Let $\cal{M}_1$ and $\cal{M}_2$ be von Neumann algebras equipped with some normal finite faithful traces $\tau_{\cal{M}_1}$ and $\tau_{\cal{M}_2}$. Consider a normal $*$-homomorphism $\pi \co \cal{M}_1 \to \cal{M}_2$ which preserves the traces. Suppose that $1 \leq p < \infty$. Then $\pi$ induces an isometry $\pi_p \co \L^{p,D}(\cal{M}_1) \to \L^{p,D}(\cal{M}_2)$.
\end{prop}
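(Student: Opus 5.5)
The plan mirrors the argument of Proposition \ref{prop-homo-bis}, with the associative modulus $|x|=(x^*x)^{\frac12}$ in place of the Jordan modulus. The statement reduces to showing that
\[
\norm{\pi(x)}_{\L^{p,D}(\cal{M}_2)}=\norm{x}_{\L^{p,D}(\cal{M}_1)}, \qquad x \in \cal{M}_1.
\]
Once this holds, $\pi$ is a linear isometry from the dense subspace $\cal{M}_1$ of $\L^{p,D}(\cal{M}_1)$ into $\L^{p,D}(\cal{M}_2)$. It therefore extends uniquely by continuity to an isometry $\pi_p \co \L^{p,D}(\cal{M}_1) \to \L^{p,D}(\cal{M}_2)$.

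\textbf{Step 1: $\pi$ commutes with continuous functional calculus on positive elements.} Since $\pi$ is a $*$-homomorphism, $\pi(x)^*\pi(x)=\pi(x^*x)$. Set $a \ov{\mathrm{def}}{=} x^*x \geq 0$, whose spectrum lies in $[0,\norm{a}]$. For every polynomial $q$ we have $\pi(q(a))=q(\pi(a))$; this uses that $\pi$ is multiplicative and linear, and that it is unital when $q$ has a constant term. If $\pi$ is not unital, I would restrict to polynomials with $q(0)=0$, which is enough here since $t\mapsto t^{\frac p2}$ vanishes at $0$. Approximate $t\mapsto t^{\frac p2}$ uniformly on $[0,\norm{a}]$ by polynomials vanishing at $0$ (Weierstrass). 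Because $\pi$ is contractive, hence norm continuous, and because $\mathrm{Sp}(\pi(a))\subset \mathrm{Sp}(a)\cup\{0\}$, passing to the limit gives $\pi\big(a^{\frac p2}\big)=\pi(a)^{\frac p2}$. Hence
\[
|\pi(x)|^p=\pi(|x|^p).
\]

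\textbf{Step 2: trace preservation.} From Step 1 we obtain
\[
\tau_{\cal{M}_2}\big(|\pi(x)|^p\big)=\tau_{\cal{M}_2}\big(\pi(|x|^p)\big)=\tau_{\cal{M}_1}\big(|x|^p\big),
\]
and taking $p$-th roots gives the identity of norms stated above. Injectivity of $\pi$ is automatic, since a trace-preserving map between faithful traces cannot kill a positive nonzero element. Injectivity is not needed for the isometry claim anyway.

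The only point requiring care is the functional calculus step when $\pi$ is not unital. The polynomial approximation with $q(0)=0$ handles it. Normality of $\pi$ is not even used, beyond ensuring that the situation is the natural one.
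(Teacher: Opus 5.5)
Your proposal is correct and follows the paper's proof: you establish $|\pi(x)|^p=\pi(|x|^p)$ by functional calculus and then use $\tau_{\mathcal{M}_2}(\pi(y))=\tau_{\mathcal{M}_1}(y)$. Your Weierstrass approximation by polynomials with $q(0)=0$ spells out the ``argument of functional calculus'' that the paper invokes without detail, and it also covers the case where $\pi$ is not unital.
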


\begin{proof}
Let $x \in \cal{M}_1$. By an argument of functional calculus, we have
\begin{equation}
\label{inter-456-bis}
(\pi(x)^*\pi(x))^{\frac p2}
=(\pi(x^*x))^{\frac p2}
=\pi\big((x^* x)^{\frac p2}\big).
\end{equation}
Hence, we obtain
\begin{align*}
\MoveEqLeft
\norm{\pi(x)}_{\L^{p,D}(\cal{M}_2)}^p
\ov{\eqref{Dixmier-NC-spaces}}{=}
\tau_{\cal{M}_2}\big((\pi(x)^*\pi(x))^{\frac p2}\big) 
\ov{\eqref{inter-456-bis}}{=} \tau_{\cal{M}_2}\big(\pi\big((x^* x)^{\frac p2}\big)\big) \\
&=\tau_{\cal{M}_1}\big((x^*x)^{\frac p2}\big) 
\ov{\eqref{Dixmier-NC-spaces}}{=}\norm{x}_{\L^{p,D}(\cal{M}_1)}^p.
\end{align*}
We conclude that $\pi$ induces an isometry $\pi_p \co \L^{p,D}(\cal{M}_1) \to \L^{p,D}(\cal{M}_2)$.
\end{proof}

\begin{remark} \normalfont
\label{remark-optimal}
The constant of Proposition~\ref{prop-ine-von-Neumann-algebras} are optimal. Indeed, consider the complex Hilbert space $H = \mathbb{C}^2$ and the element $
x 
\ov{\mathrm{def}}{=} 
\begin{bmatrix}
0 & 1\\
0 & 0
\end{bmatrix}$ of the von Neumann algebra $\B(H)=\M_2(\mathbb{C})$. Then $x^*
=
\begin{bmatrix}
0 & 0\\
1 & 0
\end{bmatrix}$, $
x^*x
=
\begin{bmatrix}
0 & 0\\
0 & 1 
\end{bmatrix}$, $
xx^*
=
\begin{bmatrix}
1 & 0\\
0 & 0
\end{bmatrix}$ 
and $|x|=(x^*x)^{\frac{1}{2}}=\begin{bmatrix}
0 & 0\\
0 & 1 
\end{bmatrix}$. Consequently, we have
\begin{equation}
\label{inter-534}
x^* \circ x
\ov{\eqref{Jordan-product}}{=} \frac{x^*x+xx^*}{2}
=\frac12 \I_2.
\end{equation}
If $1 \leq p < \infty$, we deduce that
\begin{equation}
\label{inter-987}
\norm{x}_{\L^p(\B(H))}^p
\ov{\eqref{norm-LpNA}}{=} 
\Tr\big[(x^* \circ x)^{\frac{p}{2}}\big]
\ov{\eqref{inter-534}}{=}
\Tr\Big[\Big(\tfrac12 \I_2\Big)^{\frac{p}{2}}\Big]
=
2\Big(\frac12\Big)^{\frac{p}{2}}
=
2^{1-\frac{p}{2}}.
\end{equation}
On the other hand, the Schatten $p$-norm is
\[
\norm{x}_{S^p(H)}
=\norm{x}_{\L^{p,D}(\B(H))}
\ov{\eqref{Dixmier-NC-spaces}}{=} (\tr |x|^p)^{\frac{1}{p}}
=1.
\]
Thus, we obtain
$$
\frac{\norm{x}_{\L^p(\B(H))}}{\norm{x}_{S^p(H)}}
=
2^{\frac{1}{p}-\frac12}.
$$
Therefore, if $1 \leq p \leq 2$ the constant $2^{\frac{1}{p}-\frac{1}{2}}$ in the first part of Proposition~\ref{prop-ine-von-Neumann-algebras} cannot be improved. On the other hand, if $2 \leq p < \infty$, the same computation gives
\[
\frac{\norm{x}_{S^p(H)}}{\norm{x}_{\L^p(\B(H))}}
=
2^{\frac12-\frac{1}{p}}.
\]
Therefore, the constant $2^{\frac12-\frac1p}$ in the second part of Proposition~\ref{prop-ine-von-Neumann-algebras} cannot also be improved.

Let $\cal{M}$ be a non-abelian semifinite von Neumann algebra equipped with a normal semifinite faithful trace $\tau$. One can construct an isometric embedding of a scaled copy of $S^p_2$ into the Dixmier noncommutative $\L^p$-space $\L^{p,D}(\cal{M})$ and obtain a similar result. Indeed, by \cite[Lemma 2.3]{LeZ22}, there exists a non-zero $*$-homomorphism $\pi \co \M_2(\mathbb{C}) \to \cal{M}$ whose image is contained in $\cal{M} \cap \L^{1,D}(\cal{M})$. Then $\tau' \ov{\mathrm{def}}{=} \tau \circ \pi \co \M_2(\mathbb{C}) \to \mathbb{C}$ is a non-zero normal trace on the factor $\M_2(\mathbb{C})$. By \cite[Theorem 8.2.8. p.~517]{KaR97b}, we deduce that there exists $\lambda > 0$ such that $\tau'=\lambda \tr$. Note that $\pi$ preserves the trace $\tau$ and $\tau'$ by construction so it is injective by \cite[Lemma 2.1 p.~2283]{Arh19}. By Proposition \ref{prop-hom}, the map $\lambda^{-\frac{1}{p}} \pi \co S^p_2 \to \L^{p,D}(\cal{M})$ is an isometry. According to Proposition \ref{prop-homo-bis}, the map $\lambda^{-\frac{1}{p}} \pi \co \L^p(\M_2(\mathbb{C})) \to \L^p(\cal{M})$ is also an isometry. We conclude that the constants are also optimal in this more general setting.
%
\end{remark}


\section{Isomorphism between the spectral $\L^p$-spaces and the interpolation $\L^p$-spaces}
\label{sec-isomorphism}

The goal of this section is to compare the spectral nonassociative $\L^p$-space $\L^{p}(\cal{M})$ with the interpolation nonassociative $\L^p$-space $\L^{p,A}(\cal{M})$ constructed from the pair $(\cal{M},\cal{M}_*)$, in the case where $\cal{M}$ is a $\JW^*$-algebra. The strategy is to embed the given $\JW^*$-algebra into a suitable finite von Neumann algebra $\cal{N}$. Then, we exploit the existence of a trace-preserving Jordan conditional expectation, which allows us to realize $\L^{p,A}(\cal{M})$ as a positively 1-complemented subspace of the Dixmier noncommutative $\L^p$-space $\L^{p,D}(\cal{N})$ and $\L^{p}(\cal{M})$ as a subspace of the space $\L^{p}(\cal{N})$. We therefore briefly recall the relevant facts about this notion. Finally, we transfer the comparison estimates from Section \ref{sec-VNA}.

\paragraph{Jordan conditional expectations} We say that a positive map $T \co \cal{N} \to \cal{N}$ on a $\JB^*$-algebra $\cal{N}$ is faithful if $T(x)=0$ for some $x \in \cal{N}_+$ implies $x=0$. 

Let $\cal{N}$ be a unital $\JW^*$-subalgebra of a $\JW^*$-algebra $\cal{M}$. A linear map $Q \co \cal{M} \to \cal{M}$ with range $\cal{N}$ is called a Jordan conditional expectation onto $\cal{N}$ if it is unital, positive, and $\cal{N}$-modular, that is,
\begin{equation}
\label{Def-cond-exp-JBstar}
Q(x \circ Q(y))
=Q(x) \circ Q(y), \quad x,y \in \cal{M}.
\end{equation}
Setting \(x=1\) in \eqref{Def-cond-exp-JBstar}, we obtain
\[
Q(Q(y))=Q(y), \qquad y \in \cal M.
\]
Hence $Q$ is a projection. Since the range of $Q$ is $\cal N$, it follows that $Q$ is the identity on $\cal{N}$.

Let $\cal{N}$ be a $\JBW^*$-algebra equipped with a normal finite faithful trace $\tau$.  Suppose that $1 \leq p < \infty$. Let $Q \co \cal{N} \to \cal{N}$ be a normal contractive projection from $\cal{N}$ onto a $\JBW^*$-subalgebra $\cal{M}$ satisfying $\tau \circ Q = \tau$. Then by \cite[Proposition 3.11]{Arh24a} the linear map $Q$ induces a contractive projection $Q_p \co \L^{p,A}(\cal{N}) \to \L^{p,A}(\cal{N})$ whose range is isometric to the nonassociative space $\L^{p,A}(\cal{M},\tau|_{\cal M})$.

\paragraph{Spectral versus interpolation norms}
Now, we can state the main comparison result of this paper. This result shows that the spectral and interpolation norms are always equivalent on a $\JW^*$-algebra equipped with a normal finite faithful trace. The proof consists of two steps. First, we realize the interpolation $\L^p$-space $\L^{p,A}(\cal{M})$ as a positively complemented subspace of the noncommutative $\L^p$-space $\L^{p,D}(\cal{N})$ for a suitable finite von Neumann algebra $\cal{N}$. Second, we use the comparison estimates already established in Section \ref{sec-VNA}. As a byproduct, the third part answers \cite[Conjecture 5.3]{Arh24a} positively in the finite case.

\begin{thm}
\label{Th-comparison-interpolation}
Let $\cal{M}$ be a $\JW^*$-algebra equipped with a normal finite faithful trace $\tau$.

\begin{enumerate}
\item If $1 \leq p \leq 2$, then for every $x \in \cal{M}$,
\begin{equation}
\label{equivalence-interpolation-1}
\norm{x}_{\L^{p,A}(\cal{M})}
\leq \norm{x}_{\L^p(\cal{M})}
\leq 2^{\frac{1}{p}-\frac{1}{2}} \norm{x}_{\L^{p,A}(\cal{M})}.
\end{equation}

\item If $2 \leq p < \infty$, then for every $x \in \cal{M}$,
\begin{equation}
\label{equivalence-interpolation-2}
\norm{x}_{\L^p(\cal{M})}
\leq \norm{x}_{\L^{p,A}(\cal{M})}
\leq 2^{\frac12-\frac1p} \norm{x}_{\L^p(\cal{M})}.
\end{equation}

\item The Banach space $\L^{p,A}(\cal{M})$ is isometric to a positively $1$-complemented subspace of a Dixmier noncommutative $\L^p$-space $\L^{p,D}(\cal{N})$ associated with a finite von Neumann algebra $\cal{N}$, equipped with a suitable normal finite faithful trace.
\end{enumerate}
\end{thm}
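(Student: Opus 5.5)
By Theorem \ref{thm:JBW-Lp-norm}, we may regard $\cal{M}$ as a $\JW^*$-subalgebra of a finite von Neumann algebra $\cal{N}$ whose normal finite faithful trace, still denoted $\tau$, extends the trace of $\cal{M}$. The plan is to prove part 3 first and then read off parts 1 and 2 from Proposition~\ref{prop-ine-von-Neumann-algebras} applied to $\cal{N}$.

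For part 3, we need a normal, unital, positive, trace-preserving Jordan conditional expectation $Q \co \cal{N} \to \cal{N}$ with range $\cal{M}$. We would construct it by duality. For $y \in \cal{N}$, the functional $x \mapsto \tau(y \circ x)$ restricted to $\cal{M}$ is normal, so it is represented by a unique $Q(y) \in \cal{M}_*$. The real content is that $Q(y)$ actually lies in $\cal{M}$ with $\norm{Q(y)} \leq \norm{y}$. For $y \geq 0$ one has $\tau(y \circ x) \leq \norm{y}\tau(x)$ for $x \in \cal{M}_+$, which gives a Radon--Nikodym type density in $\cal{M}$ bounded by $\norm{y}$. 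This is the nonassociative Radon--Nikodym theorem for traces on $\JBW$-algebras, which we would cite from the literature on traces (Ayupov, King). Alternatively, we would take $Q$ to be the $\L^2$-orthogonal projection onto the closure of $\cal{M}$ in $\L^2(\cal{N})$ and show positivity and $\cal{M}$-modularity using the trace identity \eqref{trace}.

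Given $Q$, we have $\tau \circ Q = \tau$ and $Q$ is contractive. By \cite[Proposition 3.11]{Arh24a}, $Q$ induces a contractive positive projection $Q_p$ on $\L^{p,A}(\cal{N})$ whose range is isometric to $\L^{p,A}(\cal{M})$. By \eqref{Dixmier-vs-Arhancet}, $\L^{p,A}(\cal{N}) = \L^{p,D}(\cal{N})$ isometrically, which proves part 3. More concretely, we get $\norm{x}_{\L^{p,A}(\cal{M})} = \norm{x}_{\L^{p,D}(\cal{N})}$ for every $x \in \cal{M}$. The point to check is that the inclusion $\cal{M} \subset \cal{N}$ is compatible with the embeddings into the preduals. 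This holds because $i_{\cal{N}}(x)|_{\cal{M}} = i_{\cal{M}}(x)$, so the restriction map $\cal{N}_* \to \cal{M}_*$ and the inclusion $\cal{M} \to \cal{N}$ give a compatible morphism of couples. This yields a contraction $\L^{p,A}(\cal{M}) \to \L^{p,A}(\cal{N})$. In the other direction, the preadjoint of $Q$ composed with the restriction gives a contraction back, and the composite is the identity on $\cal{M}$, so the embedding is isometric.

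For parts 1 and 2, note that $x^* \circ x$ and its functional calculus are computed inside $\cal{M} \subset \cal{N}$ in the same way, since the Jordan product on $\cal{M}$ is the restriction of that of $\cal{N}$ and the trace extends $\tau$. Hence $\norm{x}_{\L^p(\cal{M})} = \norm{x}_{\L^p(\cal{N})}$, as in Proposition~\ref{prop-homo-bis} applied to the inclusion. Inserting $\norm{x}_{\L^{p,A}(\cal{M})} = \norm{x}_{\L^{p,D}(\cal{N})}$ and $\norm{x}_{\L^p(\cal{M})} = \norm{x}_{\L^p(\cal{N})}$ into \eqref{equivalence-1} and \eqref{equivalence-2} for $\cal{N}$ gives \eqref{equivalence-interpolation-1} and \eqref{equivalence-interpolation-2}.

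The main obstacle is the existence of the trace-preserving normal Jordan conditional expectation onto an arbitrary $\JW^*$-subalgebra, together with the interpolation compatibility needed to identify $\L^{p,A}(\cal{M})$ isometrically with its image in $\L^{p,D}(\cal{N})$. Everything else is a direct transfer of the results of Section~\ref{sec-VNA}.
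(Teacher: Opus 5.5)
Your proposal follows the paper's proof. You embed $\cal{M}$ into a finite von Neumann algebra $\cal{N}$ via Theorem~\ref{thm:JBW-Lp-norm} and take a trace-preserving normal Jordan conditional expectation $Q$ onto $\cal{M}$; the paper simply cites \cite[Proposition 2.5]{Arh24a} for $Q$, whereas you sketch a Radon--Nikodym construction, which does work. You then apply \cite[Proposition 3.11]{Arh24a} with \eqref{Dixmier-vs-Arhancet}, and transfer Proposition~\ref{prop-ine-von-Neumann-algebras} using $\norm{x}_{\L^p(\cal{M})}=\norm{x}_{\L^p(\cal{N})}$.

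There is one slip, in your direct check that $\norm{x}_{\L^{p,A}(\cal{M})}=\norm{x}_{\L^{p,D}(\cal{N})}$: the pairings are swapped. The inclusion $\cal{M}\to\cal{N}$ and the restriction $\cal{N}_*\to\cal{M}_*$ go in opposite directions, so they do not form a morphism of couples. The correct morphism $(\cal{M},\cal{M}_*)\to(\cal{N},\cal{N}_*)$ is the inclusion paired with $\phi\mapsto\phi\circ Q$, and the one back is $Q$ paired with restriction to $\cal{M}$. Both compatibilities follow from $\tau(Q(y)\circ x)=\tau(y\circ x)$ for $x\in\cal{M}$ and $y\in\cal{N}$.
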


\begin{proof}
By Theorem \ref{thm:JBW-Lp-norm}, there exist a finite von Neumann algebra $\cal{N}$ equipped with a normal finite faithful trace and a trace preserving normal $*$-homomorphism $J \co \cal{M} \to \cal{N}$ from $\cal{M}$ into the $\JW^*$-algebra associated with $\cal{N}$. Replacing $\cal{M}$ with its image $J(\cal{M})$, we may see $\cal{M}$ as a $\JW^*$-subalgebra of the $\JW^*$-algebra associated with $\cal{N}$. By Proposition \ref{prop-homo-bis}, the map $J$ induces an isometric embedding $J \co \L^p(\cal{M}) \to \L^p(\cal{N})$.

By \cite[Proposition 2.5]{Arh24a}, there exist a trace preserving normal faithful Jordan conditional expectation $Q \co \cal{N} \to \cal{N}$ onto the $\JW^*$-subalgebra $\cal{M}$. Then by \cite[Proposition 3.11]{Arh24a} the linear map $Q$ induces a contractive projection $Q_p \co \L^{p,D}(\cal{N}) \ov{\eqref{Dixmier-vs-Arhancet}}{=} \L^{p,A}(\cal{N}) \to \L^{p,A}(\cal{N}) \ov{\eqref{Dixmier-vs-Arhancet}}{=} \L^{p,D}(\cal{N})$ onto a subspace $X_p$ containing $\cal{M}$ and isometric to the nonassociative $\L^p$-space $\L^{p,A}(\cal{M},\tau|_\cal{M})=\L^{p,A}(\cal{M})$ via an isometry $\cal{I}_p \co \L^{p,A}(\cal{M}) \to X_p \subset \L^{p,A}(\cal{N})$ such that $\cal{I}_p|_{\cal{M}}=J$. It is clear that $Q_p$ is positive since $Q$ is positive. This proves the third assertion, since $X_p=\Ran Q_p$ is a positively $1$-complemented subspace of $\L^{p,A}(\cal{N}) \ov{\eqref{Dixmier-vs-Arhancet}}{=} \L^{p,D}(\cal N)$ isometric to $\L^{p,A}(\cal M)$.

We first treat the case $1 \leq p \leq 2$. Using Proposition \ref{prop-ine-von-Neumann-algebras}, for any $x \in \cal{M}$, we deduce the inequalities
\begin{align*}
\MoveEqLeft
\norm{x}_{\L^{p}(\cal{M})}
=\norm{J(x)}_{\L^{p}(\cal{N})} 
\ov{\eqref{equivalence-1}}{\leq}  2^{\frac{1}{p}-\frac{1}{2}}  \norm{J(x)}_{\L^{p,D}(\cal{N})} \\
&\ov{\eqref{Dixmier-vs-Arhancet}}{=} 2^{\frac{1}{p}-\frac{1}{2}} \norm{J(x)}_{\L^{p,A}(\cal{N})}
=2^{\frac{1}{p}-\frac{1}{2}} \norm{\cal{I}_p(x)}_{\L^{p,A}(\cal{N})}
=2^{\frac{1}{p}-\frac{1}{2}} \norm{x}_{\L^{p,A}(\cal{M})}         
\end{align*}
and
\begin{align*}
\MoveEqLeft
\norm{x}_{\L^{p,A}(\cal{M})}
=\norm{\cal{I}_p(x)}_{\L^{p,A}(\cal{N})}
= \norm{J(x)}_{\L^{p,A}(\cal{N})}
\ov{\eqref{Dixmier-vs-Arhancet}}{=} \norm{J(x)}_{\L^{p,D}(\cal{N})} \\
&\ov{\eqref{equivalence-1}}{\leq} \norm{J(x)}_{\L^{p}(\cal{N})}
=\norm{x}_{\L^{p}(\cal{M})}.          
\end{align*}
Now, we then treat the range $2 \leq p < \infty$, using the second part of Proposition~\ref{prop-ine-von-Neumann-algebras}. For any $x \in \cal{M}$, we have the inequalities
\begin{align*}
\MoveEqLeft
\norm{x}_{\L^{p}(\cal{M})}
=\norm{J(x)}_{\L^{p}(\cal{N})} 
\ov{\eqref{equivalence-2}}{\leq}    \norm{J(x)}_{\L^{p,D}(\cal{N})} \\
&\ov{\eqref{Dixmier-vs-Arhancet}}{=}  \norm{J(x)}_{\L^{p,A}(\cal{N})}
= \norm{\cal{I}_p(x)}_{\L^{p,A}(\cal{N})}
= \norm{x}_{\L^{p,A}(\cal{M})}         
\end{align*}
and
\begin{align*}
\MoveEqLeft
\norm{x}_{\L^{p,A}(\cal{M})}
=\norm{\cal{I}_p(x)}_{\L^{p,A}(\cal{N})}
= \norm{J(x)}_{\L^{p,A}(\cal{N})}
\ov{\eqref{Dixmier-vs-Arhancet}}{=} \norm{J(x)}_{\L^{p,D}(\cal{N})} \\
&\ov{\eqref{equivalence-2}}{\leq} 2^{\frac{1}{2}-\frac{1}{p}}\norm{J(x)}_{\L^{p}(\cal{N})}
=2^{\frac{1}{2}-\frac{1}{p}}\norm{x}_{\L^{p}(\cal{M})}.          
\end{align*}
In particular, the identity map on \(\cal M\) extends to an isomorphism between the Banach spaces $\L^p(\cal M)$ and $\L^{p,A}(\cal M)$.
\end{proof}

\section{A remark on the duality of spectral nonassociative $\L^p$-spaces}
\label{sec-duality}

In the associative setting of von Neumann algebras, the canonical pairing $\la x,y\ra_{\cal{M},\L^{1,D}(\cal{M})}= \tau(xy)$ identifies the Banach space $\L^{1,D}(\cal{M})$ isometrically with the predual of $\cal{M}$. It is therefore natural to ask whether the spectral nonassociative $\L^p$-spaces introduced above enjoy a similar duality property with respect to the Jordan pairing
\[
\la x,y \ra 
\ov{\mathrm{def}}{=} \tau(x \circ y).
\]
The following proposition shows that, in general, this duality fails to be isometric.

\begin{prop}
\label{prop-no-isometry-duality}
In general, the previous bracket does not induce an isometric embedding $\L^1(\cal{M}) \hookrightarrow (\L^\infty(\cal{M}))^*$.
\end{prop}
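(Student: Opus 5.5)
The plan is to exhibit an explicit counterexample in the smallest nonabelian case, reusing the computation of Remark~\ref{remark-optimal}. Take $\cal{M}=\M_2(\mathbb{C})$, viewed as a $\JW^*$-algebra with the Jordan product \eqref{Jordan-product} and the normalized-free trace $\tau=\Tr$. Take the matrix unit $x=\begin{bmatrix}0&1\\0&0\end{bmatrix}$. The claim is that the linear functional $\varphi_x \co y \mapsto \tau(x\circ y)$ on $\L^\infty(\cal{M})=\cal{M}$, with the operator norm, has norm strictly smaller than $\norm{x}_{\L^1(\cal{M})}$. Hence the map $\L^1(\cal{M}) \to (\L^\infty(\cal{M}))^*$, $x\mapsto\varphi_x$, is not isometric.

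First, I compute $\norm{x}_{\L^1(\cal{M})}$ directly from \eqref{inter-987} with $p=1$. Since $x^*\circ x=\frac12 \I_2$ by \eqref{inter-534}, we get
\[
\norm{x}_{\L^1(\cal{M})}=\Tr\big[(\tfrac12\I_2)^{\frac12}\big]=2\cdot 2^{-\frac12}=\sqrt{2}.
\]

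Second, I compute the dual norm. By the elementary identity \eqref{trace-Jordan}, $\tau(x\circ y)=\Tr(xy)$, which here equals the $(2,1)$ entry $y_{21}$ of $y$. Any entry of a matrix is bounded in modulus by its operator norm, so $|\varphi_x(y)|\le\norm{y}_\infty$. This bound is attained at $y=x^*$. Therefore $\norm{\varphi_x}=1$, which is the Schatten $1$-norm of $x$, as expected from the associative duality $(S^1_2)^*=M_2$.

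Comparing gives $\norm{\varphi_x}=1<\sqrt2=\norm{x}_{\L^1(\cal{M})}$, which proves the proposition. There is no real obstacle here; the only point to check is that the pairing $\la x,y\ra=\tau(x\circ y)$ is exactly the trace pairing $\Tr(xy)$. With that, the dual norm is the Dixmier $\L^1$-norm, and the gap between the two norms is the constant $2^{\frac1p-\frac12}$ of Remark~\ref{remark-optimal} at $p=1$. As in that remark, transporting this example via a trace-preserving $*$-homomorphism $\M_2(\mathbb{C})\to\cal{M}$ (Propositions \ref{prop-homo-bis} and \ref{prop-hom}) would extend the failure to any nonabelian finite von Neumann algebra. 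To carry this out, one would also need to check that the dual norm of the transported functional is still $1$, up to the scaling $\lambda$, using the trace-preserving conditional expectation.
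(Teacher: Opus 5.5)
Your proposal is correct and follows essentially the same route as the paper: the same counterexample $x=E_{12}$ in $\M_2(\mathbb{C})$ with $\tr$, the reduction $\tr(x\circ y)=\tr(xy)$ via \eqref{trace-Jordan}, and the comparison $\norm{\phi_x}=1<\sqrt{2}=\norm{x}_{\L^1(\cal{M})}$ using \eqref{inter-987}. The only minor difference is that you bound the dual norm directly by the entry $y_{21}$, attained at $y=x^*$, where the paper invokes the duality $\M_2(\mathbb{C})^*=S^1_2$. Your closing remark about transporting the example to general nonabelian algebras is not needed for the statement.
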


\begin{proof}
Consider the von Neumann algebra $\cal{M}= \M_2(\mathbb{C})$ equipped with its standard trace $\tr$ and the element $x=\begin{bmatrix} 
0 & 1 \\ 
0 & 0 
\end{bmatrix}$ of $\cal{M}$. As vector spaces, we have $\L^1(\cal{M})=\cal{M}$. We introduce the linear functional $\phi_x \co \cal{M} \to \mathbb{C}$ induced by $x$ via the Jordan product:
\[
\phi_x(y) 
\ov{\mathrm{def}}{=} \tr(x \circ y), \quad y \in \cal{M}.
\]
The norm of this functional is given by
\begin{align*}
\MoveEqLeft
\norm{\phi_x}_{(\L^\infty(\cal{M}))^*} 
= \sup \left\{ |\tr(x \circ y)| : y \in \cal{M}, \norm{y}_\cal{M} \leq 1 \right\} \\        
&\ov{\eqref{trace-Jordan}}{=} \sup \left\{ |\tr(xy)| : y \in \M_2(\mathbb{C}), \norm{y}_{\M_2(\mathbb{C})} \leq 1 \right\}
=\norm{x}_{S^1_2} 
= \tr(|x|), 
\end{align*}
where $|x| \ov{\mathrm{def}}{=} (x^*x)^{\frac{1}{2}}$ and where we use the standard Schatten $1$-norm (recall that $\M_2(\mathbb{C})^*=S^1_2$). Since $|x| = (x^*x)^{\frac{1}{2}}= \begin{bmatrix} 
0 & 0 \\ 
0 & 1 \end{bmatrix}$, we have $
\norm{\phi_x}_{(\L^\infty(\cal{M}))^*} 
= \tr\left( \begin{bmatrix} 
0 & 0 \\ 0 & 1 
\end{bmatrix} \right) = 1$. Since $\norm{x}_{\L^1(\cal{M})} \ov{\eqref{inter-987}}{=} \sqrt{2} \neq 1 = \norm{\phi_x}_{(\L^\infty(\cal{M}))^*}$, we conclude that the duality is not isometric.
\end{proof}

\section{Complex spin factors}
\label{sec-Complex-spin-factors}


In this section, we study complex spin factors and compute explicit formulas for the spectral nonassociative $\L^p$-norm on elements of the form $\lambda 1+h$, where $h$ is selfadjoint and orthogonal to $1$. We begin with the two-dimensional case and then pass to arbitrary dimension.

\paragraph{The case $\dim \cal{H}=1$}
We first analyze the two-dimensional case by identifying the complexified spin factor with $\ell^\infty_2(\mathbb{C})$. Consider a real Hilbert space $\cal{H}$ with $\dim \cal{H}=1$, generated by a unit vector $e$. Following the construction of Example \ref{spin-factor}, we have $\A=\cal{H} \oplus \R 1 = \R e \oplus \R1$. We also introduce the $\JBW^*$-algebra $\cal{M}=\A+\i \A$ associated with $\A$ by \cite[Corollary 5.1.41 p.~15]{CGRP18}. Hence $\cal{M} = \mathbb{C} e \oplus\mathbb{C} 1$. We consider the trace $\tau$ on $\A$ defined by $\tau(1) = 1$ and $\tau(e) = 0$. We still denote by $\tau$ the complex-linear extension of the trace on $\A$ to $\cal{M}$.

In the basis $(1,e)$ of $\A$, the Jordan product is given by
\begin{equation}
\label{spin-laws-1-h}
1 \circ 1 \ov{\eqref{product-spin-factor}}{=} 1,
\qquad
1 \circ e \ov{\eqref{product-spin-factor}}{=} e \circ 1 \ov{\eqref{product-spin-factor}}{=} e,
\qquad
e \circ e \ov{\eqref{product-spin-factor}}{=} 1.
\end{equation}
Moreover, both elements $1$ and $e$ are selfadjoint. 

Recall that $\ell^\infty_2(\mathbb{C})$ is a $\JBW^*$-algebra, with associated $\JBW$-algebra $\ell^\infty_2(\R)$, if it is endowed with the product and involution
\begin{equation}
\label{product}
(\alpha_1,\alpha_2) \circ (\beta_1,\beta_2)
\ov{\mathrm{def}}{=} (\alpha_1 \beta_1,\alpha_2 \beta_2), \quad (\alpha_1,\alpha_2)^* \ov{\mathrm{def}}{=} (\ovl{\alpha_1},\ovl{\alpha_2}).
\end{equation}

\begin{prop}
\label{prop-two-dimensional}
Suppose that $1 \leq p < \infty$. For any element $x = a e+b 1$ of $\cal{M}$ with $a,b \in \mathbb{C}$, we have
\begin{equation}
\label{norm-two-dimensional}
\norm{x}_{\L^p(\cal{M})}
= \bigg(\frac{|a+b|^p + |b-a|^p}{2}\bigg)^{\frac{1}{p}}.
\end{equation}
\end{prop}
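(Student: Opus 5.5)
The plan is to diagonalize the two-dimensional spin factor by identifying it with $\ell^\infty_2(\mathbb{C})$ via the idempotents $e_\pm \ov{\mathrm{def}}{=} \frac{1}{2}(1\pm e)$. From \eqref{spin-laws-1-h} one checks $e_\pm \circ e_\pm = \frac14(1 \pm 2e + e\circ e) = e_\pm$ and $e_+\circ e_- = \frac14(1 - e\circ e)=0$, with $e_++e_-=1$ and both selfadjoint. Hence the map $\Phi \co \cal{M} \to \ell^\infty_2(\mathbb{C})$ defined by $\Phi(\alpha e_+ + \beta e_-) = (\alpha,\beta)$ is a $*$-isomorphism of $\JBW^*$-algebras for the product and involution \eqref{product}. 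Writing $x = ae+b1$ in this basis gives $x = (a+b)e_+ + (b-a)e_-$, so $\Phi(x) = (a+b,\, b-a)$.

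Next I compute the trace on the new basis: $\tau(e_\pm) = \frac12(\tau(1)\pm\tau(e)) = \frac12$. Thus, transported by $\Phi$, $\tau$ becomes the functional $(\alpha,\beta)\mapsto \frac{\alpha+\beta}{2}$ on $\ell^\infty_2(\mathbb{C})$, i.e. integration against the uniform probability measure on two points.

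Then $x^*\circ x$ corresponds to $(|a+b|^2, |b-a|^2)$, and the functional calculus in the commutative $\mathrm{C}^*$-algebra generated by the selfadjoint element $x^*\circ x$ and $1$ (as recalled in the proof of Proposition \ref{prop-homo-bis}) is computed coordinatewise in $\ell^\infty_2$. This gives $(x^*\circ x)^{\frac p2} = |a+b|^p e_+ + |b-a|^p e_-$. Applying $\tau$ yields $\frac{|a+b|^p+|b-a|^p}{2}$, and taking the $p$th root gives \eqref{norm-two-dimensional}.

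The only point needing care is that the functional calculus inside $\cal{M}$ agrees with the coordinatewise one. The main step is therefore to verify that $\Phi$ is a genuine Jordan $*$-isomorphism, so that continuous functional calculus, being determined by polynomials and norm limits, is preserved. Since $\cal{M}$ is finite dimensional and associative in the basis $(e_+,e_-)$, this reduces to the idempotent computations above.
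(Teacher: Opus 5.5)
Your proposal is correct and is essentially the paper's argument. Your $\Phi$ is exactly the paper's map $\pi$ (since $\Phi(1)=(1,1)$ and $\Phi(e)=(1,-1)$), and the transported trace is the same functional $\tau'(\alpha_1,\alpha_2)=\frac{\alpha_1+\alpha_2}{2}$. The only difference is that the paper packages your coordinatewise functional-calculus step into Proposition~\ref{prop-homo-bis} (trace-preserving Jordan $*$-homomorphisms induce $\L^p$-isometries), and then computes the norm in $\ell^\infty_2(\mathbb{C})$.
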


\begin{proof} 
Consider the linear map $\pi \co \cal{M} \to \ell^\infty_2(\mathbb{C})$ defined by
\[
\pi(1) \ov{\mathrm{def}}{=} (1,1)
\quad \text{and} \quad
\pi(e) \ov{\mathrm{def}}{=} (1,-1).
\]
For any element $x = a e + b 1$ we have
\begin{equation}
\label{interAZT}
\pi(x) 
= (a+b,b-a).
\end{equation}
It is easy to check that $\pi$ is an isomorphism of $\JBW^*$-algebras. Its inverse is given by
\begin{equation}
\label{inter-Phi-1}
\pi^{-1}(\alpha_1,\alpha_2)
= \frac{\alpha_1+\alpha_2}{2} 1 + \frac{\alpha_1-\alpha_2}{2} e.
\end{equation}
Consider the trace $\tau'$ on the $\JBW^*$-algebra $\ell^\infty_2(\mathbb{C})$ defined by $
\tau'(\alpha_1,\alpha_2)
\ov{\mathrm{def}}{=}\frac{\alpha_1+\alpha_2}{2}$. Note that
\begin{equation}
\label{inter-56}
\tau\big(\pi^{-1}(\alpha_1,\alpha_2)\big)
\ov{\eqref{inter-Phi-1}}{=} \tau\bigg(\frac{\alpha_1+\alpha_2}{2} 1 + \frac{\alpha_1-\alpha_2}{2} e \bigg)
= \frac{\alpha_1+\alpha_2}{2},
\qquad \alpha_1,\alpha_2 \in \mathbb{C}.
\end{equation}
Hence the map $\pi$ is trace preserving. By Proposition \ref{prop-homo-bis}, we deduce that $\pi \co \L^p(\cal{M}) \to \L^p(\ell^\infty_2(\mathbb{C}))$ is an isometry. Consequently, for any element $x = a e + b 1$ of the $\JBW^*$-algebra $\cal{M}$, we have
\begin{align*}
\MoveEqLeft
\norm{x}_{\L^p(\cal{M})}
=\norm{\pi(x)}_{\L^p(\ell^\infty_2(\mathbb{C}))}
=\norm{(a+b,b-a)}_{\L^p(\ell^\infty_2(\mathbb{C}))} \\
&\ov{\eqref{norm-LpNA}}{=} \big(\tau' \big[((a+b,b-a)^* \circ (a+b,b-a))^{\frac{p}{2}}\big]\big)^{\frac{1}{p}} \\
&\ov{\eqref{product}}{=} \big(\tau' \big[(|a+b|^2,|b-a|^2)^{\frac{p}{2}}\big]\big)^{\frac{1}{p}}
=\bigg(\frac{|a+b|^p + |b-a|^p}{2}\bigg)^{\frac{1}{p}}.
\end{align*} 
\end{proof}

Now, we turn to the higher-dimensional case. We will reduce the higher-dimensional computation to the previous two-dimensional situation. 

\paragraph{Arbitrary dimension}
First, we describe a concrete model for the $\JBW^*$-algebra associated with a real spin factor. The abstract complexification result of \cite[Corollary 5.1.41 p.~15]{CGRP18} only yields an existence statement, whereas here we need an explicit model. Let $\cal{V}$ be a complex Hilbert space together with a conjugation $x \mapsto \ovl{x}$ (i.e.~an involutive conjugate-linear map which satisfies $\la \ovl{x},\ovl{y} \ra=\la y,x \ra$ for any $x,y \in \cal{V}$, see \cite[p.~235]{Wei80}), and a distinguished norm-one vector $1 \in \cal{V}$ such that $\ovl{1} = 1$. We equip it with the product 
\begin{equation}
\label{product-complex-spin-factor}
x \circ y
\ov{\mathrm{def}}{=} \la x, 1 \ra y + \la y, 1 \ra x - \la x, \ovl{y} \ra 1, \quad x,y \in \cal{V},
\end{equation}
and the involution
\begin{equation}
\label{involution-complex-spin-factor}
x^* 
\ov{\mathrm{def}}{=} 2 \la 1, x \ra 1 - \ovl{x}, \quad x \in \cal{V}.
\end{equation}
For vectors orthogonal to the unit, the involution and the conjugation are related by the formula $x^*
\ov{\eqref{involution-complex-spin-factor}}{=} -\ovl{x}$, where $x \perp 1$. Here $\perp$ stands for the orthogonality in the Hilbert space $\cal{V}$. Note that
\begin{equation}
\label{adjoint-of-1}
1^*
\ov{\eqref{involution-complex-spin-factor}}{=} 2 \la 1, 1 \ra 1 - \ovl{1}
=1
\end{equation}
and
\begin{equation}
\label{1-unit}
1 \circ y
\ov{\eqref{product-complex-spin-factor}}{=} \la 1, 1 \ra y + \la y, 1 \ra 1 - \la 1, \ovl{y} \ra 1
=y + \la y, 1 \ra 1 - \la \ovl{1}, \ovl{y} \ra 1
= y, \quad y \in \cal{V}.
\end{equation}
It is known that the formula 
\begin{equation}
\label{norm-complex-spin-factor}
\norm{x}^2 
\ov{\mathrm{def}}{=} \la x, x \ra + \left( \la x, x \ra^2 - | \la x, \ovl{x} \ra |^2 \right)^{\frac12}, \quad x \in \cal{V},
\end{equation}
defines a norm $\norm{\cdot}$ on the Hilbert space $\cal{V}$, which is equivalent to the Hilbertian norm $\norm{\cdot}_2 \ov{\mathrm{def}}{=} \la x, x\ra^{\frac{1}{2}}$. Then the vector space $\cal{V}$ equipped with $\norm{x}$, $\circ$ and $*$ is a unital $\JBW^*$-algebra with unit 1, called complex spin factor. We refer to \cite[Section 9.5]{Isi19}, \cite{EPV25} and references therein for more information. The selfadjoint part $\cal{V}_{\sa} \ov{\mathrm{def}}{=} \{x \in \cal{V} : x^* = x \}$, equipped with the product and the norm induced by \eqref{product-complex-spin-factor} and \eqref{norm-complex-spin-factor} is a $\JBW$-algebra. It is known that $\cal{V}_{\sa}$ is isomorphic to a real spin factor by an isomorphism sending $1$ to $1$. Through this identification, we equip $\cal{V}$ with the complex-linear extension of the canonical trace given in \eqref{trace-spin}. From now on, we write $\cal{M}=\cal{V}$.


\begin{prop}
\label{prop:Lp-general-spin-factor}
Suppose that $1 \leq p < \infty$. For any element $x = h+\lambda 1$ of $\cal{M}$, where $\lambda \in \mathbb{C}$, $h$ is selfadjoint, and $h \perp 1$, we have
\begin{equation}
\label{eq:Lp-general-spin}
\norm{x}_{\L^p(\cal{M})}
= \bigg(\frac{|\lambda + \norm{h}_2|^p + |\lambda - \norm{h}_2|^p}{2}\bigg)^{\frac{1}{p}}.
\end{equation}
\end{prop}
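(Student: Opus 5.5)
The plan is to reduce to Proposition \ref{prop-two-dimensional} by locating $x$ inside a two-dimensional trace-preserving Jordan $*$-subalgebra. If $h=0$, then $x=\lambda 1$ and $x^*\circ x=|\lambda|^2 1$, so $\norm{x}_{\L^p(\cal{M})}=|\lambda|$ because $\tau(1)=1$; this agrees with \eqref{eq:Lp-general-spin}. Assume from now on that $h\neq 0$, and put $e\ov{\mathrm{def}}{=} h/\norm{h}_2$, so $x=\norm{h}_2\, e+\lambda 1$.

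First, I will check that $e$ is a selfadjoint symmetry orthogonal to $1$. Since $e^*=e$ and $e\perp 1$, the formula $x^*=-\ovl{x}$ for vectors orthogonal to $1$ gives $\ovl{e}=-e$. Then \eqref{product-complex-spin-factor} yields
\[
e\circ e=2\la e,1\ra e-\la e,\ovl{e}\ra 1=\la e,e\ra 1=1.
\]
By \eqref{1-unit}, $1\circ e=e$. Hence $\cal{M}_0\ov{\mathrm{def}}{=}\mathbb{C}e\oplus\mathbb{C}1$ is a unital Jordan $*$-subalgebra with exactly the multiplication table \eqref{spin-laws-1-h}. It is finite dimensional, so it is weak* closed.

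Next, I will verify that the trace restricted to $\cal{M}_0$ satisfies $\tau(1)=1$ and $\tau(e)=0$. Under the identification of $\cal{V}_\sa$ with a real spin factor sending $1$ to $1$, the element $e$ is selfadjoint and satisfies $e\circ e=1$ with $e\neq\pm1$. In a real spin factor $\cal{H}\oplus\R 1$, \eqref{product-spin-factor} shows that such an element $a+\mu 1$ must have $\mu a=0$ and $a\neq 0$, so $\mu=0$. It therefore lies in $\cal{H}$, and \eqref{trace-spin} gives $\tau(e)=0$. The same reasoning gives directly that $\tau(h)=0$ for any selfadjoint $h\perp1$, without normalizing $h$.

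With this in hand, the inclusion $\cal{M}_0\hookrightarrow\cal{M}$ is a normal trace-preserving $*$-Jordan homomorphism, where $\cal{M}_0$ is the two-dimensional algebra of Proposition \ref{prop-two-dimensional}. Proposition \ref{prop-homo-bis} is stated for $\JW^*$-algebras, but its proof only uses functional calculus in the commutative $\mathrm{C}^*$-algebra generated by a selfadjoint element, so it applies here verbatim. Applying Proposition \ref{prop-two-dimensional} with $a=\norm{h}_2$ and $b=\lambda$ then gives \eqref{eq:Lp-general-spin}, using $|b-a|=|\lambda-\norm{h}_2|$.

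The main obstacle is the identification step: making sure that "$h\perp 1$" in the Hilbert space $\cal{V}$ really corresponds to the $\cal{H}$-part of the real spin factor, so that $\tau(h)=0$. The symmetry argument above avoids depending on how the isomorphism is defined in detail. The only input it needs is that $h\circ h=\norm{h}_2^2 1$, which follows from $\ovl{h}=-h$.
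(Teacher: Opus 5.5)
Your proposal is correct and follows essentially the same route as the paper: treat $h=0$ separately, normalize $e=h/\norm{h}_2$, check that $e\circ e=1$, and reduce to Proposition~\ref{prop-two-dimensional} on the two-dimensional subalgebra generated by $\{1,e\}$, using that the functional calculus for $x$ takes place inside that subalgebra. Your explicit verification that $\tau(e)=0$ (a symmetry $\neq \pm 1$ in a real spin factor has no scalar part) fills in a step the paper leaves implicit when it applies Proposition~\ref{prop-two-dimensional} with the restricted trace.
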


\begin{proof}
If $h = 0$, then $x = \lambda 1$ and
\begin{equation}
\label{interUYI}
x^* \circ x
= (\lambda 1)^* \circ (\lambda 1)
= \overline{\lambda} \lambda  (1^* \circ 1)
\ov{\eqref{adjoint-of-1}\eqref{1-unit}}{=}  |\lambda|^2 1.
\end{equation}
In this case, we obtain
\[
\norm{x}_{\L^p(\cal{M})}
\ov{\eqref{norm-LpNA}}{=} \big[\tau\big[(x^* \circ x)^{\frac{p}{2}}\big]\big]^{\frac{1}{p}}
\ov{\eqref{interUYI}}{=} \big[\tau\big(|\lambda|^p 1\big)\big]^{\frac{1}{p}}
= |\lambda|\tau(1)^{\frac{1}{p}}
= |\lambda|.
\]
This proves \eqref{eq:Lp-general-spin} in the case $h=0$. Now, assume that $x = h+\lambda 1$ with $h \neq 0$ and consider the vector
\begin{equation}
\label{def-de-e}
e 
\ov{\mathrm{def}}{=} \frac{h}{\norm{h}_2}.
\end{equation}
We have $e=e^* \ov{\eqref{involution-complex-spin-factor}}{=} 2 \la 1, e \ra 1 - \ovl{e}=-\ovl{e}$. Since $e \perp 1$, we deduce that
$$
e \circ e
\ov{\eqref{product-complex-spin-factor}}{=} \la e, 1 \ra e + \la e, 1 \ra e - \la e, \ovl{e} \ra 1
=1.
$$
The $\JBW^*$-subalgebra $\cal{M}_0$ of the $\JBW^*$-algebra $\cal{M}$ generated by $\{1,e\}$ is isomorphic to the two-dimensional algebra considered in Proposition \ref{prop-two-dimensional}. Note that the element $x$ belongs to $\cal{M}_0$ and can be written as
\[
x 
= \lambda 1 + h 
\ov{\eqref{def-de-e}}{=} \lambda 1 + \norm{h}_2 e.
\]
By Proposition~\ref{prop-two-dimensional}, we know that on $\cal{M}_0$ (equipped with the restriction of the trace) we have
\begin{equation}
\label{inter-46}
\bnorm{\lambda 1 + \norm{h}_2 e}_{\L^p(\cal{M}_0)}
\ov{\eqref{norm-two-dimensional}}{=}  \bigg(\frac{|\lambda + \norm{h}_2|^p + |\lambda - \norm{h}_2|^p}{2}\bigg)^{\frac{1}{p}}.
\end{equation}
Since $x$ belongs to $\cal{M}_0$, the functional calculus involved in the definition of the $\L^p$-norm takes place inside $\cal{M}_0$. Therefore the $\L^p$-norm of $x$ computed in $\cal M$ coincides with its $\L^p$-norm computed in $\cal{M}_0$. Hence
\[
\norm{x}_{\L^p(\cal{M})}
= \norm{x}_{\L^p(\cal{M}_0)}
\ov{\eqref{inter-46}}{=} \bigg(\frac{|\lambda + \norm{h}_2|^p + |\lambda - \norm{h}_2|^p}{2}\bigg)^{\frac{1}{p}}.
\]
\end{proof}

\begin{remark} \normalfont
\label{remark-JBW-triple}
It is a general fact that each $\JBW^*$-algebra $(\cal{M},*,\circ)$ has a canonical structure of $\JBW^*$-triple, in the sense of \cite[Definition 2.5.30 p.~167]{Chu12}, see \cite[Theorem 3.3 p.~283]{BKU78}, \cite[Lemma 3.1.6 p.~174]{Chu12} and \cite[p.~224]{CGRP14}. The triple product is defined by $\{x,y,z\} \ov{\mathrm{def}}{=} (x \circ y^*) \circ z + (y^* \circ z) \circ x - (x \circ z) \circ y^*$. If $\dim \cal{V}=n$ is greater than or equal to 3, the canonical $\JBW^*$-triple associated with $\cal{V}$ is the so-called Cartan factor of type $\IV_n$ and the triple product is given by $\{x,y,z\}=\la x,y\ra z+\la z,y \ra x-\la x, \ovl{z}\ra \ovl{y}$, see \cite[p.~439]{CGRP18}. We refer to the books \cite{Isi19} and \cite[Chapter 3]{Fri05} and to the papers \cite{Har74}, \cite{Har81} and \cite{HeI92} for more information on this $\JBW^*$-triple. Finally, note that if $\dim \cal{V}=1$, $\cal{V}$ identifies to $\mathbb{C}$ and that if $\dim \cal{V}=2$ then $\cal{V}$ identifies to $\ell^\infty_2=\mathbb{C} \oplus \mathbb{C}$, which is not a $\JBW^*$-factor.  
\end{remark}

\begin{example} \normalfont 
\label{ex-sym-2x2}
According to \cite[p.~553]{CGRP18}, the space $\Sym_2(\mathbb{C}) \ov{\mathrm{def}}{=} \bigg\{\begin{bmatrix}
   a  & b  \\
   b  & c  \\
\end{bmatrix} : a,b,c \in \mathbb{C} \bigg\}$ of all symmetric $2 \times 2$ complex matrices can be identified with a complex spin factor of dimension 3 when equipped with the operator norm and the Jordan product $\Sym_2(\mathbb{C}) \times \Sym_2(\mathbb{C}) \to \Sym_2(\mathbb{C})$, $(x,y) \mapsto x \circ y 
\ov{\eqref{Jordan-product}}{=} \frac{1}{2}(xy+yx)$. More precisely, consider the Pauli matrices $\I
\ov{\mathrm{def}}{=}
\begin{bmatrix}
1 & 0\\
0 & 1
\end{bmatrix}$, $
\sigma_1
\ov{\mathrm{def}}{=}
\begin{bmatrix}
1 & 0\\
0 & -1
\end{bmatrix}$, $
\sigma_2
\ov{\mathrm{def}}{=}
\begin{bmatrix}
0 & 1\\
1 & 0
\end{bmatrix}$ as in \cite[p.~140]{HOS84}, which define a basis of the space $\Sym_2(\mathbb{C})$. We have the equalities
\begin{equation}
\label{Pauli-relations}
\sigma_1^*=\sigma_1,
\quad
\sigma_2^*=\sigma_2,
\quad
\sigma_1 \circ \sigma_1 = \I,
\quad
\sigma_2 \circ \sigma_2 = \I
\quad \text{and} \quad
\sigma_1 \circ \sigma_2 = 0.
\end{equation}
Let $\cal{V}$ be the abstract complex spin factor of dimension $3$, namely $
\cal{V}
\ov{\mathrm{def}}{=}
\mathbb{C}1 \oplus \mathbb{C}e_1 \oplus \mathbb{C}e_2$, 
where $\{1,e_1,e_2\}$ is an orthonormal basis for the underlying Hilbert space and the conjugation is given by
\[
\overline{\lambda 1 + z_1 e_1 + z_2 e_2}
\ov{\mathrm{def}}{=}
\overline{\lambda} 1 - \overline{z_1} e_1 - \overline{z_2} e_2.
\]
Then $e_1$ and $e_2$ are selfadjoint, orthogonal to $1$ and satisfy
\[
e_1 \circ e_1 = 1,
\quad
e_2 \circ e_2 = 1,
\quad \text{and} \quad
e_1 \circ e_2 = 0.
\]
Hence the linear map $J \co \cal{V} \to \Sym_2(\mathbb{C})$, $\lambda 1 + z_1 e_1 + z_2 e_2 \mapsto \lambda \I + z_1 \sigma_1 + z_2 \sigma_2$ is an isomorphism of $\JBW^*$-algebras. The associated $\JBW$-algebra is the space $\Sym_2(\R) = \R \I \oplus \R \sigma_1 \oplus \R \sigma_2$ of symmetric $2 \times 2$ real matrices, equipped with the operator norm and the Jordan product. This $\JBW$-algebra is isomorphic to a real spin factor of dimension 3, see \cite[Example 3.36 p.~92]{AlS03}.  

We equip $\Sym_2(\mathbb{C})$ with the normal finite faithful trace $\tau$ defined by 
\[
\tau(x)
\ov{\mathrm{def}}{=}
\frac{1}{2}\tr(x), \quad x \in \Sym_2(\mathbb{C}).
\]
Then we have $\tau(\I)=1$ and $\tau(\sigma_1)=\tau(\sigma_2)=0$. So this trace coincides with the complexified canonical trace of the abstract spin factor introduced in \eqref{trace-spin}.

Consider an element $x=\alpha \I+\beta \sigma_1+\gamma \sigma_2$ of $\Sym_2(\mathbb{C})$, where $\alpha,\beta,\gamma \in \mathbb{C}$. Since $\I$, $\sigma_1$ and $\sigma_2$ are selfadjoint, we have
\[
x^*
=
\overline{\alpha}\I+\overline{\beta}\sigma_1+\overline{\gamma}\sigma_2.
\]
Using the bilinearity of the Jordan product, we obtain
\begin{align*}
\MoveEqLeft
x^* \circ x
=
(\overline{\alpha}\I+\overline{\beta}\sigma_1+\overline{\gamma}\sigma_2)
\circ
(\alpha \I+\beta \sigma_1+\gamma \sigma_2) \\
&=
\overline{\alpha}\alpha (\I \circ \I)
+\overline{\alpha}\beta (\I \circ \sigma_1)
+\overline{\alpha}\gamma (\I \circ \sigma_2) 
+\overline{\beta}\alpha (\sigma_1 \circ \I)
+\overline{\beta}\beta (\sigma_1 \circ \sigma_1)
+\overline{\beta}\gamma (\sigma_1 \circ \sigma_2) \\
&\quad
+\overline{\gamma}\alpha (\sigma_2 \circ \I)
+\overline{\gamma}\beta (\sigma_2 \circ \sigma_1)
+\overline{\gamma}\gamma (\sigma_2 \circ \sigma_2) \\
&\ov{\eqref{Pauli-relations}}{=}
|\alpha|^2 \I
+\overline{\alpha}\beta \sigma_1
+\overline{\alpha}\gamma \sigma_2
+\overline{\beta}\alpha \sigma_1
+|\beta|^2 \I 
+0
+\overline{\gamma}\alpha \sigma_2
+0
+|\gamma|^2 \I \\
&=
\bigl(|\alpha|^2+|\beta|^2+|\gamma|^2\bigr)\I
+
(\overline{\alpha}\beta+\ovl{\beta}\alpha)\sigma_1
+
(\overline{\alpha}\gamma+\ovl{\gamma}\alpha)\sigma_2.
\end{align*}
Finally, since $
\overline{\alpha}\beta+\overline{\beta}\alpha
=
2\Re(\overline{\alpha}\beta)$ and $
\overline{\alpha}\gamma+\overline{\gamma}\alpha
=
2\Re(\overline{\alpha}\gamma)$, 
we conclude that
\begin{equation}
\label{x*x}
x^* \circ x
=
\bigl(|\alpha|^2+|\beta|^2+|\gamma|^2\bigr)\I
+
2\Re(\overline{\alpha}\beta)\sigma_1
+
2\Re(\overline{\alpha}\gamma)\sigma_2.
\end{equation}
Set 
\begin{equation}
\label{def-de-m}
m
\ov{\mathrm{def}}{=}
|\alpha|^2+|\beta|^2+|\gamma|^2, \quad 
u
\ov{\mathrm{def}}{=}
2\Re(\overline{\alpha}\beta)
\quad \text{and} \quad 
v
\ov{\mathrm{def}}{=}
2\Re(\overline{\alpha}\gamma).
\end{equation}
We have $
x^* \circ x
\ov{\eqref{x*x}}{=}
m\I+u\sigma_1+v\sigma_2$. In matrix form, we obtain
\[
x^* \circ x
=
\begin{bmatrix}
m+u & v\\
v & m-u
\end{bmatrix}.
\]
Note that the characteristic polynomial $\chi(t)$ of $x^* \circ x$ is
\[
\chi(t)
=\det
\begin{bmatrix}
m+u-t & v\\
v & m-u-t
\end{bmatrix}
=
(m-t)^2-u^2-v^2.
\]
Therefore the eigenvalues of the selfadjoint matrix $x^* \circ x$ are $
m-r$ and $m+r$, where
\begin{equation}
\label{def-de-r}
r
\ov{\mathrm{def}}{=}
\sqrt{u^2+v^2}
=
2\sqrt{\Re(\overline{\alpha}\beta)^2+\Re(\overline{\alpha}\gamma)^2}.
\end{equation}
Since $x^* \circ x$ is positive (see \cite[p.~9]{CGRP18}), both numbers $m-r$ and $m+r$ are nonnegative. Suppose that $1 \leq p < \infty$. The eigenvalues of the element $(x^* \circ x)^{\frac{p}{2}}$ are $(m-r)^{\frac{p}{2}}$ and $(m+r)^{\frac{p}{2}}$. By definition of the nonassociative $\L^p$-norm, we finally obtain, for any matrix $x \in \Sym_2(\mathbb{C})$, the formula	
\begin{equation}
\label{une-autre-egalite}
\norm{x}_{\L^p(\Sym_2(\mathbb{C}))}
\ov{\eqref{norm-LpNA}}{=}
\big(\tau \big[(x^* \circ x)^{\frac{p}{2}}\big]\big)^{\frac{1}{p}}
=\big(\tfrac{1}{2}\Tr \big[(x^* \circ x)^{\frac{p}{2}}\big]\big)^{\frac{1}{p}}
=\bigg(
\frac{(m-r)^{\frac{p}{2}}+(m+r)^{\frac{p}{2}}}{2}
\bigg)^{\frac{1}{p}}.
\end{equation}
This norm does not coincide in general with the norm induced by the ambient Dixmier noncommutative norm on $\M_2(\mathbb{C})$. Indeed, for any element $x = \alpha \I+\beta \sigma_1+\gamma \sigma_2$, we have
\begin{align}
\MoveEqLeft
\label{egalite-sans-fin-4}
x^*x
=(\alpha \I+\beta \sigma_1+\gamma \sigma_2)^*(\alpha \I+\beta \sigma_1+\gamma \sigma_2) 
=
(\overline{\alpha} \I+\overline{\beta} \sigma_1+\ovl{\gamma} \sigma_2)
(\alpha \I+\beta \sigma_1+\gamma \sigma_2) \\
&=\overline{\alpha}\alpha \I
+\overline{\alpha}\beta \sigma_1
+\overline{\alpha}\gamma \sigma_2
+\overline{\beta}\alpha \sigma_1
+\overline{\beta}\beta \sigma_1^2
+\overline{\beta}\gamma \sigma_1\sigma_2 
+\overline{\gamma}\alpha \sigma_2
+\overline{\gamma}\beta \sigma_2\sigma_1
+\overline{\gamma}\gamma \sigma_2^2 \nonumber \\
&=
|\alpha|^2 \I
+\ovl{\alpha}\beta \sigma_1
+\ovl{\alpha}\gamma \sigma_2
+\ovl{\beta}\alpha \sigma_1
+|\beta|^2 \I
+\ovl{\beta}\gamma (-\i\sigma_3) 
+\ovl{\gamma}\alpha \sigma_2
+\ovl{\gamma}\beta (\i\sigma_3)
+|\gamma|^2 \I \nonumber \\
&=
\bigl(|\alpha|^2+|\beta|^2+|\gamma|^2\bigr)\I
+
(\ovl{\alpha}\beta+\ovl{\beta}\alpha)\sigma_1
+
(\ovl{\alpha}\gamma+\ovl{\gamma}\alpha)\sigma_2 
+
\bigl(-\i\ovl{\beta}\gamma+\i\ovl{\gamma}\beta\bigr)\sigma_3 \nonumber \\
&=
\bigl(|\alpha|^2+|\beta|^2+|\gamma|^2\bigr)\I
+
2\Re(\ovl{\alpha}\beta)\sigma_1
+
2\Re(\ovl{\alpha}\gamma)\sigma_2
+
2\Im(\ovl{\beta}\gamma)\sigma_3, \nonumber
\end{align}
where $
w
\ov{\mathrm{def}}{=} 2\Im(\ovl{\beta}\gamma)$ and $
\sigma_3
\ov{\mathrm{def}}{=} \begin{bmatrix}
0&\i\\
-\i&0
\end{bmatrix}$. 
Therefore, if we use the notations \eqref{def-de-m} and $w
\ov{\mathrm{def}}{=}
2\Im(\ovl{\beta}\gamma)$, 
then
\begin{equation}
\label{eq:x*x-spin}
x^*x
\ov{\eqref{egalite-sans-fin-4}}{=}
m\I+u\sigma_1+v\sigma_2+w\sigma_3
=\begin{bmatrix}
m+u & v+\i w\\
v-\i w & m-u
\end{bmatrix}.
\end{equation}
Hence the characteristic polynomial of the matrix $x^*x$ is given by
\begin{align*}
\chi_{x^*x}(t)
&\ov{\eqref{eq:x*x-spin}}{=}
\det
\begin{bmatrix}
m+u-t & v+\i w\\
v-\i w & m-u-t
\end{bmatrix} 
=(m+u-t)(m-u-t)-(v+\i w)(v-\i w) \\
&=
(m-t)^2-u^2-v^2-w^2.
\end{align*}
If we introduce the positive real number
\begin{equation}
\label{eq:def-s-spin}
s
\ov{\mathrm{def}}{=}
\sqrt{u^2+v^2+w^2}
\ov{\eqref{egalite-sans-fin-4}}{=}
2\sqrt{\Re(\ovl{\alpha}\beta)^2+\Re(\ovl{\alpha}\gamma)^2+\Im(\ovl{\beta}\gamma)^2},
\end{equation}
the eigenvalues of the positive matrix $x^*x$ are $m-s$ and $m+s$. Since $x^*x \geq 0$, both numbers $m-s$ and $m+s$ are nonnegative. It follows that the eigenvalues of $|x|^p=(x^*x)^{\frac p2}$ are $(m-s)^{\frac p2}$ and $(m+s)^{\frac p2}$. Consequently, if $\tau=\frac12 \Tr$ denotes the normalized trace on the matrix algebra $\M_2(\mathbb{C})$, then
\begin{equation}
\label{eq:Dixmier-spin}
\norm{x}_{\L^{p,D}(\M_2(\mathbb{C}),\tau)}
\ov{\eqref{Dixmier-NC-spaces}}{=}
\big(\tau(|x|^p)\big)^{\frac1p}
=
\bigg(
\frac{(m-s)^{\frac p2}+(m+s)^{\frac p2}}{2}
\bigg)^{\frac1p}.
\end{equation}
The nonassociative $\L^p$-norm on the complex spin factor $\Sym_2(\mathbb{C})$ does not coincide in general with the norm induced by the ambient Dixmier noncommutative $\L^p$-space of $\M_2(\mathbb{C})$. Indeed, we will prove in Proposition \ref{prop-spin-comparison} that the two norms coincide exactly when the element $x$ is normal in $\M_2(\mathbb{C})$.
\end{example}

The next result is a consequence of Theorem \ref{Th-comparison-interpolation}. Here, we provide an alternative elementary proof.

\begin{prop}
\label{prop-spin-comparison}
\begin{enumerate}
\item If $1 \leq p \leq 2$, then for every $x \in \Sym_2(\mathbb{C})$,
\[
\norm{x}_{\L^{p,D}(\M_2(\mathbb{C}),\tau)}
\leq
\norm{x}_{\L^p(\Sym_2(\mathbb{C}))}
\leq
2^{\frac1p-\frac12}
\norm{x}_{\L^{p,D}(\M_2(\mathbb{C}),\tau)}.
\]

\item If $2 \leq p < \infty$, then for every $x \in \Sym_2(\mathbb{C})$,
\[
\norm{x}_{\L^p(\Sym_2(\mathbb{C}))}
\leq
\norm{x}_{\L^{p,D}(\M_2(\mathbb{C}),\tau)}
\leq
2^{\frac12-\frac1p}
\norm{x}_{\L^p(\Sym_2(\mathbb{C}))}.
\]
Moreover, the constants $2^{\frac1p-\frac12}$ and $2^{\frac12-\frac1p}$ are optimal.
\end{enumerate}
\end{prop}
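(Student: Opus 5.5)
The plan is to work entirely with the explicit formulas \eqref{une-autre-egalite} and \eqref{eq:Dixmier-spin}. For $x=\alpha\I+\beta\sigma_1+\gamma\sigma_2$ we have the same $m$ in both, with $r=\sqrt{u^2+v^2}\leq s=\sqrt{u^2+v^2+w^2}\leq m$. We then study the single-variable function
\[
F(t)\ov{\mathrm{def}}{=}\frac{(m-t)^{\frac p2}+(m+t)^{\frac p2}}{2},\qquad t\in[0,m],
\]
so that $\norm{x}_{\L^p(\Sym_2(\mathbb{C}))}^p=F(r)$ and $\norm{x}_{\L^{p,D}(\M_2(\mathbb{C}),\tau)}^p=F(s)$. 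The derivative $F'(t)=\frac p4\big[(m+t)^{\frac p2-1}-(m-t)^{\frac p2-1}\big]$ is $\leq 0$ when $p\leq 2$ and $\geq 0$ when $p\geq 2$. Since $r\leq s$, this monotonicity immediately gives the first inequality in each case: $F(s)\leq F(r)$ for $p\leq 2$ and $F(r)\leq F(s)$ for $p\geq 2$.

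For the constants we compare $F(0)=m^{\frac p2}$ with $F(m)=2^{\frac p2-1}m^{\frac p2}$, which are the extreme values of the monotone function $F$ on $[0,m]$. For $p\leq 2$ this gives
\[
F(r)\leq F(0)=m^{\frac p2}=2^{1-\frac p2}F(m)\leq 2^{1-\frac p2}F(s),
\]
and taking $p$th roots yields the constant $2^{\frac1p-\frac12}$. For $p\geq 2$ the symmetric chain $F(s)\leq F(m)=2^{\frac p2-1}F(0)\leq 2^{\frac p2-1}F(r)$ gives $2^{\frac12-\frac1p}$. Alternatively, these inequalities follow directly from Proposition \ref{prop-ine-von-Neumann-algebras}, since the Jordan product on $\Sym_2(\mathbb{C})$ is the restriction of that of $\M_2(\mathbb{C})$ and $\tau=\frac12\Tr$. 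The elementary route is preferred only as a self-contained check.

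For optimality, we need $x\in\Sym_2(\mathbb{C})$ with $r=0$ and $s=m$. We take $x=\sigma_1+\i\sigma_2=\begin{bmatrix}1&\i\\ \i&-1\end{bmatrix}$, which is symmetric. Here $\alpha=0$, so $u=v=0$ and $r=0$; also $\beta=1$, $\gamma=\i$, so $m=2$ and $w=2\Im(\i)=2$, giving $s=2=m$. Then $x^*\circ x=2\I$ while $x^*x$ has eigenvalues $0$ and $4$, so both ratios equal the claimed constants exactly. The only real obstacle is finding this extremal symmetric element: the matrix unit $e_{12}$ from Remark \ref{remark-optimal} is not symmetric, so we need a nilpotent symmetric matrix. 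The choice $\sigma_1+\i\sigma_2$, which satisfies $x^2=0$, is the natural candidate, and checking it is routine.
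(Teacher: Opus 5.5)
Your proposal is correct and follows the paper's proof essentially step by step. You use the same single-variable function (the paper works with $f_m=F^{1/p}$), the same derivative sign argument combined with $r\leq s\leq m$, the same endpoint comparison $F(0)=2^{1-\frac p2}F(m)$, and the same nilpotent extremal element $x_0=\sigma_1+\i\sigma_2$, for which $r=0$ and $s=m=2$.
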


\begin{proof}
Consider the function $f_m \co [0,m] \to \R$ defined by
\begin{equation}
\label{def-de-fm}
f_{m}(t)
\ov{\mathrm{def}}{=}
\bigg(
\frac{(m-t)^{\frac p2}+(m+t)^{\frac p2}}{2}
\bigg)^{\frac1p}, \quad  t \in [0,m].
\end{equation}
Then $
\norm{x}_{\L^p(\Sym_2(\mathbb{C}))}
\ov{\eqref{une-autre-egalite}\eqref{def-de-fm}}{=} f_{m}(r)$ and $ 
\norm{x}_{\L^{p,D}(\M_2(\mathbb{C}),\tau)}\ov{\eqref{eq:Dixmier-spin} \eqref{def-de-fm}}{=} f_{m}(s)$. Moreover, we see that
\begin{equation}
\label{quelques-calculs}
f_{m}(0)=m^{\frac12}
\qquad \text{and} \qquad
f_{m}(m)
=2^{\frac12-\frac1p}m^{\frac12}.
\end{equation}
We have
\[
\frac{\d}{\d t}\big[f_{m}(t)^p\big]
\ov{\eqref{def-de-fm}}{=} \frac{\d}{\d t}\bigg[ \frac{(m-t)^{\frac p2}+(m+t)^{\frac p2}}{2}\bigg]
=\frac p4\Big((m+t)^{\frac p2-1}-(m-t)^{\frac p2-1}\Big).
\]
Assume first that $1 \leq p \leq 2$. Then $\frac p2-1 \leq 0$. Hence $(m+t)^{\frac p2-1} \leq (m-t)^{\frac p2-1}$. We deduce that the function $f_{m}$ is decreasing on $[0,m]$. Since $r \leq s$, we get
\[
\norm{x}_{\L^{p,D}(\M_2(\mathbb{C}),\tau)}
\ov{\eqref{eq:Dixmier-spin} \eqref{def-de-fm}}{=} f_{m}(s)
\leq f_{m}(r) \ov{\eqref{une-autre-egalite}\eqref{def-de-fm}}{=} \norm{x}_{\L^p(\Sym_2(\mathbb{C}))}.
\]
Moreover, since $f_{m}$ is decreasing, we have
\begin{align*}
\MoveEqLeft
\norm{x}_{\L^p(\Sym_2(\mathbb{C}))}
=f_{m}(r) 
\leq f_{m}(0)
\ov{\eqref{quelques-calculs}}{=}
m^{\frac{1}{2}} \\
&\ov{\eqref{quelques-calculs}}{=} 2^{\frac1p-\frac12}f_{m}(m)
\leq 2^{\frac1p-\frac12}f_{m}(s)
=2^{\frac1p-\frac12}
\norm{x}_{\L^{p,D}(\M_2(\mathbb{C}),\tau)}.         
\end{align*}
Now, assume that $2 \leq p < \infty$. Then $\frac p2-1 \geq 0$. Hence $
(m+t)^{\frac p2-1} \geq (m-t)^{\frac p2-1}$. We infer that $f_{m}$ is increasing on $[0,m]$. Since $r \leq s$, we obtain
\[
\norm{x}_{\L^p(\Sym_2(\mathbb{C}))} 
\ov{\eqref{une-autre-egalite}\eqref{def-de-fm}}{=} f_{m}(r)
\leq f_{m}(s) \ov{\eqref{eq:Dixmier-spin} \eqref{def-de-fm}}{=} \norm{x}_{\L^{p,D}(\M_2(\mathbb{C}),\tau)}.
\]
Furthermore, we have
\begin{align*}
\MoveEqLeft
\norm{x}_{\L^{p,D}(\M_2(\mathbb{C}),\tau)}
=f_{m}(s)\leq f_{m}(m)
\ov{\eqref{quelques-calculs}}{=} 2^{\frac12-\frac1p}m^{\frac12}
\ov{\eqref{quelques-calculs}}{=}
2^{\frac12-\frac1p}f_{m}(0) \\
&\leq
2^{\frac12-\frac1p}f_{m}(r)
=2^{\frac12-\frac1p}
\norm{x}_{\L^p(\Sym_2(\mathbb{C}))}.         
\end{align*}
It remains to prove optimality. Consider the element $
x_0
\ov{\mathrm{def}}{=}
\sigma_1+\i \sigma_2$. Then $
\alpha=0$, $\beta=1$, $\gamma=\i$. So we have $
m \ov{\eqref{def-de-m}}{=} 2$, $
u \ov{\eqref{def-de-m}}{=} 0$, $v \ov{\eqref{def-de-m}}{=} 0$, $w=2$, $
r \ov{\eqref{def-de-r}}{=} 0$ and $
s \ov{\eqref{eq:def-s-spin}}{=} 2$. Hence $
\norm{x_0}_{\L^p(\Sym_2(\mathbb{C}))}
=
f_{2}(0)
=
\sqrt{2}$, while $
\norm{x_0}_{\L^{p,D}(\M_2(\mathbb{C}),\tau)}
=
f_{2}(2)
=
2^{1-\frac1p}$. Therefore
\[
\frac{\norm{x_0}_{\L^p(\Sym_2(\mathbb{C}))}}
{\norm{x_0}_{\L^{p,D}(\M_2(\mathbb{C}),\tau)}}
=
2^{\frac1p-\frac12}
\quad \text{and} \quad 
\frac{\norm{x_0}_{\L^{p,D}(\M_2(\mathbb{C}),\tau)}}
{\norm{x_0}_{\L^p(\Sym_2(\mathbb{C}))}}
=
2^{\frac12-\frac1p}.
\]
Thus the constants are optimal.
\end{proof}

\begin{prop}
\label{prop-spin-comparison-bis}
Suppose that $1 \leq p < \infty$ and $p \ne 2$. For any $x \in \Sym_2(\mathbb{C})$, the two norms $\norm{x}_{\L^p(\Sym_2(\mathbb{C}))}$ and $\norm{x}_{\L^{p,D}(\M_2(\mathbb{C}),\tau)}$ coincide if and only if the matrix $x$ is normal in $\M_2(\mathbb{C})$.
\end{prop}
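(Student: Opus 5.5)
The plan is to reuse the explicit formulas of Example \ref{ex-sym-2x2}. With the notation $m,u,v$ of \eqref{def-de-m}, $r$ of \eqref{def-de-r}, $w=2\Im(\ovl{\beta}\gamma)$ and $s$ of \eqref{eq:def-s-spin}, we have
$$
\norm{x}_{\L^p(\Sym_2(\mathbb{C}))}=f_m(r)
\quad \text{and} \quad
\norm{x}_{\L^{p,D}(\M_2(\mathbb{C}),\tau)}=f_m(s),
$$
where $f_m$ is the function \eqref{def-de-fm}, and $0 \leq r \leq s \leq m$ (positivity of $x^*x$ gives $s \leq m$). So the equivalence reduces to two claims. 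First, $f_m(r)=f_m(s)$ if and only if $r=s$, that is, $w=0$. Second, $w=0$ if and only if $x$ is normal.

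For the first claim, I would show that $f_m$ is strictly monotone on $[0,m]$ when $p\neq 2$ and $m>0$. The derivative of $f_m^p$ computed in the proof of Proposition \ref{prop-spin-comparison} is
$$
\frac p4\Big((m+t)^{\frac p2-1}-(m-t)^{\frac p2-1}\Big).
$$
This is nonzero for every $t\in(0,m)$, because $t\mapsto t^{\frac p2-1}$ is strictly monotone when $p\neq 2$. The endpoint $t=m$ needs a little care for $p<2$, where the derivative blows up; but continuity of $f_m$ on $[0,m]$ together with strict monotonicity on the open interval still gives strict monotonicity on the closed interval. Hence $f_m$ is injective on $[0,m]$. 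The degenerate case $m=0$ means $x=0$, which is trivially normal, and then both norms vanish. Therefore the two norms coincide if and only if $r=s$. By the definitions, $s^2=r^2+w^2$, so $r=s$ is equivalent to $w=0$.

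For the second claim, I would compute $xx^*$ in the same way as $x^*x$ was computed in \eqref{egalite-sans-fin-4}. Exchanging the roles of $x$ and $x^*$ amounts to replacing $(\alpha,\beta,\gamma)$ by $(\ovl{\alpha},\ovl{\beta},\ovl{\gamma})$. This leaves $m$, $u$ and $v$ unchanged and turns $\Im(\ovl{\beta}\gamma)$ into $\Im(\beta\ovl{\gamma})=-\Im(\ovl{\beta}\gamma)$, so
$$
xx^*=m\I+u\sigma_1+v\sigma_2-w\sigma_3.
$$
Consequently $x^*x-xx^*=2w\sigma_3$. Since $\sigma_3\neq 0$, $x$ is normal if and only if $w=0$.

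Combining the two claims proves the statement. The only delicate point is the strict monotonicity of $f_m$, in particular near the endpoint $t=m$ when $p<2$, and this is handled by the continuity argument above. As a consistency check, the normal case is already covered by Proposition \ref{prop-ine-von-Neumann-algebras}(4), which also gives the \emph{if} direction directly.
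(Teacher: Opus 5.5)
Your proposal is correct and follows essentially the same route as the paper's proof: compute $xx^*$ to get $x^*x-xx^*=2w\sigma_3$, then combine the strict monotonicity of $f_m$ (for $p\neq 2$) with $s^2-r^2=w^2$. Your extra care with the endpoint $t=m$ when $p<2$ and with the degenerate case $m=0$ fills in details the paper leaves implicit.
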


\begin{proof}
Consider any element $x=\alpha \I+\beta \sigma_1+\gamma \sigma_2$ with $\alpha,\beta,\gamma \in \mathbb{C}$. By a computation similar to \eqref{egalite-sans-fin-4}, we have
\begin{equation}
\label{calcul-x-x*}
xx^*
=(\alpha \I+\beta \sigma_1+\gamma \sigma_2)(\alpha \I+\beta \sigma_1+\gamma \sigma_2)^*
=m\I+u\sigma_1+v\sigma_2-w\sigma_3.
\end{equation}
Hence $
x^*x-xx^*
\ov{\eqref{eq:x*x-spin} \eqref{calcul-x-x*}}{=} 
2w\sigma_3$, where $w=2\Im(\ovl{\beta}\gamma)$. We infer that
\[
x^*x=xx^*
\quad \Longleftrightarrow \quad
w=0
\quad \Longleftrightarrow \quad
\Im(\ovl{\beta}\gamma)=0.
\]
This proves that the matrix $x$ is normal if and only if $\Im(\ovl{\beta}\gamma)=0$. Since the two norms are given by \eqref{une-autre-egalite} and \eqref{eq:Dixmier-spin}, they coincide if and only if $r=s$ (since the function \eqref{def-de-fm} is strictly increasing or strictly decreasing). Since $s^2-r^2 \ov{\eqref{def-de-r} \eqref{eq:def-s-spin}}{=} w^2$, we have $r=s$ if and only if $w=0$, hence if and only if $x$ is normal.
\end{proof}

\begin{remark} \normalfont
\label{remark-compatibility-spin}
The formula \eqref{une-autre-egalite} obtained in Example \ref{ex-sym-2x2} is of course compatible with \eqref{eq:Lp-general-spin}. Indeed, consider an element of the abstract spin factor $\cal{V}$ of the form $x=\lambda 1+h$, where $\lambda \in \mathbb{C}$, $h = s e_1+t e_2$, with $s,t \in \mathbb{R}$. We have 
\begin{equation}
\label{une-autre-egalite-bis}
\norm{h}_2
=\sqrt{s^2+t^2}.
\end{equation}
Under the Jordan $*$-isomorphism $J \co \cal V \to \Sym_2(\mathbb{C})$ the element $x$ corresponds to $
J(x)=\lambda \I+s\sigma_1+t\sigma_2$. Hence, in the notation of Example \ref{ex-sym-2x2}, we have $\alpha=\lambda$, $\beta=s$ and $\gamma=t$. If we set $\rho \ov{\mathrm{def}}{=} \norm{h}_2$, then
\begin{equation}
\label{1234ERT}
m
\ov{\eqref{def-de-m}}{=} |\lambda|^2 + s^2 + t^2
\ov{\eqref{une-autre-egalite-bis}}{=} |\lambda|^2 + \norm{h}_2^2
=|\lambda|^2+\rho^2
\end{equation}
and
\begin{equation}
\label{446YTRR}
r
\ov{\eqref{def-de-r}}{=}
2\sqrt{\Re(\ovl{\lambda}s)^2+\Re(\ovl{\lambda}t)^2}
=2\sqrt{(\Re\ovl{\lambda})^2 s^2+(\Re \ovl{\lambda})^2 t^2}
\ov{\eqref{une-autre-egalite-bis}}{=} 2|\Re(\lambda)| \norm{h}_2
=2|\Re(\lambda)| \rho.
\end{equation}
It follows that the unordered pair $\{m-r,m+r\}$ coincides with $
\{|\lambda-\rho|^2, |\lambda+\rho|^2\}$. Indeed, if $\Re(\lambda) \geq 0$, then
\begin{equation}
\label{ine-fin-33}
m \pm r
\ov{\eqref{1234ERT} \eqref{446YTRR}}{=} |\lambda|^2+\rho^2 \pm 2|\Re(\lambda)| \rho
=|\lambda|^2+\rho^2 \pm 2\Re(\lambda \rho)
=|\lambda \pm \rho|^2,
\end{equation}
whereas if $\Re(\lambda) \leq 0$, the two terms are interchanged. Consequently, we have
\begin{equation}
\label{une-autre-egalite-bis-2}
\frac{(m-r)^{\frac p2}+(m+r)^{\frac p2}}{2}
\ov{\eqref{ine-fin-33}}{=}
\frac{|\lambda-\rho|^p+|\lambda+\rho|^p}{2}.
\end{equation}
Hence the formula of Example \ref{ex-sym-2x2} becomes
\[
\norm{x}_{\L^p(\cal M)}
\ov{\eqref{une-autre-egalite}}{=}\bigg(
\frac{(m-r)^{\frac{p}{2}}+(m+r)^{\frac{p}{2}}}{2}
\bigg)^{\frac{1}{p}}
\ov{\eqref{une-autre-egalite-bis-2}}{=} \bigg(\frac{|\lambda+\norm{h}_2|^p+|\lambda-\norm{h}_2|^p}{2}\bigg)^{\frac1p},
\]
which is exactly \eqref{eq:Lp-general-spin}.
\end{remark}


\begin{example} \normalfont
\label{Ex-dim-4}
Recall that the Jordan algebra $\M_2(\mathbb{C})$ (equipped with the Jordan product defined in \eqref{Jordan-product}) is isomorphic to the complex spin factor of dimension $4$. More precisely, consider the matrices
\begin{equation}
\label{Pauli-four}
\I
\ov{\mathrm{def}}{=}
\begin{bmatrix}
1 & 0\\
0 & 1
\end{bmatrix},
\qquad
\sigma_1
\ov{\mathrm{def}}{=}
\begin{bmatrix}
1 & 0\\
0 & -1
\end{bmatrix},
\qquad
\sigma_2
\ov{\mathrm{def}}{=}
\begin{bmatrix}
0 & 1\\
1 & 0
\end{bmatrix},
\qquad
\sigma_3
\ov{\mathrm{def}}{=}
\begin{bmatrix}
0 & \i\\
-\i & 0
\end{bmatrix}.
\end{equation}
These matrices form a basis of the vector space $\M_2(\mathbb{C})$ and satisfy
\begin{equation}
\label{Pauli-relations-bis}
\sigma_j^*=\sigma_j,
\qquad
\sigma_j \circ \sigma_j=\I,
\qquad
\sigma_j \circ \sigma_k=0
\quad \text{for } j \not= k.
\end{equation}
Let $
\cal{V}
\ov{\mathrm{def}}{=}
\mathbb{C}1 \oplus \mathbb{C}e_1 \oplus \mathbb{C}e_2 \oplus \mathbb{C}e_3$ be the abstract complex spin factor of dimension $4$, where $\{1,e_1,e_2,e_3\}$ is an orthonormal basis of the underlying Hilbert space and the conjugation is given by
\[
\overline{\lambda 1 + z_1 e_1 + z_2 e_2 + z_3 e_3}
\ov{\mathrm{def}}{=}
\overline{\lambda}1-\overline{z_1}e_1-\overline{z_2}e_2-\overline{z_3}e_3.
\]
In particular, we have
\[
e_j^*=-\overline{e_j}=e_j,
\qquad
j=1,2,3.
\]
Hence the elements $e_1,e_2,e_3$ are selfadjoint. Moreover, we have
\[
e_j \circ e_j \ov{\eqref{product-complex-spin-factor}}{=} 1
\quad \text{and} \quad
e_j \circ e_k \ov{\eqref{product-complex-spin-factor}}{=} 0
\quad \text{for } j \not= k.
\]
Consequently the linear map $
J \co \cal{V} \to \M_2(\mathbb{C})$, $
\lambda 1 + z_1 e_1 + z_2 e_2 + z_3 e_3
\mapsto
\lambda \I + z_1 \sigma_1 + z_2 \sigma_2 + z_3 \sigma_3$  is an isomorphism of $\JBW^*$-algebras. The associated $\JBW$-algebra is the space $\M_2(\mathbb{C})_{\sa}$ of selfadjoint $2 \times 2$ complex matrices, equipped with the operator norm and the Jordan product. This $\JBW$-algebra is a real spin factor of dimension $4$. We equip $\M_2(\mathbb{C})$ with the normalized trace
\[
\tau(x)
\ov{\mathrm{def}}{=}
\frac{1}{2}\tr(x),
\qquad x \in \M_2(\mathbb{C}).
\]
Then $
\tau(\I)=1$, $\tau(\sigma_1)=\tau(\sigma_2)=\tau(\sigma_3)=0$. Hence this trace coincides with the complexified canonical trace of the abstract spin factor. Consider an element $x$ of $\M_2(\mathbb{C})$ of the form
\begin{equation}
\label{divers-6789}
x
=\alpha \I+\beta_1 \sigma_1+\beta_2 \sigma_2+\beta_3 \sigma_3,
\qquad
\text{where }\alpha,\beta_1,\beta_2,\beta_3 \in \mathbb{C}.
\end{equation}
Since the elements $\I,\sigma_1,\sigma_2$ and $\sigma_3$ are selfadjoint, we have $
x^*
\ov{\eqref{divers-6789}}{=}
\overline{\alpha}\I+\overline{\beta_1}\sigma_1+\overline{\beta_2}\sigma_2+\overline{\beta_3}\sigma_3$. Using the commutation rules \eqref{Pauli-relations-bis}, we obtain
\begin{align}
\MoveEqLeft
\label{inter-infty-67}
x^* \circ x 
=(\overline{\alpha}\I+\overline{\beta_1}\sigma_1+\overline{\beta_2}\sigma_2+\overline{\beta_3}\sigma_3)
\circ
(\alpha \I+\beta_1 \sigma_1+\beta_2 \sigma_2+\beta_3 \sigma_3) \\
&=
\overline{\alpha}\alpha (\I \circ \I)
+\overline{\alpha}\beta_1 (\I \circ \sigma_1)
+\overline{\alpha}\beta_2 (\I \circ \sigma_2)
+\overline{\alpha}\beta_3 (\I \circ \sigma_3) \nonumber\\
&\quad
+\overline{\beta_1}\alpha (\sigma_1 \circ \I)
+\overline{\beta_1}\beta_1 (\sigma_1 \circ \sigma_1)
+\overline{\beta_1}\beta_2 (\sigma_1 \circ \sigma_2)
+\overline{\beta_1}\beta_3 (\sigma_1 \circ \sigma_3) \nonumber\\
&\quad
+\overline{\beta_2}\alpha (\sigma_2 \circ \I)
+\overline{\beta_2}\beta_1 (\sigma_2 \circ \sigma_1)
+\overline{\beta_2}\beta_2 (\sigma_2 \circ \sigma_2)
+\overline{\beta_2}\beta_3 (\sigma_2 \circ \sigma_3) \nonumber\\
&\quad
+\overline{\beta_3}\alpha (\sigma_3 \circ \I)
+\overline{\beta_3}\beta_1 (\sigma_3 \circ \sigma_1)
+\overline{\beta_3}\beta_2 (\sigma_3 \circ \sigma_2)
+\overline{\beta_3}\beta_3 (\sigma_3 \circ \sigma_3) \nonumber\\
&\ov{\eqref{Pauli-relations-bis}}{=}
|\alpha|^2 \I
+\overline{\alpha}\beta_1 \sigma_1
+\overline{\alpha}\beta_2 \sigma_2
+\overline{\alpha}\beta_3 \sigma_3 
+\overline{\beta_1}\alpha \sigma_1
+|\beta_1|^2 \I
+0
+0 \nonumber\\
&\quad
+\overline{\beta_2}\alpha \sigma_2
+0
+|\beta_2|^2 \I
+0 
+\overline{\beta_3}\alpha \sigma_3
+0
+0
+|\beta_3|^2 \I \nonumber\\
&=
\bigl(|\alpha|^2+|\beta_1|^2+|\beta_2|^2+|\beta_3|^2\bigr)\I 
+\bigl(\overline{\alpha}\beta_1+\overline{\beta_1}\alpha\bigr)\sigma_1
+\bigl(\overline{\alpha}\beta_2+\overline{\beta_2}\alpha\bigr)\sigma_2
+\bigl(\overline{\alpha}\beta_3+\overline{\beta_3}\alpha\bigr)\sigma_3 \nonumber\\
&=\bigl(|\alpha|^2+|\beta_1|^2+|\beta_2|^2+|\beta_3|^2\bigr)\I 
+2\Re(\overline{\alpha}\beta_1)\sigma_1
+2\Re(\overline{\alpha}\beta_2)\sigma_2
+2\Re(\overline{\alpha}\beta_3)\sigma_3. \nonumber
\end{align}
Set
\begin{equation}
\label{eq:def-muj-spin4}
m
\ov{\mathrm{def}}{=}
|\alpha|^2+|\beta_1|^2+|\beta_2|^2+|\beta_3|^2,
\qquad
u_j
\ov{\mathrm{def}}{=}
2\Re(\overline{\alpha}\beta_j),
\quad j=1,2,3,
\end{equation}
and
\begin{equation}
\label{eq:def-r-spin4}
r
\ov{\mathrm{def}}{=}
\sqrt{u_1^2+u_2^2+u_3^2}.
\end{equation}
Then
\begin{equation}
\label{inter-23456}
x^* \circ x
\ov{\eqref{inter-infty-67}\eqref{eq:def-muj-spin4}}{=}
m\I+u_1\sigma_1+u_2\sigma_2+u_3\sigma_3
=
\begin{bmatrix}
m+u_1 & u_2+\i u_3\\
u_2-\i u_3 & m-u_1
\end{bmatrix}.
\end{equation}
Hence the characteristic polynomial of $x^* \circ x$ is
\begin{align*}
\MoveEqLeft
\chi_{x^* \circ x}(t)
\ov{\eqref{inter-23456}}{=}
\det\begin{bmatrix}
m+u_1-t & u_2+\i u_3\\
u_2-\i u_3 & m-u_1-t
\end{bmatrix} 
=(m+u_1-t)(m-u_1-t)-(u_2+\i u_3)(u_2-\i u_3) \\
&= (m-t)^2-u_1^2-u_2^2-u_3^2 
\ov{\eqref{eq:def-r-spin4}}{=} (m-t)^2-r^2.
\end{align*}
Therefore the eigenvalues of $x^* \circ x$ are $m-r$ and $m+r$. Since $x^* \circ x$ is positive, both numbers are positive. Consequently, we have
\begin{equation}
\label{eq:Lp-spin4}
\norm{x}_{\L^p(\M_2(\mathbb{C}))}
\ov{\eqref{norm-LpNA}}{=}
\big(\tau \big[(x^* \circ x)^{\frac{p}{2}}\big]\big)^{\frac{1}{p}}
=\big(\tfrac{1}{2}\Tr \big[(x^* \circ x)^{\frac{p}{2}}\big]\big)^{\frac{1}{p}}
=\bigg(
\frac{(m-r)^{\frac p2}+(m+r)^{\frac p2}}{2}
\bigg)^{\frac1p}.
\end{equation}

Now, we compute the Dixmier norm. Using the equalities $\sigma_1^2=\sigma_2^2=\sigma_3^2=\I$, $\sigma_1\sigma_2=-\i \sigma_3$, 
$\sigma_2\sigma_1=\i \sigma_3$, $\sigma_1\sigma_3=\i \sigma_2$, $\sigma_3\sigma_1=-\i \sigma_2$, $
\sigma_2\sigma_3=-\i \sigma_1$, $\sigma_3\sigma_2=\i \sigma_1$ and expanding term by term, we get
\begin{align*}
\MoveEqLeft
x^*x 
=(\overline{\alpha}\I+\overline{\beta_1}\sigma_1+\overline{\beta_2}\sigma_2+\overline{\beta_3}\sigma_3)
(\alpha \I+\beta_1 \sigma_1+\beta_2 \sigma_2+\beta_3 \sigma_3) \\
&=
|\alpha|^2 \I
+\overline{\alpha}\beta_1 \sigma_1
+\overline{\alpha}\beta_2 \sigma_2
+\overline{\alpha}\beta_3 \sigma_3 
+\overline{\beta_1}\alpha \sigma_1
+\overline{\beta_1}\beta_1 \sigma_1^2
+\overline{\beta_1}\beta_2 \sigma_1\sigma_2
+\overline{\beta_1}\beta_3 \sigma_1\sigma_3 \\
&\quad
+\overline{\beta_2}\alpha \sigma_2
+\overline{\beta_2}\beta_1 \sigma_2\sigma_1
+\overline{\beta_2}\beta_2 \sigma_2^2
+\overline{\beta_2}\beta_3 \sigma_2\sigma_3 
+\overline{\beta_3}\alpha \sigma_3
+\overline{\beta_3}\beta_1 \sigma_3\sigma_1
+\overline{\beta_3}\beta_2 \sigma_3\sigma_2
+\overline{\beta_3}\beta_3 \sigma_3^2 \\
&=
|\alpha|^2 \I
+\overline{\alpha}\beta_1 \sigma_1
+\overline{\alpha}\beta_2 \sigma_2
+\overline{\alpha}\beta_3 \sigma_3 
+\overline{\beta_1}\alpha \sigma_1
+|\beta_1|^2 \I
+\overline{\beta_1}\beta_2 (-\i \sigma_3)
+\overline{\beta_1}\beta_3 (\i \sigma_2) \\
&\quad
+\overline{\beta_2}\alpha \sigma_2
+\overline{\beta_2}\beta_1 (\i \sigma_3)
+|\beta_2|^2 \I
+\overline{\beta_2}\beta_3 (-\i \sigma_1) 
+\overline{\beta_3}\alpha \sigma_3
+\overline{\beta_3}\beta_1 (-\i \sigma_2)
+\overline{\beta_3}\beta_2 (\i \sigma_1)
+|\beta_3|^2 \I.
\end{align*}
The coefficient of $\I$ is $
m
\ov{\eqref{eq:def-muj-spin4}}{=}
|\alpha|^2+|\beta_1|^2+|\beta_2|^2+|\beta_3|^2$. Moreover, the coefficient of $\sigma_1$ is
\[
c_1
\ov{\mathrm{def}}{=} \overline{\alpha}\beta_1+\overline{\beta_1}\alpha
-\i \overline{\beta_2}\beta_3+\i \overline{\beta_3}\beta_2
=
2\Re(\alpha\overline{\beta_1})
+2\Im(\overline{\beta_2}\beta_3).
\]
The coefficient of $\sigma_2$ is given by
\[
c_2
\ov{\mathrm{def}}{=} \overline{\alpha}\beta_2+\overline{\beta_2}\alpha
+\i \overline{\beta_1}\beta_3-\i \overline{\beta_3}\beta_1
=
2\Re(\alpha\overline{\beta_2})
-2\Im(\overline{\beta_1}\beta_3).
\]
Finally, the coefficient of $\sigma_3$ is
\[
c_3
\ov{\mathrm{def}}{=} \overline{\alpha}\beta_3+\overline{\beta_3}\alpha
-\i \overline{\beta_1}\beta_2+\i \overline{\beta_2}\beta_1
=
2\Re(\alpha\overline{\beta_3})
+2\Im(\overline{\beta_1}\beta_2).
\]
Using the explicit matrices, we obtain
\[
x^*x
= m\I+c_1\sigma_1+c_2\sigma_2+c_3\sigma_3
\ov{\eqref{Pauli-four}}{=}
\begin{bmatrix}
m+c_1 & c_2+\i c_3\\
c_2-\i c_3 & m-c_1
\end{bmatrix}.
\]
Consequently, the characteristic polynomial is given by
\begin{align*}
\chi_{x^*x}(t)
&\ov{\mathrm{def}}{=}
\det \begin{bmatrix}
m+c_1 -t & c_2+\i c_3\\
c_2-\i c_3 & m-c_1 -t
\end{bmatrix} 
= (m+c_1-t)(m-c_1-t)-(c_2+\i c_3)(c_2-\i c_3) \\
&= (t-m)^2-c_1^2-(c_2^2+c_3^2) 
= (t-m)^2-(c_1^2+c_2^2+c_3^2).
\end{align*}
If we set $
s
\ov{\mathrm{def}}{=}
\sqrt{c_1^2+c_2^2+c_3^2}$ then $
\chi_{x^*x}(t)=(t-m)^2-s^2$. It follows that the eigenvalues of $x^*x$ are $m-s$ and $m+s$. Since $x^*x$ is positive in $\M_2(\mathbb{C})$, both numbers $m-s$ and $m+s$ are nonnegative. Therefore
\begin{equation}
\label{eq:Dixmier-spin4}
\norm{x}_{\L^{p,D}(\M_2(\mathbb{C}),\tau)}
\ov{\eqref{Dixmier-NC-spaces}}{=}
\big(\tau(|x|^p)\big)^{\frac1p}
=
\bigg(
\frac{(m-s)^{\frac p2}+(m+s)^{\frac p2}}{2}
\bigg)^{\frac1p}.
\end{equation}
\end{example}

\begin{example} \normalfont
\label{ex-H2H-spin} 
Let $\mathbb{H}$ be the skew field of quaternions, with standard basis $(1,\i,\j,\k)$ and conjugation
\[
\overline{a+b\,\i+c\,\j+d\,\k}
\ov{\mathrm{def}}{=}
a-b\,\i-c\,\j-d\,\k.
\]
Consider the real Jordan algebra $
\H_2(\mathbb{H})
\ov{\mathrm{def}}{=}
\bigg\{
\begin{bmatrix}
a & q \\
\ovl{q} & b
\end{bmatrix}
: a,b \in \R, \ q \in \mathbb{H}
\bigg\}$, endowed with the Jordan product introduced in \eqref{Jordan-product}. We introduce the following elements of $\H_2(\mathbb{H})$:
\[
\I
\ov{\mathrm{def}}{=}
\begin{bmatrix}
1 & 0\\
0 & 1
\end{bmatrix},
\qquad
s_1
\ov{\mathrm{def}}{=}
\begin{bmatrix}
1 & 0\\
0 & -1
\end{bmatrix},
\qquad
s_2
\ov{\mathrm{def}}{=}
\begin{bmatrix}
0 & 1\\
1 & 0
\end{bmatrix},
\]
\[
s_3
\ov{\mathrm{def}}{=}
\begin{bmatrix}
0 & \i\\
-\i & 0
\end{bmatrix},
\qquad
s_4
\ov{\mathrm{def}}{=}
\begin{bmatrix}
0 & \j\\
-\j & 0
\end{bmatrix},
\qquad
s_5
\ov{\mathrm{def}}{=}
\begin{bmatrix}
0 & \k\\
-\k & 0
\end{bmatrix}.
\]
These elements form a basis of the real vector space $\H_2(\mathbb{H})$. Moreover, they are selfadjoint and satisfy
\begin{equation}
\label{eq-quaternionic-spin-relations}
s_\ell \circ s_\ell=\I,
\qquad
s_\ell \circ s_m=0
\quad \text{for } \ell \not= m.
\end{equation}
Indeed, the first identity is immediate from $\i^2=\j^2=\k^2=-1$, and the second follows from the anticommutation relations in $\mathbb{H}$. Therefore $
\H_2(\mathbb{H})
=
\R\I \oplus \R s_1 \oplus \R s_2 \oplus \R s_3 \oplus \R s_4 \oplus \R s_5$ is a real spin factor of dimension $6$. We refer to \cite[Section 6.4]{HOS84} for more explanation.

Let $
\cal{V}
\ov{\mathrm{def}}{=}
\mathbb{C}1 \oplus \mathbb{C}e_1 \oplus \mathbb{C}e_2 \oplus \mathbb{C}e_3 \oplus \mathbb{C}e_4 \oplus \mathbb{C}e_5$ be the abstract complex spin factor of dimension $6$, where $\{1,e_1,\ldots,e_5\}$ is an orthonormal basis of the underlying Hilbert space and the conjugation is given by
\[
\overline{\lambda 1 + z_1 e_1 + \cdots + z_5 e_5}
\ov{\mathrm{def}}{=}
\overline{\lambda}1-\overline{z_1}e_1-\cdots-\overline{z_5}e_5.
\]
Then the linear map $
J \co \cal V \to \H_2(\mathbb{H})_{\mathbb{C}}$, 
$\lambda 1 + z_1 e_1 + \cdots + z_5 e_5
\mapsto
\lambda \I + z_1 s_1 + \cdots + z_5 s_5$ is an isomorphism of $\JBW^*$-algebras onto the complexification of $\H_2(\mathbb{H})$. We equip $\H_2(\mathbb{H})$ with the normalized trace
\[
\tau
\left(
\begin{bmatrix}
a & q\\
\overline{q} & b
\end{bmatrix}
\right)
\ov{\mathrm{def}}{=}
\frac{a+b}{2},
\]
and we still denote by $\tau$ its complex-linear extension to $\H_2(\mathbb{H})_{\mathbb{C}}$. Then
\[
\tau(\I)=1
\qquad \text{and} \quad
\tau(s_\ell)=0,
\quad \ell=1,\ldots,5.
\]
Hence this trace coincides with the complexified canonical trace of the abstract spin factor.
\end{example}

\begin{prop}
\label{prop-H2H-Lp}
Suppose that $1 \leq p < \infty$. Let $
x=\lambda \I+h$ be an element of $\H_2(\mathbb{H})_{\mathbb{C}}$, where $\lambda \in \mathbb{C}$ and
\[
h=t_1s_1+t_2s_2+t_3s_3+t_4s_4+t_5s_5
\]
with $t_1,\ldots,t_5 \in \mathbb{R}$. Set $
\rho
\ov{\mathrm{def}}{=}
\sqrt{t_1^2+t_2^2+t_3^2+t_4^2+t_5^2}$. Then
\begin{equation}
\label{eq-H2H-Lp}
\norm{x}_{\L^p(\H_2(\mathbb{H})_{\mathbb{C}})}
=
\bigg(
\frac{|\lambda+\rho|^p+|\lambda-\rho|^p}{2}
\bigg)^{\frac{1}{p}}.
\end{equation}
\end{prop}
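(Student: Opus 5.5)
The plan is to reduce the statement to Proposition \ref{prop:Lp-general-spin-factor}, using the Jordan $*$-isomorphism $J \co \cal{V} \to \H_2(\mathbb{H})_{\mathbb{C}}$ of Example \ref{ex-H2H-spin}, which is trace preserving. Since $J$ and $J^{-1}$ are normal $*$-Jordan isomorphisms preserving the traces, Proposition \ref{prop-homo-bis} (or directly the functional calculus argument in its proof, which only uses that the subalgebra generated by a selfadjoint element and $1$ is a commutative $\mathrm{C}^*$-algebra, valid in any $\JBW^*$-algebra) shows that $J$ is isometric for the spectral $\L^p$-norms. Note that $\H_2(\mathbb{H})_{\mathbb{C}}$ is realized as a $\JW^*$-algebra through the standard representation of quaternions as $2\times 2$ complex matrices, but this is not even needed: the isometry argument is purely algebraic.

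Next, I set $y \ov{\mathrm{def}}{=} J^{-1}(x)=\lambda 1+\sum_{\ell=1}^5 t_\ell e_\ell$ and write $y=\lambda 1+h'$ with $h'=\sum_\ell t_\ell e_\ell$. Since each $e_\ell$ is selfadjoint ($e_\ell^*=-\ovl{e_\ell}=e_\ell$) and the $t_\ell$ are real, $h'$ is selfadjoint. Moreover $h' \perp 1$ because $\{1,e_1,\ldots,e_5\}$ is orthonormal, and $\norm{h'}_2=\rho$ for the same reason. Proposition \ref{prop:Lp-general-spin-factor} then gives exactly \eqref{eq-H2H-Lp} for $\norm{y}_{\L^p(\cal{V})}$, and the isometry transfers it to $x$.

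As a sanity check, or as an alternative self-contained route, I would compute directly. Using \eqref{eq-quaternionic-spin-relations}, $h\circ h=\rho^2\I$, so $e=h/\rho$ is a symmetry (for $\rho\neq0$). The subalgebra generated by $\I$ and $e$ is then isomorphic to the two-dimensional algebra of Proposition \ref{prop-two-dimensional}, and that proposition yields the formula with $a=\rho$, $b=\lambda$. The case $\rho=0$ is trivial.

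The only delicate point is justifying that the spectral norm may be computed inside this subalgebra, or equivalently that $J$ is norm preserving in the spectral sense. This amounts to checking that continuous functional calculus is preserved by unital Jordan $*$-homomorphisms, which follows by approximating $t\mapsto t^{p/2}$ by polynomials.
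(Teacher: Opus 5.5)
Your proposal is correct. Your primary route (transferring through the trace-preserving isomorphism $J$ and invoking Proposition~\ref{prop:Lp-general-spin-factor}) only repackages the paper's argument. The paper's proof is exactly your ``alternative'' route: for $h \neq 0$ it sets $e = h/\rho$, notes that $e$ is a selfadjoint symmetry orthogonal to $\I$, and applies Proposition~\ref{prop-two-dimensional} to $x = \lambda \I + \rho e$ in the subalgebra generated by $\{\I, e\}$. This is the same two-dimensional reduction by which Proposition~\ref{prop:Lp-general-spin-factor} is itself proved.
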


\begin{proof}
If $h=0$, then $x=\lambda \I$ and the formula is obvious. Assume now that $h \not= 0$ and set $
e
\ov{\mathrm{def}}{=}
\frac{h}{\rho}$. By \eqref{eq-quaternionic-spin-relations}, the element $e$ is selfadjoint, orthogonal to $\I$, and satisfies $
e \circ e=\I$. Hence the Jordan $*$-subalgebra generated by $\{\I,e\}$ is a two-dimensional complex spin factor. Since
$
x=\lambda \I+\rho e$, 
Proposition~\ref{prop-two-dimensional} applied to the pair $(\I,e)$ yields
\[
\norm{x}_{\L^p(\H_2(\mathbb{H})_{\mathbb{C}})}
=
\bigg(
\frac{|\lambda+\rho|^p+|\lambda-\rho|^p}{2}
\bigg)^{\frac{1}{p}}.
\]
\end{proof}

\begin{example} \normalfont
\label{ex-H2O-spin}
We first recall some information on the algebra $\O$ of octonions. 
Recall that $\O$ is an eight-dimensional nonassociative real division algebra with basis $(1,e_1,e_2,e_3,e_4,e_5,e_6,e_7)$, whose multiplication is given in Figure~\ref{Table-multiplication-octonions}. More precisely, the entry in the $i$-th row and $j$-th column is the product of the basis element indexing the row by the basis element indexing the column.

\begin{figure}[ht]  
\centering
{\small   
\begin{tabular}{|c|c|c|c|c|c|c|c|c|}                    \hline   
      & $e_1$ & $e_2$ & $e_3$ & $e_4$  & $e_5$ & $e_6$ & $e_7$ \\ \hline   
  $e_1$ & $-1$  & $e_4$ & $e_7$ & $-e_2$ & $e_6$ & $-e_5$ & $-e_3$ \\ \hline   
$e_2$ & $-e_4$ & $-1$ & $e_5$ & $e_1$ & $-e_3$ & $e_7$ & $-e_6$     \\ \hline   
$e_3$ & $-e_7$ & $-e_5$ & $-1$ & $e_6$ & $e_2$ & $-e_4$ & $e_1$   \\ \hline   
$e_4$ & $e_2$ & $-e_1$ & $-e_6$ & $-1$ & $e_7$ & $e_3$ & $-e_5$   \\ \hline   
$e_5$ & $-e_6$ & $e_3$ & $-e_2$ & $-e_7$ & $-1$ & $e_1$ & $e_4$    \\ \hline   
$e_6$ & $e_5$ & $-e_7$ & $e_4$ & $-e_3$ & $-e_1$ & $-1$ & $e_2$     \\ \hline   
$e_7$ & $e_3$ & $e_6$ & $-e_1$ & $e_5$ & $-e_4$ & $-e_2$ & $-1$    \\  \hline   
\end{tabular}}    
\caption{Octonion multiplication table}
\label{Table-multiplication-octonions}
\end{figure}

Each octonion $x \in \O$ can be written as a linear combination
$$
x 
= x_0 + x_1 e_1 + x_2 e_2+ x_3 e_3+ x_4 e_4+ x_5 e_5+ x_6 e_6  + x_7 e_7, \qquad x_0,x_1, x_2,x_3,x_4,x_5,x_6,x_7 \in \R
$$
of the basis elements $1,e_1,e_2,e_3,e_4,e_5,e_6,e_7$. The conjugation is the operation that flips the signs of all the <<imaginary>> coefficients
$$
\ovl{x} 
\ov{\mathrm{def}}{=} x_0 - x_1 e_1 - x_2 e_2- x_3 e_3- x_4 e_4- x_5 e_5- x_6 e_6 - x_7 e_7.
$$
It is an involution that reverses the order of multiplication, that is, for any $x,y \in\mathbb{O}$, we have $\ovl{x y}=\ovl{y} \ovl{x}$. We refer to the paper \cite{Bae02} and to the book \cite{CoS03} for more information. 

Consider the real Jordan algebra $
\A
\ov{\mathrm{def}}{=}
\H_2(\O)
=
\bigg\{
\begin{bmatrix}
a & x\\
\overline{x} & b
\end{bmatrix}
: a,b \in \R,\ x \in \O
\bigg\}$, equipped with the Jordan product defined in \eqref{Jordan-product}. We introduce the following elements of $\A$:
\[
\I
\ov{\mathrm{def}}{=}
\begin{bmatrix}
1 & 0\\
0 & 1
\end{bmatrix},
\qquad
s_1
\ov{\mathrm{def}}{=}
\begin{bmatrix}
1 & 0\\
0 & -1
\end{bmatrix},
\qquad
s_2
\ov{\mathrm{def}}{=}
\begin{bmatrix}
0 & 1\\
1 & 0
\end{bmatrix}
\quad \text{and} \quad
s_{k+2}
\ov{\mathrm{def}}{=}
\begin{bmatrix}
0 & e_k\\
-e_k & 0
\end{bmatrix}
\]
for any integer $k \in \{1,\ldots,7\}$. Then $
\A
=\R \I \oplus \R s_1 \oplus \R s_2 \oplus \R s_3 \oplus \cdots \oplus \R s_9$. Moreover, all these elements are selfadjoint and satisfy
\begin{equation}
\label{eq-octonionic-spin-relations}
s_\ell \circ s_\ell=\I,
\qquad
s_\ell \circ s_m=0
\quad \text{for } \ell \not= m.
\end{equation}
Indeed, the first identity is immediate, while the second follows from the fact that the imaginary units of $\O$ anticommute and square to $-1$. Therefore $\A$ is a real spin factor of dimension $10$ (see also \cite[Lemma A.14]{GKR24}). Let
$
\cal V
\ov{\mathrm{def}}{=}
\mathbb{C} 1 \oplus \mathbb{C} u_1 \oplus \cdots \oplus \mathbb{C} u_9$ be the abstract complex spin factor of dimension $10$, where $\{1,u_1,\ldots,u_9\}$ is an orthonormal basis of the underlying Hilbert space and the conjugation is given by
\[
\overline{\lambda 1 + z_1u_1+\cdots+z_9u_9}
\ov{\mathrm{def}}{=}
\overline{\lambda}1-\overline{z_1}u_1-\cdots-\overline{z_9}u_9.
\]
Then the linear map $
J \co \cal{V} \to \A+\i \A$, $
\lambda 1 + z_1u_1+\cdots+z_9u_9
\mapsto
\lambda \I + z_1s_1+\cdots+z_9s_9$ is an isomorphism of $\JBW^*$-algebras. We equip $\A$ with the normalized trace
\[
\tau
\left(
\begin{bmatrix}
a & x\\
\overline{x} & b
\end{bmatrix}
\right)
\ov{\mathrm{def}}{=}
\frac{a+b}{2},
\]
and we still denote by $\tau$ its complex-linear extension to $\A+\i \A$. Then $\tau(\I)=1$ and $\tau(s_\ell)=0$ for any integer $\ell=1,\ldots,9$. Hence this trace coincides with the complexified canonical trace of the abstract spin factor.
\end{example}

The same argument as in Proposition \ref{prop-H2H-Lp} gives the following result.

\begin{prop}
\label{prop-H2O-Lp}
Suppose that $1 \leq p < \infty$. Let $x=\lambda \I+h$ be an element of $\A+\i \A$, where $\lambda \in \mathbb{C}$ and
$
h=t_1s_1+\cdots+t_9s_9$ with $t_1,\ldots,t_9 \in \R$. Set $
\rho
\ov{\mathrm{def}}{=}
\sqrt{t_1^2+\cdots+t_9^2}$. Then
\begin{equation}
\label{eq-H2O-Lp}
\norm{x}_{\L^p(\A+\i \A)}
=
\bigg(
\frac{|\lambda+\rho|^p+|\lambda-\rho|^p}{2}
\bigg)^{\frac{1}{p}}.
\end{equation}
\end{prop}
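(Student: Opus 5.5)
We follow the proof of Proposition \ref{prop-H2H-Lp} verbatim, with the relations \eqref{eq-octonionic-spin-relations} replacing \eqref{eq-quaternionic-spin-relations}. If $h=0$, then $x=\lambda\I$, so $x^*\circ x=|\lambda|^2\I$ and $\tau(\I)=1$. Hence $\norm{x}_{\L^p(\A+\i\A)}=|\lambda|$, which agrees with \eqref{eq-H2O-Lp} when $\rho=0$.

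Assume now $h\neq 0$ and set $e\ov{\mathrm{def}}{=} h/\rho$. Since $t_1,\ldots,t_9$ are real and each $s_\ell$ is selfadjoint, $e$ is selfadjoint. Because $\tau(s_\ell)=0$ for every $\ell$, we get $\tau(e)=0$. Bilinearity of $\circ$ together with \eqref{eq-octonionic-spin-relations} gives
\[
e\circ e=\rho^{-2}\sum_{\ell,m}t_\ell t_m\, s_\ell\circ s_m=\rho^{-2}\sum_\ell t_\ell^2\,\I=\I.
\]
Consequently the complex span of $\{\I,e\}$ is a Jordan $*$-subalgebra $\cal{M}_0$ of $\A+\i\A$, closed under $\circ$ and $*$. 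It is isomorphic to the two-dimensional algebra $\mathbb{C}e\oplus\mathbb{C}1$ of Proposition~\ref{prop-two-dimensional}, via an isomorphism that matches the traces: $\tau(\I)=1$ and $\tau(e)=0$.

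The key step is to check that the $\L^p$-norm of $x$ computed in $\A+\i\A$ equals the one computed in $\cal{M}_0$. The element $x^*\circ x=(|\lambda|^2+\rho^2)\I+2\Re(\ovl{\lambda})\rho\, e$ lies in $\cal{M}_0$. The continuous functional calculus of a selfadjoint element takes place in the closed subalgebra generated by it and $\I$, which is a commutative $\mathrm{C}^*$-algebra by \cite[Proposition 3.4.1 p.~359]{CGRP14}. That subalgebra sits inside the finite-dimensional $\cal{M}_0$, so $(x^*\circ x)^{\frac p2}\in\cal{M}_0$ and it is the same element whether computed in $\cal{M}_0$ or in $\A+\i\A$. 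The trace is the restriction of $\tau$. We then apply Proposition \ref{prop-homo-bis} to the inclusion $\cal{M}_0\hookrightarrow\A+\i\A$, which is a trace-preserving $*$-Jordan homomorphism, and to the isomorphism with the algebra of Proposition \ref{prop-two-dimensional}.

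Writing $x=\rho e+\lambda\I$, i.e.\ $a=\rho$ and $b=\lambda$ in \eqref{norm-two-dimensional}, we obtain
\[
\norm{x}_{\L^p}=\bigg(\frac{|\lambda+\rho|^p+|\lambda-\rho|^p}{2}\bigg)^{\frac1p}.
\]
The only point where octonionic nonassociativity could matter is $e\circ e=\I$, that is, the anticommutation $s_\ell\circ s_m=0$. This is already recorded in \eqref{eq-octonionic-spin-relations}, so we expect no real obstacle.
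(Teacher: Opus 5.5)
Your proposal is correct and follows essentially the same route as the paper, which proves this statement by repeating the argument of Proposition~\ref{prop-H2H-Lp}: normalize $e = h/\rho$, use \eqref{eq-octonionic-spin-relations} to get $e\circ e=\I$, and apply Proposition~\ref{prop-two-dimensional} to the two-dimensional Jordan $*$-subalgebra spanned by $\I$ and $e$. Your additional check that the functional calculus of $x^*\circ x$ stays inside that subalgebra matches the localization remark the paper makes in the proof of Proposition~\ref{prop:Lp-general-spin-factor}.
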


\section{The Albert algebra $\mathrm{H}_3(\O)$ and its complexification $\mathrm{H}_3(\O_\mathbb{C})$}
\label{sec-complex-octonions}



The aim of this section is to recall the basic structure of the Albert algebra and its complexification, and to derive an explicit formula for $\norm{x}_{\L^p(\cal{M})}
\ov{\mathrm{def}}{=} \big(\tau \big[(x^* \circ x)^{\frac{p}{2}}\big]\big)^{\frac{1}{p}}$ on \textit{selfadjoint} elements. 

\paragraph{The Albert algebra $\mathrm{H}_3(\O)$} 
Here, we use the notations introduced in Example \ref{ex-H2O-spin} for the octonion algebra. The space 
$$
\H_3(\O)
\ov{\mathrm{def}}{=} \left\{\begin{bmatrix}
   a  & \alpha & \beta  \\
   \ovl{\alpha}  & b & \gamma	\\
    \ovl{\beta} & \ovl{\gamma} &  c  \\
\end{bmatrix}: \alpha,\beta,\gamma \in \O, a,b,c \in \R \right\}
$$ 
of Hermitian $3 \times 3$ matrices with entries in the octonion algebra $\O$ equipped with the Jordan product $(x,y) \mapsto x \circ y \ov{\eqref{Jordan-product}}{=} \frac{1}{2}(xy+yx)$ is a 27-dimensional unital formally real Jordan algebra by \cite[Proposition 2.9.2 p.~69]{HOS84}. This algebra is often called the Albert algebra, since Albert proved in \cite{Alb34} that it is exceptional, that is, it is not isomorphic to a Jordan subalgebra of an associative algebra endowed with the Jordan product \eqref{Jordan-product}.

By \cite[Corollary 3.1.7 p.~77]{HOS84} and its proof, we can equip $\mathrm{H}_3(\O)$ with a norm that makes it a $\JB$-algebra. Endowed with this structure, $\mathrm{H}_3(\O)$ is a $\JBW$-factor of type $\I_3$, see \cite[pp.~90-91]{AlS03}.
%


The algebra of complex octonions, also called the algebra of bioctonions, is the complexification $\O_{\mathbb{C}} \ov{\mathrm{def}}{=} \O \ot_{\mathbb{R}} \mathbb{C}$ of the octonion algebra $\O$. Each element $x$ of $\O_{\mathbb{C}}$ can be written in the form $x = x_0 + x_1 e_1 + x_2 e_2 + x_3 e_3 + x_4 e_4 + x_5 e_5 + x_6 e_6  + x_7 e_7$, where $x_0, x_1, x_2,x_3,x_4,x_5,x_6,x_7 \in \mathbb{C}$. The standard octonionic conjugation extends $\mathbb{C}$-linearly to $\O_{\mathbb{C}}$ and is given by
\[
\ovl{x} 
\ov{\mathrm{def}}{=} x_0 - x_1 e_1 - x_2 e_2- x_3 e_3- x_4 e_4- x_5 e_5- x_6 e_6 - x_7 e_7.
\] 
The space 
$$
\H_3(\O_{\mathbb{C}})
\ov{\mathrm{def}}{=} \left\{\begin{bmatrix}
   a  & \alpha & \beta  \\
   \ovl{\alpha}  & b & \gamma	\\
    \ovl{\beta} & \ovl{\gamma} &  c  \\
\end{bmatrix}: \alpha,\beta,\gamma \in \O_{\mathbb{C}}, a,b,c \in \mathbb{C} \right\}
$$ 
of Hermitian $3 \times 3$ matrices with entries in the algebra $\O_{\mathbb{C}}$ of complex octonions equipped with the Jordan product $(x,y) \mapsto x \circ y \ov{\eqref{Jordan-product}}{=} \frac{1}{2}(xy+yx)$ is the $\JBW^*$-factor associated with the $\JBW$-factor $\H_3(\O)$ by \cite[Corollary 5.1.41 p.~15]{CGRP18}. 

By \cite[Proposition 5.25 p.~152]{AlS03}, there exists a unique normalized trace on $\H_3(\O)$, and this trace is  normal and faithful. Moreover, by \cite[p.~151]{AlS03}, it takes the value $\frac13$ on each minimal projection. Using the description \cite[Theorem 8.1 p.~28]{HKP23} \cite[Proposition A.16]{GKR24} of the minimal projections of the algebra $\H_3(\O_{\mathbb{C}})$, it is easy to check that its complex-linear extension $\tau$ to $\H_3(\O_{\mathbb{C}})$ is given by
\[
\tau\left(\begin{bmatrix}
a & \alpha & \beta\\
\overline{\alpha} & b & \gamma\\
\overline{\beta} & \overline{\gamma} & c
\end{bmatrix}\right)
=
\frac13(a+b+c).
\]

\paragraph{Spectral theory}
Let $\cal{M}$ be a finite-dimensional $\JB^*$-algebra. For our purposes, the key point is that selfadjoint elements admit a spectral decomposition into minimal projections, exactly as in the associative matrix setting. Indeed, by \cite[p.~44]{FaK94} (see also \cite[Theorem III.1.1 p.~43]{FaK94} and \cite[Proposition 2.2 p.~6]{HKP23}), every selfadjoint element $x \in \cal{M}$ admits a spectral decomposition of the form
\begin{equation}
\label{spectral-decomposition}
x
=\lambda_1(x) p_1+\dots+\lambda_n(x) p_n,
\end{equation}
where $p_1,\dots,p_n$ are mutually orthogonal minimal projections in $\cal{M}$ belonging to the unital subalgebra generated by $x$, with sum equal to $1$, and $\lambda_1(x),\dots,\lambda_n(x)$ are real numbers. 

\paragraph{The $\L^p$-norm}
Now, we describe the $\L^p$-norm for selfadjoint elements. This means that we give a formula for the norm of nonassociative $\L^p$-spaces of Iochum \cite{Ioc86} associated with the $\JBW$-algebra $\H_3(\O)$. For selfadjoint elements, the nonassociative $\L^p$-norm is simply the $\ell^p$-norm of the Jordan eigenvalues, up to the normalization of the trace. This shows that, on the selfadjoint part, the nonassociative $\L^p$-norm has exactly the expected spectral form. In particular, the exceptional case behaves in complete analogy with the classical Schatten norm on selfadjoint matrices. 

\begin{prop}
\label{prop-selfadjoint-exceptional}
Suppose that $1 \leq p < \infty$. For any selfadjoint element $x \in \H_3(\O_\mathbb{C})$, i.e.~$x \in \H_3(\O)$, we have 
\begin{equation}
\label{norm-Lp-Albert}
\norm{x}_{\L^p(\H_3(\O_\mathbb{C}))}
=\bigg(\frac{|\lambda_1(x)|^p+|\lambda_2(x)|^p+|\lambda_3(x)|^p}{3}\bigg)^{\frac1p}.
\end{equation}
\end{prop}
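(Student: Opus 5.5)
We plan to reduce the computation to the commutative subalgebra generated by $x$, exactly as in Proposition~\ref{prop:Lp-general-spin-factor}. Let $x \in \H_3(\O)$ be selfadjoint. By the spectral decomposition \eqref{spectral-decomposition}, write
\[
x=\lambda_1(x)p_1+\lambda_2(x)p_2+\lambda_3(x)p_3,
\]
where $p_1,p_2,p_3$ are mutually orthogonal minimal projections with $p_1+p_2+p_3=1$, lying in the unital subalgebra generated by $x$. Here $n=3$, since $\H_3(\O)$ is of type $\I_3$: a maximal orthogonal family of minimal projections summing to $1$ has three elements. Orthogonality means $p_i\circ p_j=0$ for $i\neq j$, and $p_i\circ p_i=p_i$.

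First, since $x=x^*$ we have $x^*\circ x=x\circ x$. Bilinearity and the orthogonality relations give
\[
x^*\circ x=\lambda_1(x)^2p_1+\lambda_2(x)^2p_2+\lambda_3(x)^2p_3 .
\]
Second, we compute $(x^*\circ x)^{\frac p2}$ by functional calculus. The real span of $p_1,p_2,p_3$ is an associative Jordan subalgebra isomorphic to $\ell^\infty_3(\R)$ via $p_i\mapsto \delta_i$, the $i$-th standard basis vector. Its complexification is the commutative $\mathrm{C}^*$-algebra containing $1$ and $x^*\circ x$, and the continuous functional calculus for $x^*\circ x$ takes place inside it, as in the proof of Proposition~\ref{prop-homo-bis} via \cite[Proposition 3.4.1 p.~359]{CGRP14}. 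Hence
\[
(x^*\circ x)^{\frac p2}=\sum_{i=1}^3 |\lambda_i(x)|^p p_i .
\]

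Third, we apply the trace. By \cite[p.~151]{AlS03}, as recalled above, $\tau(p_i)=\frac13$ for every minimal projection. By linearity,
\[
\tau\big[(x^*\circ x)^{\frac p2}\big]=\frac{|\lambda_1(x)|^p+|\lambda_2(x)|^p+|\lambda_3(x)|^p}{3}.
\]
Taking $p$-th roots gives \eqref{norm-Lp-Albert}.

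The only delicate step is the second one: identifying the Jordan functional calculus power $(x^*\circ x)^{p/2}$ with the coordinatewise power in the basis $(p_i)$. I would justify it by noting that the map $\ell^\infty_3(\mathbb{C})\to\H_3(\O_\mathbb{C})$, $(\alpha_i)\mapsto\sum\alpha_ip_i$, is an injective unital $*$-Jordan homomorphism. Functional calculus commutes with such homomorphisms (the argument of \eqref{inter-456}), and in $\ell^\infty_3$ the power is computed coordinatewise. It is also worth remarking that the eigenvalues $\lambda_i(x)$ may repeat; the decomposition into three minimal projections still exists by \cite[Theorem III.1.1 p.~43]{FaK94}, so no case distinction is needed.
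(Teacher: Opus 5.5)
Your proof is correct and follows the paper's argument step for step: spectral decomposition into three orthogonal minimal projections, $x^*\circ x=\sum_i\lambda_i(x)^2p_i$, functional calculus, and $\tau(p_i)=\frac13$. Your justification of the functional-calculus step is slightly more careful than the paper's appeal to the subalgebra generated by $x$. You route it through the unital $*$-Jordan embedding of $\ell^\infty_3(\mathbb{C})$ onto the span of the $p_i$, and this stays valid when eigenvalues repeat. In that case the $p_i$ need not all lie in the subalgebra generated by $x$, so you should drop that claim from your first paragraph.
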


\begin{proof}
Since $x$ is selfadjoint, it admits a spectral decomposition
\[
x=\lambda_1(x)p_1+\lambda_2(x)p_2+\lambda_3(x)p_3,
\]
where $p_1,p_2,p_3$ are mutually orthogonal minimal projections in the algebra $\H_3(\O_\mathbb{C})$ with $p_1+p_2+p_3=1$. Using the orthogonality relations $p_i \circ p_j=0$ for $i \neq j$ and $p_i \circ p_i=p_i$, we obtain
\[
x^* \circ x 
= x \circ x
=\bigg(\sum_{i=1}^3 \lambda_i(x)p_i\bigg) \circ \bigg(\sum_{j=1}^3 \lambda_j(x)p_j\bigg)
=\sum_{i,j=1}^3 \lambda_i(x)\lambda_j(x) p_i \circ p_j
=\sum_{i=1}^3 \lambda_i(x)^2 p_i.
\]
By the Jordan functional calculus in the commutative unital Jordan subalgebra generated by $x$ and $1$, we get
\begin{equation}
\label{inter-456-bis}
(x^* \circ x)^{\frac p2}
=|\lambda_1(x)|^p p_1 + |\lambda_2(x)|^p p_2 + |\lambda_3(x)|^p p_3.
\end{equation}
Now, since $\tau(p_i)=\frac13$ we obtain
\begin{align}
\MoveEqLeft
\label{inter-AAZDF}
\tau ((x^* \circ x)^{\frac{p}{2}})
\ov{\eqref{inter-456-bis}}{=} \tau\big(|\lambda_1(x)|^p p_1+|\lambda_2(x)|^p p_2+|\lambda_3(x)|^p p_3 \big) \\
&=|\lambda_1(x)|^p \tau(p_1)+|\lambda_2(x)|^p \tau(p_2)+|\lambda_3(x)|^p \tau(p_3) 
=\frac{|\lambda_1(x)|^p+|\lambda_2(x)|^p+|\lambda_3(x)|^p}{3}.   \nonumber      
\end{align}
Consequently, we obtain
\[
\norm{x}_{\L^p(\cal{M})}
\ov{\eqref{norm-LpNA}}{=}
\big(\tau\big[(x^* \circ x)^{\frac{p}{2}}\big]\big)^{\frac{1}{p}}
\ov{\eqref{inter-AAZDF}}{=}
\bigg(\frac{|\lambda_1(x)|^p+|\lambda_2(x)|^p+|\lambda_3(x)|^p}{3}\bigg)^{\frac1p}.
\]
\end{proof}

\begin{remark} \normalfont
A similar formula holds for the $\JBW$-factor $\H_n(\mathbb{H})$, where $\mathbb{H}$ is the division algebra of quaternions. See \cite[Section A.1]{GKR24} for more information on this factor.
\end{remark}

\begin{remark} \normalfont
\label{remark-JBW-triple-bis}
As we said in Remark \ref{remark-JBW-triple}, each $\JBW^*$-algebra $(\cal{M},*,\circ)$ has a canonical structure of $\JBW^*$-triple. The canonical $\JBW^*$-triple associated with $\H_3(\O_{\mathbb{C}})$ is the so-called Cartan factor of type $\VI$. 
\end{remark}

\begin{remark} \normalfont
If we use instead of $\tau$ the trace $\tr$ satisfying $\tr(1)=3$, then
\[
\norm{x}_{\L^p(\cal{M})}
=\bigg(\sum_{i=1}^3 |\lambda_i(x)|^p\bigg)^{\frac1p}.
\]
\end{remark}

\begin{remark} \normalfont
\label{Rodrigues}
Suppose that $1 \leq p < \infty$. Consider an element $x \in \H_3(\O_\mathbb{C})$. The quantity $\big(\sum_{i=1}^3 s_i(x)^p\big)^{\frac{1}{p}}$ appears in \cite[Theorem 4.8]{FeP20}, where $s_1(x),s_2(x),s_3(x)$ are the singular numbers of $x$. The precise relation between this quantity and $\norm{x}_{\L^p(\H_3(\O_\mathbb{C}))}$ and $\norm{x}_{\L^{p,A}(\H_3(\O_\mathbb{C}))}$ is not clear.
\end{remark}


\normalsize



%
%
%

\section{Generalized Probabilistic Theories with $\JBW^*$-algebras}
\label{sec-Generalized}


%
%
%
%
%
%
%
%
%
%
%
%


In this section, we explain why $\JBW$-algebras, and equivalently their associated $\JBW^*$-algebras, provide a natural framework for a large class of generalized probabilistic theories (GPTs). The basic point is that a generalized probabilistic theory is usually formulated in terms of a suitable ordered vector space, while the predual of a $\JBW$-algebra comes with exactly such an order structure, together with a rich spectral theory. In finite dimension, this viewpoint appears in \cite{SAH25}: Euclidean Jordan algebras are among the standard Jordan-algebraic models in GPT theory. Our purpose here is to stress that the $\JBW$ setting is the right von Neumann-type extension of this picture, and that the nonassociative $\L^p$-spaces considered in the present paper fit naturally into it.

We begin by recalling the formalism of generalized probabilistic theories, following the standard references~\cite{DL70}, \cite[Chapter I]{Lam18}, \cite{Lud83}, \cite{Lud85}, \cite{Pla23}, and \cite{Wil25}. Generalized probabilistic theories may be viewed as an axiomatic extension of the probabilistic formalism of quantum mechanics, intended to capture in a unified way the statistical features of physical systems and measurements. Although a substantial part of the literature focuses on the technically more elementary finite-dimensional setting, our aim here is to describe the GPT framework at the level of generality relevant for the present paper. The universality of this formalism is supported by by Ludwig's theorem, stated in \cite[Theorem 1.43, p.~47]{Lam18}, \cite[Theorem 3.1]{Lud83}, and \cite[Theorem 3.7]{Lud85} (see also \cite[p.~249-250]{Cho19} for a variant), which shows that every physical theory satisfying a small number of basic assumptions can be represented as a generalized probabilistic theory. Here, a physical theory is understood as a rule assigning to each state $\omega$ of a fixed physical system and to each measurement event (or effect) $e$ a number $\mu(e,\omega) \in [0,1]$, interpreted as the probability of observing the event $e$ when the system is prepared in the state $\omega$.


\paragraph{Cones and base norm spaces}
We begin by recalling some basic definitions concerning cones and base norm spaces. Consider a real vector space $V$. 
Following \cite[Definition 1.2 p.~2]{AlT07}, we say that a non-empty subset $C$ of $V$ is a cone if	$C+C \subset C$, $\lambda C \subset C$ for all $\lambda \geq 0$ and $C \cap (-C) = \{0\}$. Clearly, every cone is automatically a convex subset. The cone $C$ is said to be generating (or spanning) \cite[Definition 1.5 p.~4]{AlT07} if $V = C - C$, i.e.~if the vector subspace generated by $C$ coincides with the vector space $V$.

Let $C$ be a cone in a real vector space $V$. A linear functional $u \co V \to \R$ is said to be $C$-strictly positive \cite[Definition 1.46 p.~32]{AlT07} whenever $x \in C \backslash \{0\}$ implies $u(x) > 0$. Following \cite[Theorem 1.46 p.~39]{AlT07} (see also \cite[p.~3]{AlS01} and \cite[Definition 1.32 p.~38]{Lam18}), a non-empty convex subset $\cal{B}$ of $C - \{0\}$ is said to be a base for the cone $C$ if for each $x \in C \backslash \{0\}$ there exist $\lambda > 0$ and $b \in \cal{B}$ both uniquely determined such that $x=\lambda b$. According to \cite[Theorem 1.47 p.~40]{AlT07}, a cone $C$ of a vector space $V$ admits a base if and only if $V$ admits a $C$-strictly positive linear functional. 
Indeed, by \cite[Exercise 2 p.~42]{AlT07} (see also \cite[Lemma 1.33 p.~38]{Lam18}), a subset $\cal{B}$ of a cone $C$ of a vector space $V$ is a base for the cone $C$ if and only if there exists a $C$-strictly positive linear functional $u \co V \to \R$ such that 
\begin{equation}
\label{base-eq}
\cal{B}
= \{ x \in C : \la u,x \ra = 1 \}.
\end{equation}
A cone $C$ induces, by \cite[p.~3]{AlT07}, a partial order on the vector space $V$ by setting $x \leq y$ if and only if $y - x \in C$. In this case, $(V,C)$, or simply $V$, is called an ordered real vector space and we say that $C$ is the positive cone.

Now, we recall the notion of Minkowski functional. Let $V$ be a real vector space and let $S$ be a subset of $V$. Then the Minkowski functional of $S$ is the function $\norm{\cdot}_S \co V \to \R_+ \cup \{\infty\}$ defined by
\[
\norm{x}_S 
\ov{\mathrm{def}}{=} \inf\{ r > 0 : x \in rS \},  \quad x \in V.
\]

Recall that a subset $S$ of the space $V$ is said to be absolutely convex if $0 \in S$ and $\lambda x + \mu y \in S$ for all $x,y \in S$ and $\lambda,\mu \in \R$ with $|\lambda| + |\mu| \leq 1$ and absorbent if for all $x \in V$ there exists $r > 0$ such that $x \in rS$. If $S$ is an absorbent absolutely convex subset, then by \cite[p.~11]{Fur16} the Minkowski functional $\norm{\cdot}_S$ is a seminorm. Moreover, the result \cite[Lemma 0.1.5 p.~11]{Fur16} says that $\norm{\cdot}_S$ is a norm if and only if $S$ is radially bounded, i.e.~contains no line through the origin. 

Following \cite[Definition 7.2.8 p.~231]{Cho19}, a semi-base-norm space $(V,C,u)$ consists of an ordered real vector space $(V,C)$ such that $C$ is generating with a specified $C$-strictly positive linear functional $u \co V \to \R$. In this case, $\{x \in C : u(x) = 1\}$ is a base of the cone $C$, denoted by $\cal{B}$.

By \cite[Lemma 7.2.22 p.~238]{Cho19}, the absolutely convex hull of the base $\cal{B}$, that is, the smallest absolutely convex set containing $\cal{B}$, is given by $\absco(\cal{B}) = \{0\}$ if $V =  \{0\}$ and 
$$
\absco(\cal{B}) 
= \{x-y : x, y \in V_+ 
\quad \text{and} \quad u(x) + u(y) = 1\}
$$
if $V \not=  \{0\}$. If $S \ov{\mathrm{def}}{=} \absco(\cal{B})$ is absorbent, the Minkowski functional $\norm{\cdot}_S$ is a seminorm on the vector space $V$, which we call the base seminorm on $V$. We will use this terminology even if $S$ is not absorbent.

The base seminorm is not necessarily a norm, which motivates the following definition. A pre-base-norm space \cite[Definition 7.2.23 p.~238]{Cho19} \cite[p.~88]{Fur16} is a semi-base-norm space $(V,C,u)$ such that $\absco(\cal{B})$ is radially bounded. By \cite[Lemma 2.2.3 p.~890]{Fur16}, this implies that $\absco(\cal{B})$ is absorbent and consequently it is equivalent to say that the base seminorm is a norm. Then the base seminorm $\norm{\cdot}_S$ is called the base norm on $V$. 

A Banach base-norm space \cite[Definition 7.2.23 p.~238]{Cho19} is a pre-base-norm space $(V,C,u)$ such that $V$ is complete with respect to the base-norm and where the positive cone $C$ is closed with respect to the base norm. We refer to \cite{Alf71}, \cite{Cho19} and \cite{Fur16} for more information on base norm spaces. We warn that several definitions coexist in the literature. In this case, it is worth noting that by \cite[Theorem 7.2.40 p.~244]{Cho19} the topological dual $V^*$ admits a canonical structure of order unit space (we refer to \cite[Definition 7.2.17 p.~235]{Cho19} for the definition). In particular, $V^*$ is an ordered real vector space by the dual cone $C' \ov{\mathrm{def}}{=} \{w \in V^* : \la w,x \ra \geq 0 \text{ for all } x \in C \}$. Finally, note that $u$ is continuous by \cite[Corollary 2.33 p.~83]{AlT07}. 

\paragraph{Generalized probabilistic theories}
In this paper, a generalized probabilistic theory is a Banach base-norm space $(V,C,u)$. This concept was introduced in \cite[p.~242]{DL70} in a form that is essentially equivalent and under the name state space. We also refer to \cite[p.~72]{Lam18} for the finite-dimensional case. The base defined in \eqref{base-eq} is called the state space of the theory and is denoted by
\[
\Omega 
\ov{\mathrm{def}}{=} \{ \omega \in C : \la u,\omega \ra = 1 \}.
\]
The element $u$ is called the unit effect. It can be interpreted as a measurement with a single outcome, which occurs with certainty for every state. An effect is a functional $e \in V^*$ such that $0 \leq e \leq u$. Given a state $\omega \in \Omega$ and an effect $e$, the scalar $\la e,\omega \ra \in [0,1]$ can be seen as the probability of the outcome corresponding to $e$ when performing the associated measurement on the system.


Following \cite[p.~37]{AlS03}, the set $\cal{K}$ of normal states on a $\JBW$-algebra $\A$ is called the normal state space of $\A$. We have the following result, which generalizes \cite[Theorem 2.3.1 p.~106]{Fur16} which treats the particular case of selfadjoint parts of von Neumann algebras. Here $\A_*$ denotes the predual of the $\JBW$-algebra $\A$. We equally use the contractive linear form $u \co \A_* \to \R$, $\phi \mapsto \phi(1)$.

\begin{prop}
\label{prop-Jordan-GPT}
Let $\A$ be a $\JBW$-algebra. Then $(\A_*,\A_{*+},u)$ defines a generalized probabilistic theory. Moreover, its state space $
\Omega$ is exactly the normal state space $\cal{K}$ of $\A$.
\end{prop}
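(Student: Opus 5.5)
We must check the definition of a Banach base-norm space for $(\A_*,\A_{*+},u)$. The predual $\A_*$ is a real Banach space (the norm predual of $\A$), and $\A_{*+}$ is the set of normal positive functionals. The plan is to verify the axioms in turn: cone, generating, strict positivity of $u$, identification of the base norm with the predual norm, and closedness of the cone. Completeness then comes for free, since $\A_*$ with its dual-space norm is Banach.

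\textbf{Cone and base.} The set $\A_{*+}$ is clearly stable under sums and nonnegative scalars. If $\phi$ and $-\phi$ are both positive, then $\phi$ vanishes on $\A_+$. Every element of $\A$ is a difference of positive elements (for instance $x = \norm{x}1 - (\norm{x}1-x)$), so $\phi=0$. For generation, I will invoke the Jordan decomposition of normal functionals on $\JBW$-algebras \cite[Theorem 2.41 / Jordan decomposition]{AlS03}: every $\phi \in \A_*$ can be written $\phi=\phi_+-\phi_-$ with $\phi_\pm \in \A_{*+}$ and $\norm{\phi}=\norm{\phi_+}+\norm{\phi_-}$. For strict positivity, recall that for $\phi\ge0$ on a unital $\JB$-algebra we have $\norm{\phi}=\phi(1)=u(\phi)$. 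Hence $u(\phi)>0$ whenever $\phi\in\A_{*+}\setminus\{0\}$. The base $\{\phi\in\A_{*+}:\phi(1)=1\}$ is then by definition the normal state space $\cal{K}$, which gives the last assertion.

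\textbf{Base norm equals predual norm.} This is the main step. Let $S=\absco(\cal{K})$, described via \cite[Lemma 7.2.22 p.~238]{Cho19} as $\{\phi_1-\phi_2 : \phi_i\ge0,\ u(\phi_1)+u(\phi_2)=1\}$. If $\phi\in rS$, then $\norm{\phi}\le r(\phi_1(1)+\phi_2(1))=r$, so the predual norm is at most the base norm. Conversely, the Jordan decomposition gives $\phi = \norm{\phi}\big(\phi_+/\norm{\phi}-\phi_-/\norm{\phi}\big)$ with $u(\phi_+)+u(\phi_-)=\norm{\phi}$, so $\phi\in\norm{\phi}S$ and the base norm is at most $\norm{\phi}$. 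The two norms therefore coincide. Consequently $S$ is absorbent and radially bounded (a line through the origin inside $S$ would contradict the bound $\norm{\cdot}\le 1$ on $S$). The triple is thus a pre-base-norm space whose norm is complete.

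\textbf{Closedness of the cone.} $\A_{*+}=\bigcap_{a\in\A_+}\{\phi:\phi(a)\ge0\}$ is an intersection of norm-closed half-spaces, each defined by the continuous functional $\phi\mapsto\phi(a)$, so it is closed.

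The only nontrivial input is the norm-additive Jordan decomposition in the $\JBW$ setting; if a precise reference is troublesome, I would derive it from the von Neumann case via the $\JBW^*$ complexification, or from the Hahn–Jordan type decomposition in \cite{AlS03}.
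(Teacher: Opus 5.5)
Your argument is correct and is essentially the paper's proof written out in full. The paper cites \cite[Corollary 2.60 p.~67]{AlS03} for the fact that $\A_*$ is a base-norm space with base $\cal{K}$, which is exactly what your norm-additive Jordan decomposition gives, including the equality of the base norm and the predual norm. It then notes completeness and closedness of $\A_{*+}$ and invokes \cite[Proposition 2.2.20 p.~103]{Fur16} to pass to the Banach base-norm axioms that you check by hand.

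One caveat: drop your fallback of deriving the Jordan decomposition from the von Neumann case via complexification. The $\JBW^*$-complexification of a $\JBW$-algebra need not be, or even embed into, a von Neumann algebra (e.g.\ $\H_3(\O_{\mathbb{C}})$), so the decomposition of normal functionals has to come from the $\JBW$-algebra theory in \cite{AlS03} itself.
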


\begin{proof}
According \cite[Corollary 2.60 p.~67]{AlS03}, $\A_*$ is a base-norm space in the Alfsen-Shultz sense. Moreover, $\A_*$ is complete and its cone $\A_{*+}$ is closed. Using \cite[Proposition 2.2.20 p.~103]{Fur16}, we conclude that $(\A_*,\A_{*+},u)$ is a Banach base-norm space.
\end{proof}

\paragraph{Entropy}
Now, suppose that the $\JBW$-algebra $\A$ admits a normal finite faithful trace $\tau$.  Following \cite[p.~137]{Ioc84}, we define $\L^1(\A)$ as the completion of $\A$ for the norm $\norm{\cdot}_{\L^1(\cal{M})}$ defined in \eqref{norm-LpNA}. We also consider the closure $\L^1(\A)_+$ of $\A_+$. According to \cite[Theorem 2.4 p.~479]{AyA85} \cite[Theorem V.2.2 p.~138]{Ioc84}, for any positive normal linear functional $\phi$ on $\A$ there exists an element $h \in \L^1(\A)_+$ such that $\phi(x)=\tau(h \circ x)$ for any $x \in \A$. Conversely, any element $h$ from $\L^1(\A)_+$ defines a positive normal linear functional $\phi$ on $\A$ by $\phi(x)=\tau(h \circ x)$. We say that $h$ is the density of the state $\phi$, and we denote it by $d_\phi$. 

Using a concrete realization of the space $\L^1(\A)$ described in \cite{Ayu84}, together with an appropriate spectral decomposition, one could define the entropy of an arbitrary element of $\L^1(\A)_+$ with trace 1. However, this lies beyond the scope of the present paper. Here, we restrict ourselves to defining the entropy for elements of $\A_+$ with trace 1 using the 
spectral theory of \cite[Section 4]{ASS78} and \cite[Section 3.2]{HOS84}, and leave the general case for future investigations. We need to consider the function $g \co \R^+ \to \R$ defined by $g(0) \ov{\mathrm{def}}{=} 0$ and $g(t) \ov{\mathrm{def}}{=} t\log_2(t)$ if $t > 0$. By a slight abuse, if $x \in \A_+$ we use the notation $x\log_2 x \ov{\mathrm{def}}{=} g(x)$.
\begin{defi}
\label{def-entropy}
Let $\A$ be a $\JBW$-algebra equipped with a normal finite faithful trace $\tau$. For any element $x \in \A_+$ with $\tau(x)=1$, we define the entropy
$$
\H_\tau(x)
\ov{\mathrm{def}}{=} -\tau (x \log_2x).
$$
The entropy of a normal state $\phi$ is the entropy $\H_\tau(d_\phi)$ of its density $d_\phi$.
\end{defi}

\section{Open questions}
\label{sec-open-question}

We finish with some open questions.

\begin{quest}
\label{octonions}
Let $\Omega$ be a (localizable) measure space. Is it true that \eqref{norm-LpNA} defines a norm in the case $\cal{M}=\L^\infty(\Omega,\H_3(\O_{\mathbb{C}}))$?
\end{quest}

This question is open in the particular case $\cal{M}=\H_3(\O_{\mathbb{C}})$. Following Remark \ref{Rodrigues}, the following question is natural. Here $s_1(x),s_2(x),s_3(x)$ are the singular numbers of the element $x$.

\begin{quest}
\label{quest-octonions-link}
Suppose that $1 \leq p < \infty$. Consider some element $x \in \H_3(\O_\mathbb{C})$. What is the relation between $\big(\sum_{i=1}^3 s_i(x)^p\big)^{\frac{1}{p}}$ and $\norm{x}_{\L^p(\H_3(\O_\mathbb{C}))}$ and $\norm{x}_{\L^{p,A}(\H_3(\O_\mathbb{C}))}$?
\end{quest}


\paragraph{Competing interests} The authors declare that they have no competing interests.

\paragraph{Data availability} No data sets were generated during this study.

\paragraph{Acknowledgment} 
Part of this work was carried out during a visit of the first author to Nankai University (China) in 2025, supported by the National Natural Science Foundation of China (Grant No.~12571143). The first author gratefully acknowledges the hospitality of Nankai University.




{\footnotesize

\vspace{0.2cm}

\noindent C\'edric Arhancet\\ 
\noindent 6 rue Didier Daurat, 81000 Albi, France\\
URL: \href{http://sites.google.com/site/cedricarhancet}{https://sites.google.com/site/cedricarhancet}\\
cedric.arhancet@protonmail.com\\
ORCID: 0000-0002-5179-6972 

\vspace{0.2cm}

\noindent Lei Li \\
\noindent School of Mathematical Sciences and LPMC \\
Nankai University, 300071 Tianjin, China \\
leilee@nankai.edu.cn

}

\end{document}